\newtheorem{thm}{Theorem}[section]
\newtheorem{cor}[thm]{Corollary}
\newtheorem{lem}[thm]{Lemma}
\newtheorem{prop}[thm]{Proposition}
\theoremstyle{definition}
\newtheorem{dfn}[thm]{Definition}
\newtheorem{rem}[thm]{Remark}
\numberwithin{equation}{section}
\newcommand{\Rd}{\mathbb{R}^d}
\newcommand{\dhh}{\partial_h}
\newcommand{\dt}{\partial_t}
\begin{document}

\baselineskip=17pt


\title{Minimizing movements for quasilinear\\ Keller--Segel systems with nonlinear mobility\\ in weighted Wasserstein metrics}

\author{Kyogo Murai}

\date{2026}

\maketitle

\renewcommand{\thefootnote}{}



\renewcommand{\thefootnote}{\arabic{footnote}}
\setcounter{footnote}{0}


\begin{abstract}
We prove the global existence of weak solutions to quasilinear Keller--Segel systems 
with nonlinear mobility by minimizing movements (JKO scheme)
in the product space of the weighted Wasserstein space and $L^2$ space.
In particular, we newly show the global existence of weak solutions 
to the Keller--Segel system with the degenerate diffusion and the sub-linear sensitivity in the critical case.
The advantage of our approach is that we can connect the global existence of weak solutions 
to the Keller--Segel systems with the boundedness from below of a suitable functional.
While minimizing movements for Keller--Segel systems with linear mobility are adapted 
in the product space of the Wasserstein space and $L^2$ space,
due to the nonlinearity of mobility,
we need to use the weighted Wasserstein space instead of the Wasserstein space.
Moreover, since the mobility function is not Lipschitz,
we first find solutions to the Keller--Segel systems 
whose mobility is approximated by a Lipschitz function,
and then we establish additional uniform estimates and convergences
to derive solutions to the Keller--Segel systems.
\end{abstract}


\section{Introduction}

\quad We consider the following parabolic system:
\begin{align}\label{peks}
    \begin{cases}
      \partial_t u = \Delta u^p - \nabla\cdot(\chi u^\alpha \nabla v)
       &\mathrm{in}\ \Omega\times(0,\infty),\\
      \dt v = \Delta v - v + u 
       &\mathrm{in}\ \Omega\times(0,\infty),\\
      \nabla u\cdot \boldsymbol{n} = \nabla v\cdot \boldsymbol{n} = 0
       &\mathrm{on}\ \partial\Omega\times(0,\infty),\\
      u(\cdot,0) = u_0(\cdot),\ v(\cdot,0) = v_0(\cdot)
       &\mathrm{in}\ \Omega,
    \end{cases}
\end{align}
where 
$p\geq1,\ 0<\alpha<1,\ \chi>0,\ d\geq2$, $\boldsymbol{n}$ is the outer unit normal vector to $\partial\Omega$ 
and $\Omega$ is a bounded convex domain in $\Rd$ with smooth boundary.
In addition, 
$u_0 \in L^{p+1-\alpha}\cap\mathcal{P}(\Omega)$ is a nonnegative function
and $v_0 \in H^1(\Omega)$ is also a nonnegative function, where $\mathcal{P}(\Omega)$ is the set of Borel probability measures on $\Omega$.

\quad The Keller--Segel system is a model to describe an aggregation phenomenon of cellular slime molds with chemotaxis,
where we denote by $u$ the cell density and by $v$ the concentration of the chemical attractant.
We thus consider nonnegative solutions to \eqref{peks}.
In order to take into account a volume-filling effect, 
the exponent $\alpha \in (0,1)$ is introduced,
where the volume-filling effect is the phenomenon 
that the movement of cells is restricted by the presence of other cells.
There are various mathematical analyses about \eqref{peks} (see \cite{PH,W,TW,ISY,SW,Wi}).

\quad The purpose in this paper is to find global weak solutions to \eqref{peks} 
by regarding the system \eqref{peks} as a gradient flow of a suitable functional 
in a suitable metric space.
In more detail, we use the time discrete variational method called minimizing movements (JKO scheme \cite{JKO}).
When $\alpha=1$, the system \eqref{peks} can be seen as a gradient flow of the energy functional
\begin{align}\label{eq120}
   \tilde{E}(u,v) \coloneq 
   \frac{1}{\chi(p-1)}\int_{\Omega}u^p\, dx 
    - \int_{\Omega} uv\, dx
    + \frac{1}{2}\int_{\Omega} \left(|\nabla v|^2 + v^2\right)\, dx
\end{align}
in the product space of the Wasserstein space and $L^2$ space (see \cite{BL,BCKKLL,Mi,M}).
Here, Wassrestein space is the metric space of Borel probability measures with finite second moment $\mathcal{P}_2(\Omega)$
endowed with the Wasserstein distance
\begin{align}\label{eq81}
   \mathcal{W}_2(\mu_0,\mu_1)^2
   \coloneq \inf_{\gamma\in\Gamma(\mu_0,\mu_1)}\int_{\Omega\times\Omega} |x-y|^2\, d\gamma(x,y)\quad
   \mathrm{for}\ \mu_0, \mu_1 \in \mathcal{P}_2(\Omega),
\end{align}
where $\Gamma(\mu_0,\mu_1)$ is the set of $\gamma \in \mathcal{P}(\Omega\times\Omega)$ satisfying
\begin{align*}
   \gamma(A\times\Omega) = \mu_0(A), \
   \gamma(\Omega\times A) = \mu_1(A) \ \mathrm{for\ all\ Borel\ set}\ A \subset \Omega.
\end{align*}
In \cite{BL}, Blanchet and Lauren\c{c}ot showed the global exsitence of weak solutions to the
Keller--Segel system with $\alpha = 1, p=2-2/d$ and small initial data in $\Omega = \Rd\ (d\geq3)$ by minimizing movements.
In \cite{Mi} and \cite{M}, Mimura proved the global existence of weak solutions to the Keller--Segel system 
with $\alpha = 1$ and $p \geq 2-2/d$, adding the assumption of small initial data if $p=2-2/d$,
in bounded smooth domain $\Omega\subset\Rd\ (d\geq3)$ by minimizing movements. 
 
However, since the mobility function $u^\alpha\ (0<\alpha<1)$ is nonlinear, 
the system \eqref{peks} cannot be seen as a gradient flow of a corresponding energy functional
in the product space of the Wasserstein space and $L^2$ space.
We thus change the Wasserstein space to the weighted Wasserstein space (see \cite{DNS} and Section 2.2), 
which is the extension of the Wasserstein space in some sense.
Then we will see the system \eqref{peks} as a gradient flow of the energy functional
\begin{equation*}
   E(u,v) \coloneq 
   \frac{p}{\chi(p-\alpha)(p+1-\alpha)}\int_{\Omega}u^{p+1-\alpha}\, dx 
    - \int_{\Omega} uv\, dx
    + \frac{1}{2}\int_{\Omega} \left(|\nabla v|^2 + v^2\right)\, dx
\end{equation*}
in the product space of the weighted Wasserstein space and $L^2$ space.

\quad Minimizing movements with the Wasserstein space have been studied a lot,
thus the methods of a improved regularity of minimizers and convergences to weak formulation
are established (for instance \cite{BL,BCKKLL,MMS,CDFLS,CMV,JKO,Mi,M}).
On the other hand, minimizing movements with the weighted Wasserstein space 
are used in only a few papers (\cite{LMS,Z,Zi}).
In \cite{LMS} and \cite{Z}, they deal with some fourth-order partial differential equations 
like the Cahn--Hilliard type equation and mainly with the Lipschitz mobility.
While the mobility function $u^\alpha$ in \eqref{peks} is not Lipschitz,
we show that minimizing movements with the weighted Wasserstein space can be also used for
the type of the equations \eqref{peks}.
Set $X \coloneq (L^{p+1-\alpha}\cap\mathcal{P}(\Omega))\times H^1(\Omega)$.

\begin{thm}\label{thm1.1}
   Let $d\geq2,\ \alpha \in (0,1),\ 1+\alpha-2/d < p \leq 1+\alpha,\ \chi>0$ 
   and $(u_0,v_0) \in X$ be a pair of nonnegative functions.
   Then for all $T>0$, there exists a nonnegative weak solution $(u,v)$ to \eqref{peks}
   on the time interval $[0,T]$ satisfying
   \begin{align*}
      &\bullet u \in L^\infty((0,T);L^{p+1-\alpha}(\Omega)),\ u^{\frac{p+1-\alpha}{2}} \in L^2((0,T);H^1(\Omega)),\\
      &\bullet \|u(t)\|_{L^1(\Omega)} = 1\quad \mathrm{for}\ t \in [0,T],\\
      &\bullet v \in L^\infty((0,T);H^1(\Omega))\cap L^2((0,T);H^2(\Omega))\cap C^{\frac{1}{2}}([0,T];L^2(\Omega)),\\
      &\bullet \lim_{t\to0}W_m(u(t),u_0) = 0\ \mathrm{and}\ \lim_{t\to0}\|v(t) - v_0\|_{L^2(\Omega)} = 0,
   \end{align*}
   where $W_m$ is the weighted Wasserstein distance (see Definition \ref{dfn2.4})
   and $m(u) = u^\alpha$, moreover
   \begin{align*}
      &\int_0^T\int_{\Omega} (\nabla u^p - \chi u^\alpha\nabla v)\cdot\nabla\varphi\, dx\,dt 
       = \int_{\Omega} (u_0 - u(\cdot,T))\varphi\, dx,\\
      &\int_0^T\int_{\Omega} \left[\nabla v\cdot\nabla\zeta + v\zeta - u\zeta\right]\, dx\, dt
       = \int_{\Omega} (v_0 - v(\cdot,T))\zeta\, dx,
   \end{align*}
   for all $\varphi \in C^\infty(\overline{\Omega})$ 
   with $\nabla \varphi\cdot\boldsymbol{n} = 0$ on $\partial\Omega$
   and $\zeta \in H^1(\Omega)$.
\end{thm}

\begin{rem}
   In Theorem \ref{thm1.1}, we have the condition
   $$p > 1 + \alpha - \frac{2}{d}.$$
   We can see this exponent in the point of view of scalling.
   Let $(u,v)$ satisfies the simplified system:
   \begin{align}\label{eq90}
      \begin{cases}
         \dt u = \Delta u^p - \nabla\cdot(u^\alpha\nabla v) &\mathrm{in}\ \Rd\times(0,\infty),\\
         \dt v = \Delta v + u &\mathrm{in}\ \Rd\times(0,\infty),
      \end{cases}
   \end{align}
   then $(u_\lambda,v_\lambda)$, 
   where $u_\lambda(x,t) = \lambda^{\frac{2}{1+\alpha-p}}u(\lambda x,\lambda^2t)$ and
   $v_\lambda = \lambda^{\frac{2(p-\alpha)}{1+\alpha-p}}v(\lambda x,\lambda^2t)$ 
   for $(x,t)\in\Rd\times(0,\infty),\ \lambda>0$,
   also satisfies \eqref{eq90}. 
   By the first equation of \eqref{eq90}, 
   the $L^1$ norm of $u_\lambda$ is preserved for $t\in(0,\infty)$.
   Thus we focus on the $L^1$ norm, then by the change of variables, we have
   \begin{align*}
      \|u_\lambda(\cdot,t)\|_{L^1(\Rd)}
      &= \int_{\Rd} \lambda^{\frac{2}{1+\alpha-p}}u(\lambda x,\lambda^2t)\, dx\\
      &= \int_{\Rd} \lambda^{\frac{2}{1+\alpha-p}-d}u(x,\lambda^2t)\, dx\\
      &= \lambda^{\frac{2}{1+\alpha-p}-d}\|u(\cdot,\lambda^2t)\|_{L^1(\Rd)}
      \quad \mathrm{for\ all}\ \lambda>0,\ t>0.
   \end{align*}
   Hence the exponent that the $L^1$ norm is invariant by scalling is 
   \begin{align*}
      \frac{2}{1+\alpha-p} - d = 0
      \Leftrightarrow
      p=1+\alpha-\frac{2}{d}.
   \end{align*}
   On the other hand, 
   when $\alpha=1$, the case $p>2-2/d$ is called sub-critical case
   and it is known that the Keller--Segel system has global weak solutions in that case,
   which is proved by various ways including minimizing movements (\cite{IY,BL,Mi,M}).
   In particular, the proof by minimizing movements (\cite{BL,Mi,M}) implies that
   the global existence of weak solutions to the Keller--Segel system with $\alpha=1$
   is related to the boundedness from below of the energy functional $\tilde{E}$ in \eqref{eq120}.
   When $0<\alpha<1$, 
   it is shown that system \eqref{peks} has global weak solutions if $p>1+\alpha-2/d$ in \cite{TW} and \cite{ISY},
   where minimizing movements are not used.
   Moreover if $p\geq1$ and $\alpha\geq1$ satisfy the condition $p < 1 + \alpha - 2/d$,
   there exists a finite time blow-up solution of the Keller--Segel system (\cite{HIY}).
   Hence from these facts, 
   we may derive the proper condition for global existence of weak solutions to \eqref{peks} by minimizing movements.
\end{rem}

\quad In the critical case $p=1+\alpha-2/d$, 
it is known that the Keller--Segel system with the non-degenerate diffusion for $0<\alpha<1$
has a global solution by assuming small initial data (see Remark \ref{rem1.4}).
However, it is open that the Keller--Segel system with the degenerate diffusion and the sub-linear sensitivity 
such as \eqref{peks} has a global weak solution.
The following theorem gives the positive answer to the above open problem, that is, 
if we assume that $\chi>0$ is sufficiently small, 
which is equivalent to the smallness of the $L^1$ norm of the initial data,
then there exist global weak solutions to \eqref{peks}. 

\begin{thm}\label{thm1.4}
   Let $d\geq3,\ 0<\alpha<1,\ p = 1+\alpha-2/d$ and $(u_0,v_0) \in X$ be a pair of nonnegative functions.
   If $\chi>0$ is small enough then the same statement in Theorem \ref{thm1.1} holds.
\end{thm}

\begin{rem}\label{rem1.4}
   In \cite{Wi}, it is shown that if the $L^1$ norm of the initial data is small enough
   then there exists a global solution to the Keller--Segel system:
   \begin{align}\label{peks_g}
      \begin{cases}
      \dt u = \nabla\cdot(D(u)\nabla u) - \nabla\cdot(S(u)\nabla v)\quad &\mathrm{in}\ \Omega\times(0,\infty),\\
      \dt v = \Delta v - v + u &\mathrm{in}\ \Omega\times(0,\infty),
      \end{cases}
   \end{align}
   where $D \in C^2([0,\infty))$ and $S \in C^2([0,\infty))$ such that
   $D>0$ on $[0,\infty)$, $S(0) = 0 < S(s)$ for all $s>0$ and
   \begin{align*}
      \frac{S(s)}{D(s)} \leq K_{SD}s^{\frac{2}{n}}\quad \mathrm{for\ all}\ s>0
   \end{align*}
   for some constant $K_{SD}>0$.
   This result includes the critical case $p=1+\alpha-2/d$,
   but the first equation of \eqref{peks_g} needs to be non-degenerate ($D>0$ on $[0,\infty)$).
   On the other hand, 
   we treat the degenerate case $D(u) = p u^{p-1}$.
   Thus Theorem \ref{thm1.4} shows the global existence of weak solutions to 
   the Keller--Segel system with the degenerate diffusion in the critical case $p=1+\alpha-2/d$ ($0<\alpha<1$)
   by assuming small initial data.
\end{rem}

\begin{rem}
   The assumptions, $p>1+\alpha-2/d$ in Theorem \ref{thm1.1} and 
   $p=1+\alpha-2/d$ with small $\chi>0$ in Theorem \ref{thm1.4},
   are essentially used to get the boundedness from below of the energy functional $E$ (see Section 3).
   Hence our approach implies that 
   the global existence of weak solutions to \eqref{peks}
   is related to the boundedness from below of $E$,
   and has an advantage in that point.
   Indeed, the Keller--Segel systems with the degenerate diffusion in the sub-critical case 
   are considerd in \cite{TW,ISY},
   however that relationship cannot be seen.
   In addition, the similar relationship can be seen in \cite{Wi},
   in particular for the Keller--Segel system with the non-degenerate diffusion \eqref{peks_g},
   but we deal with the degenerate diffusion case \eqref{peks}.
   In other words,
   our approach gives the relationship between the global existence of weak solutions to 
   \eqref{peks}, which has the degenerate diffusion and the sub-linear sensitivity, 
   and the boundedness from below of the functional $E$
   in both the sub-critical case and the critical one.
\end{rem}

\begin{rem}
   Theorem \ref{thm1.1} and Theorem \ref{thm1.4} require 
   the initial data $u_0 \in L^{p+1-\alpha}(\Omega)$
   and $v_0 \in H^1(\Omega)$.
   On the other hand, in \cite{ISY},
   the initial data $u_0$ and $v_0$ should belong to $L^\infty(\Omega)$ and $W^{1,\infty}(\Omega)$
   respectivily.
   In addition, in \cite{Wi},
   the initial data $u_0$ and $v_0$ must be in $W^{1,\infty}(\Omega)$.
   Thus our results assume the lower regularity of the initial data to get the global weak solutions to \eqref{peks}.
\end{rem}

\begin{rem}
   In Theorems \ref{thm1.1} and \ref{thm1.4}, 
   by a little modification of the proof, 
   it also holds that for all $[s_1,s_2] \subset [0,T]$, 
   \begin{align*}
      &\int_{s_1}^{s_2}\int_{\Omega} (\nabla u^p - \chi u^\alpha\nabla v)\cdot\nabla\varphi\, dx\,dt 
       = \int_{\Omega} (u(\cdot,s_1) - u(\cdot,s_2))\varphi\, dx,\\
      &\int_{s_1}^{s_2}\int_{\Omega} \left[\nabla v\cdot\nabla\zeta + v\zeta - u\zeta\right]\, dx\, dt
       = \int_{\Omega} (v(\cdot,s_1) - v(\cdot,s_2))\zeta\, dx.
   \end{align*}
\end{rem}

\quad Due to lack of the Lipschitz property of the mobility function $u^\alpha\ (0<\alpha<1)$,
it is complicated for us to consider the Euler--Lagrange equation.
When $\alpha=1$, the mobility function $u$ is a smooth and Lipschitz function.
Then the Wasserstein distance has a good property that 
the perturbation $\mu_a$ of a measure $\mu$ can be represented by the push-forward measure of $\mu$
by a map $T_a : \Rd\ni x \mapsto x + a\xi \in \Rd$ for $a>0$ and $\xi \in C_c^\infty(\Rd;\Rd)$,
that is, $\mu_a = {T_a}_\#\mu$,
where ${T_a}_\#\mu$ is defined by 
${T_a}_\#\mu(A) = \mu(T_a^{-1}(A))$ for all Borel set $A \subset \Rd$.
On the other hand, when $0<\alpha<1$, the mobility function $u^\alpha$ is smooth but not Lipschitz
because of the singularity of its derivative at $u=0$.
Moreover, since it is not known that the weighted Wasserstein distance has a representation 
such as \eqref{eq81}, we cannot use the same way to consider the perturbation.

\quad Then we need to apply another way called the flow interchange lemma (see Section 4).
The flow interchange lemma is introduced in \cite{MMS} for the Wasserstein space, 
and Lisini, Matthes and Savar\'e apply it for the weighted Wasserstein space in \cite{LMS}.
But, since the Lipschitz property of the mobility function is needed for thier method,
the flow interchange lemma does not work directly in the case of the mobility $u^\alpha$.
To overcome this problem, 
we approximate the function $u^\alpha$ by a $C^\infty$ and Lipschitz function 
$m_{\varepsilon} : [0,\infty)\to[0,\infty)$ for $\varepsilon \in (0,1)$:
\begin{equation*}
   m_{\varepsilon}(r) \coloneq (r+\varepsilon)^\alpha,\
   m_\varepsilon^\prime(r) = \frac{\alpha}{(r+\varepsilon)^{1-\alpha}} \leq \frac{\alpha}{\varepsilon^{1-\alpha}}\quad
   \mathrm{for}\ r \in [0,\infty),
\end{equation*}
that is, we first consider the system: 
\begin{align}\label{peks_e}
   \begin{cases}
      \partial_t u = \nabla\cdot m_\varepsilon(u)\left(\frac{p}{p-\alpha}\nabla u^{p-\alpha} - \chi \nabla v\right)
       &\mathrm{in}\ \Omega\times(0,\infty),\\
      \dt v = \Delta v - v + u 
       &\mathrm{in}\ \Omega\times(0,\infty),\\
      \nabla u\cdot \boldsymbol{n} = \nabla v\cdot \boldsymbol{n} = 0
       &\mathrm{on}\ \partial\Omega\times(0,\infty),\\
      u(\cdot,0) = u_0(\cdot),\ v(\cdot,0) = v_0(\cdot)
       &\mathrm{in}\ \Omega.
    \end{cases}\tag*{(1.1)$_\varepsilon$}
\end{align}

\quad Thanks to this approximation, we can get the solutions to \ref{peks_e},
and then we need to obtain uniform estimates with respect to $\varepsilon$ 
and convergences as $\varepsilon\to0$.
The key point for uniform estimates is the boundedness of the functional
$\boldsymbol{U}_{\varepsilon} : L^{p+1-\alpha}\cap\mathcal{P}(\Omega) \to \mathbb{R}$ defined by
\begin{equation*}
   \boldsymbol{U}_{\varepsilon}(u) \coloneq \int_{\Omega} U_{\varepsilon}(u(x))\, dx,
\end{equation*}
where 
$U_{\varepsilon} : [0,\infty)\to\mathbb{R}$ satisfies $U_{\varepsilon}^{\prime\prime}(r)m_{\varepsilon}(r)=1$ and 
$U_\varepsilon^\prime(0) = U_\varepsilon(0) = 0$ (see Lemma \ref{lem2.11}).
On the other hand, the key point for the convergences, 
in particular the pointwise convergence for $t$ weakly in $L^1\cap L^{p+1-\alpha}(\Omega)$,
is the lower semicontinuity of the weighted Wasserstein distance (see Lemma \ref{lem2.7} and Lemma \ref{lem5.8}).
In order to get the convergence, we use the refined Ascoli--Arzel\`a theorem (\cite[Proposition 3.3.1]{AGS}),
and in more detail, we need the estimate like the equi-continuity
with respect to the weighted Wasserstein distance:
\begin{equation*}
   W_m(u_\tau(t),u_\tau(s)) \leq C(\sqrt{|t-s|} + \sqrt{\tau})\quad \mathrm{for}\ t,s \in [0,T],
\end{equation*}
where $m(u) = u^\alpha$, $u_\tau$ is a pointwise constant function (Definition \ref{dfn5.1})
and $C>0$ is a constant independent of $\varepsilon$ and $\tau$.
However, we only obtain such estimate replaced $m$ by $m_\varepsilon$,
that is, the distance depends on $\varepsilon$ (see Lemma \ref{lem5.3}).
Thus combining the lower semicontinuity of the weighted Wasserstein distance 
with the above estimate, we prove the pointwise convergence for $t$ weakly in $L^1\cap L^{p+1-\alpha}(\Omega)$.
This approach is original and may be applied for other cases 
if we use the lower semicontinuity of the weighted Wasserstein distance similarly.

\quad Finally, we remark that the way of the approximation of $u^\alpha$ by $(u+\varepsilon)^\alpha$
is important in the point of view of minimizing movements with the weighted Wasserstein space.
In \cite{LMS}, when they investigate the fourth-order partial differential equations,
which do not have Lipschitz mobility,
they also approximate the mobility function by another way 
such as $u^\alpha$ by $(u+\varepsilon)^\alpha - \varepsilon^\alpha$.
However, thier approximation requires that initial data $u_0$ belongs to $L^2(\Omega)$ 
in order to obtain the uniform estimate for $\boldsymbol{U}_\varepsilon$.
If $p< 1+\alpha$ then our initial data $u_0$ does not belong to $L^2(\Omega)$,
thus we cannot use thier approximation in that case.
On the other hand, in order to derive uniform estimate for $\boldsymbol{U}_\varepsilon$, 
our approximation requires that initial data $u_0$ belongs to $L^{2-\alpha}(\Omega)$,
which is naturally satisfied, hence it is effective to use our apporoximation
in our case.

\quad This paper is organized as follows.
In Section 2, we recall the definition of the weighted Wasserstein distance
and some properties of it introduced in \cite{DNS}.
Section 3 is devoted to the time discrete variational scheme.
In Section 4, we deal with the flow interchange lemma and prepare some lemmas to adapt it.
Fundamentally, we refer to the method in \cite{LMS},
but our functions (minimizers in Section 3) have a lower regularity than thier ones, 
so we derive a suitable regurality of minimizers to obtain the Euler--Lagrange equation (Lemma \ref{lem4.9}).
In Section 5, we consider uniform estimates with respect to $\tau$,
which yield that the time discrete solution $(u_\tau,v_\tau)$ (Definition \ref{dfn5.1})
converges to a weak solution to \ref{peks_e}.
Then in Section 6, we also establish uniform estimates with respect to $\varepsilon$,
which is guaranteed by the uniform estimate of $\boldsymbol{U}_\varepsilon$ (Lemma \ref{lem2.11}).
In addition, the pointwise convergence for $t$ weakly in $L^1\cap L^{p+1-\alpha}(\Omega)$ (Lemma \ref{lem5.8})
plays an important role in this section.
Using these estimates and convergences, we prove Theorem \ref{thm1.1} ($1+\alpha-2/d<p\leq 1+\alpha$) 
and Theorem \ref{thm1.4} ($p=1+\alpha-2/d$ and small $\chi>0$).


\section{Preliminary}

\subsection{Notations}

\begin{align*}
   &\mathcal{P}(\Rd) = \{\mu:\mu\ \mathrm{is\ a\ Borel\ probability\ measure\ in}\ \Rd\}\\
   &\mathcal{P}(\Omega) = \{\mu \in \mathcal{P}(\Rd):\mu(\Omega) = 1,\ \mu(\Rd\setminus\Omega) = 0\}\\
   &\mathcal{M}_{loc}^+(\Rd) = \{\mu:\mu\ \mathrm{is\ a\ nonnegative\ Radon\ measure\ in}\ \Rd\}\\
   &\mathcal{M}^+(\Omega) = \{\mu \in \mathcal{M}_{loc}^+(\Rd):\mu(\Rd\setminus\Omega) = 0\}\\
   &\mathcal{M}_{loc}(\Rd;\Rd) = \{\boldsymbol{\nu}:\boldsymbol{\nu}\ \mathrm{is\ a}\ \Rd\mbox{-}\mathrm{valued\ Radon\ measure\ in}\ \Rd\}\\
   &\mathcal{M}(\Omega;\Rd) = \{\boldsymbol{\nu} \in \mathcal{M}_{loc}(\Rd;\Rd):\boldsymbol{\nu}(\Rd\setminus\Omega) = 0\}\\
   &C_c(\Rd) = \{f \in C(\Rd):\mathrm{supp}(f)\ \mathrm{is\ compact\ in}\ \Rd\}\\
   &B_R = \{x \in \Rd:|x|<R\}
\end{align*}
Note that $\mathcal{P}(\Rd) \subset \mathcal{M}_{loc}^+(\Rd)$ and 
$\mathcal{P}(\Omega) \subset \mathcal{M}^+(\Omega)$.
By the Riesz representation theorem, 
$\mathcal{M}_{loc}^+(\Rd)$ (resp. $\mathcal{M}_{loc}(\Rd;\Rd)$) 
can be identified with the dual space of $C_c(\Rd)$ (resp. $C_c(\Rd;\Rd)$).

\subsection{Weighted Wasserstein distance}

\quad We recall the weighted Wasserstein distance 
which is a distance on the space of nonnegative Radon measures $\mathcal{M}^+(\Omega)$
and introduced in \cite{DNS}.
First, we recall the continuity equation for Radon measures.

\begin{dfn}({\cite[Definition 4.2]{DNS}}, Solutions of the continuity equation)\label{dfn2.2}
   Let $\mu_0, \mu_1 \in \mathcal{M}^+(\Omega)$.
   We denote by $CE(0,1;\mu_0\to\mu_1)$ the set of pairs of time dependent measures 
   $\{\mu_t\}_{t\in[0,1]} \subset \mathcal{M}^+(\Omega)$ and 
   $\{\boldsymbol{\nu}_t\}_{t\in[0,1]} \subset \mathcal{M}(\Omega;\Rd)$
   such that
   \begin{enumerate}
      \item $t\mapsto \mu_t$ is weakly* continuous in $\mathcal{M}_{loc}^+(\Rd)$
      :\ for all $f \in C_c(\Rd)$,\ 
      $$\int_{\Rd} f(x)\, d\mu_t(x)\ \mbox{is continuous with respect to }t \in [0,1],$$
      \item $\{\boldsymbol{\nu}_t\}_{t\in[0,1]}$ is a Borel measureable family with
      $$\int_0^1\int_{\Omega}\, d\boldsymbol{\nu}_t(x)\,dt <\infty,$$
      \item $(\mu_t,\boldsymbol{\nu}_t)$ satisfies the continuity equation in the weak sense
      :\ for all $\psi \in C_c^1(\mathbb{R}^d\times(0,1))$,\ 
      $$\int_0^1\int_{\mathbb{R}^d}\dt\psi(x,t)\, d\mu_t(x)\,dt + \int_0^1\int_{\mathbb{R}^d}\nabla\psi(x,t)\cdot d\boldsymbol{\nu}_t(x)\,dt = 0,$$
      \item $\mu_t|_{t=0} = \mu_0,\ \mu_t|_{t=1} = \mu_1$:\ for all $f \in C_c(\Rd)$,
      $$\lim_{t\to i}\int_{\Rd} f(x)\, d\mu_t(x) = \int_{\Rd} f(x)\, d\mu_i(x) \quad i=0,1.$$
   \end{enumerate}
\end{dfn}

\quad Next, we define the functional in the space of Radon measures, 
which is important for the 
definition of the weighted Wasserstein distance.

\begin{dfn}[{\cite[Section 3]{DNS}}, The action functional]\label{dfn2.3}
   Let $\mu \in \mathcal{M}^+(\Omega)$ and $\boldsymbol{\nu} \in \mathcal{M}(\Omega;\Rd)$.
   Let $m : [0,\infty) \to [0,\infty)$ be a concave function.
   Then we define the action functional 
   $\Psi_{\Omega} : \mathcal{M}^+(\Omega)\times\mathcal{M}(\Omega;\Rd) \to [0,\infty]$ by
   \begin{align*}
      \Psi_{\Omega}(\mu,\boldsymbol{\nu}) \coloneq 
      \begin{dcases}
         \int_{\Omega} \frac{|\boldsymbol{w}|^2}{m(\rho)}\, dx &\mathrm{if}\ \boldsymbol{\nu}^\perp = 0,\\
         \infty &\mathrm{if}\ \boldsymbol{\nu}^\perp \ne 0,
      \end{dcases} 
   \end{align*}
   where $\mu = \rho\mathscr{L}^d + \mu^\perp$ and $\boldsymbol{\nu} = \boldsymbol{w}\mathscr{L}^d + \boldsymbol{\nu}^\perp$ 
   are Lebesgue decomposition with respect to Lebesgue's measure $\mathscr{L}^d$,
   that is, $\rho \in L^1(\Omega),\ \boldsymbol{w} \in L^1(\Omega;\Rd)$ and 
   $\mu^\perp$ (resp. $\boldsymbol{\nu}^\perp$) is a singular part of $\mu$ (resp. $\boldsymbol{\nu}$).
\end{dfn}

\begin{dfn}[{\cite[Definition 5.1]{DNS}}, Weighted Wasserstein distance]\label{dfn2.4}
   Let $\mu_0, \mu_1 \in \mathcal{M}^+(\Omega)$ and 
   $m : [0,\infty) \to [0,\infty)$ be a concave function. 
   Then weighted Wasserstein distance between $\mu_0$ and $\mu_1$ is defined as
   \begin{align}\label{eq42}
      W_{m,\Omega}(\mu_0,\mu_1)^2 
      &\coloneq \inf\left[\int_0^1 \Psi_{\Omega}(\mu_t,\boldsymbol{\nu}_t)\, dt : (\mu_t,\boldsymbol{\nu}_t) \in CE(0,1;\mu_0\to\mu_1)\right]\\
      &=
      \begin{dcases}
         \inf\left[\int_0^1\int_{\Omega} \frac{|\boldsymbol{w}_t(x)|^2}{m(\rho_t(x))}\, dx\, dt : (\mu_t,\boldsymbol{\nu}_t) \in CE(0,1;\mu_0\to\mu_1)\right]\ &\mathrm{if}\ \boldsymbol{\nu}^\perp=0,\\
         \infty\ &\mathrm{if}\ \boldsymbol{\nu}^\perp\ne0.\notag
      \end{dcases}
   \end{align}
   We usually omit to write $\Omega$, then $W_m(\mu_0,\mu_1)$,
   but if we emphasize the domain $\Omega$, we write $W_{m,\Omega}(\mu_0,\mu_1)$.
\end{dfn}

\quad Next, we collect some properties of the weighted Wasserstein distance 
(\cite[Theorems 5.5, 5.4, 5.6, 2.3]{DNS}, \cite[Lemma 8.1.10]{AGS}).

\begin{prop}[Distance and topology]\label{prop2.5}
   The functional $W_m$ is a (pseudo) distance on $\mathcal{M}_{loc}^+(\Rd)$ 
   which induces a stronger topology than weak* one in $\mathcal{M}_{loc}^+(\Rd)$.
   Moreover bounded sets with respect to $W_m$ are weakly* relatively compact in $\mathcal{M}_{loc}^+(\Rd)$.
\end{prop}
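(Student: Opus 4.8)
The plan is to derive all three assertions directly from the dynamic (Benamou--Brenier) definition \eqref{eq42}; the statements themselves are due to \cite{DNS} and \cite{AGS}, so I record the self-contained arguments. Nonnegativity is immediate from $\Psi_\Omega\geq0$, and symmetry follows by time reversal: if $(\mu_t,\boldsymbol{\nu}_t)\in CE(0,1;\mu_0\to\mu_1)$, then $(\mu_{1-t},-\boldsymbol{\nu}_{1-t})\in CE(0,1;\mu_1\to\mu_0)$ carries the same action, since $\Psi_\Omega$ depends on $\boldsymbol{\nu}$ only through $|\boldsymbol{w}|^2$. The reason $W_m$ is only a \emph{pseudo}-distance (and may take the value $+\infty$) is the possible degeneracy of the concave weight $m$ at $0$, which prevents $W_m(\mu_0,\mu_1)=0$ from forcing $\mu_0=\mu_1$ in general.

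For the triangle inequality, fix $\varepsilon>0$ and choose $\varepsilon$-almost optimal curves $(\mu^1_t,\boldsymbol{\nu}^1_t)\in CE(0,1;\mu_0\to\mu_1)$ and $(\mu^2_t,\boldsymbol{\nu}^2_t)\in CE(0,1;\mu_1\to\mu_2)$ with actions $A_1^2$ and $A_2^2$. Given $\theta\in(0,1)$, reparametrize the first curve onto $[0,\theta]$ and the second onto $[\theta,1]$ and glue them at $\mu_1$. Under the rescaling $t\mapsto t/\theta$ the velocity acquires a factor $1/\theta$, and since $\Psi_\Omega$ is quadratic in $\boldsymbol{\nu}$, the change of variables yields total action $A_1^2/\theta+A_2^2/(1-\theta)$; optimizing in $\theta$ (at $\theta=A_1/(A_1+A_2)$) gives $(A_1+A_2)^2$. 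Letting $\varepsilon\to0$ proves $W_m(\mu_0,\mu_2)\leq W_m(\mu_0,\mu_1)+W_m(\mu_1,\mu_2)$.

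Both the comparison with the weak* topology and the compactness rest on one quantitative estimate. For $f\in C_c^1(\Rd)$, the continuity equation gives $\int f\,d\mu_1-\int f\,d\mu_0=\int_0^1\int\nabla f\cdot d\boldsymbol{\nu}_t\,dt$, and Cauchy--Schwarz applied to the splitting $|\boldsymbol{w}_t|=\frac{|\boldsymbol{w}_t|}{\sqrt{m(\rho_t)}}\sqrt{m(\rho_t)}$ yields
\[
   \Big|\int f\,d\mu_1-\int f\,d\mu_0\Big|
   \leq\Big(\int_0^1\!\!\int\frac{|\boldsymbol{w}_t|^2}{m(\rho_t)}\,dx\,dt\Big)^{1/2}
        \Big(\int_0^1\!\!\int m(\rho_t)|\nabla f|^2\,dx\,dt\Big)^{1/2}.
\]
Taking the infimum over admissible curves bounds the first factor by $W_m(\mu_0,\mu_1)$. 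As $m$ is concave with values in $[0,\infty)$, it has at most affine growth $m(r)\leq C(1+r)$, so the second factor is controlled by $\|\nabla f\|_\infty$ and the masses $\mu_t(\mathrm{supp}\,f)$ along the curve. Hence $W_m(\mu_n,\mu)\to0$ forces $\int f\,d\mu_n\to\int f\,d\mu$ for all such $f$, i.e.\ weak* convergence, which is the stronger-topology claim. Applying the same estimate with a fixed reference measure $\bar\mu$ and $f$ a cutoff on $B_R$ turns a $W_m$-bound into a uniform bound on the local masses $\mu(B_R)$ for each $R$; local weak* sequential compactness of Radon measures with uniformly bounded local mass (\cite[Lemma 8.1.10]{AGS}) then extracts a weakly* convergent subsequence in $\mathcal{M}_{loc}^+(\Rd)$, with nonnegative limit. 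In the bounded domain of the present paper this is simply a uniform total-mass bound on $\overline{\Omega}$.

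I expect the main obstacle to be controlling the factor $\int m(\rho_t)|\nabla f|^2\,dx$ uniformly along an almost-optimal curve: the sublinear bound reduces it to controlling $\mu_t(\mathrm{supp}\,f)$ for every $t\in[0,1]$, whereas these masses are a priori known only at the endpoints, so there is an apparent circularity. The resolution is to set $g(t)=\int f\,d\mu_t$, differentiate via the continuity equation, and close a Gronwall-type differential inequality (comparing $f$ against a slightly larger cutoff), so that the finiteness of the action propagates the endpoint mass bounds to all $t$. In the bounded-domain setting this trivializes, since choosing $f\equiv1$ on a neighborhood of $\overline{\Omega}$ makes $\nabla f$ vanish on the supports of the $\boldsymbol{\nu}_t$ and hence conserves mass exactly. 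This mass control is the technical heart underlying both the topology comparison and the compactness; granted it, the remaining assertions reduce to the scaling computation for the triangle inequality and to standard weak* compactness of measures.
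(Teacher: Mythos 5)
The paper offers no proof of Proposition \ref{prop2.5}: it is quoted directly from \cite[Theorems 5.4, 5.5, 5.6, 2.3]{DNS} together with \cite[Lemma 8.1.10]{AGS}, so there is no argument of the paper to compare yours against. Your blind reconstruction follows what is in fact the standard route of \cite{DNS} --- time reversal for symmetry, gluing plus quadratic reparametrization (with the optimal splitting $\theta=A_1/(A_1+A_2)$) for the triangle inequality, and the Cauchy--Schwarz duality estimate combined with Gronwall propagation of local masses for both the topology comparison and the weak* compactness --- and it is sound in outline. Two points, however, need repair. First, your explanation of the word ``pseudo'' is wrong: degeneracy of $m$ at $0$ does not destroy definiteness. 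Indeed, your own duality estimate, once the mass bound has been propagated along near-optimal curves, shows that $W_m(\mu_0,\mu_1)=0$ forces $\int f\,d\mu_0=\int f\,d\mu_1$ for every $f\in C_c^1(\Rd)$, hence $\mu_0=\mu_1$; the distance is ``pseudo'' only because it may assume the value $+\infty$ (for instance between measures in $\mathcal{M}^+(\Omega)$ of different total mass, since mass is conserved along finite-action curves).

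Second, the Gronwall closure cannot be run by ``comparing $f$ against a slightly larger cutoff'': with $g_R(t)=\int f_R\,d\mu_t$ that device only yields the hierarchy $g_R'(t)\lesssim\sqrt{a(t)}\,\sqrt{1+g_{2R}(t)}$, where $a$ is the action density, and this does not close. The standard fix is to choose cutoffs satisfying $|\nabla f|^2\le Cf$ (for example $f=\eta^2$ with $\eta$ a Lipschitz cutoff), so that the sublinearity $m(r)\le C(1+r)$ gives $\int m(\rho_t)|\nabla f|^2\,dx\le C\bigl(|\mathrm{supp}\,f|+\int f\,d\mu_t\bigr)$, and the resulting differential inequality involves only $g(t)=\int f\,d\mu_t$ itself; one then obtains $\sqrt{K+g(t)}\le\sqrt{K+g(0)}+C\bigl(\int_0^1 a\bigr)^{1/2}$, which is the uniform mass bound you need for the stronger-topology claim, for definiteness, and for turning a $W_m$-bound into local mass bounds before invoking weak* compactness. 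As you correctly note, in the bounded-domain setting of this paper the whole issue trivializes because mass is exactly conserved; but since the proposition is stated on $\mathcal{M}_{loc}^+(\Rd)$, the corrected cutoff argument is the one that must be used.
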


\begin{lem}[Existence of minimizers]\label{lem2.6}
   Let $\mu_0,\mu_1 \in \mathcal{M}^+(\Omega)$.
   Whenever the infimun in \eqref{eq42} is a finite value, 
   it is attained by a curve $(\mu_t,\boldsymbol{\nu}_t) \in CE(0,1;\mu_0\to\mu_1)$.
\end{lem}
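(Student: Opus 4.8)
The plan is to use the direct method of the calculus of variations. Fix $\mu_0,\mu_1\in\mathcal M^+(\Omega)$ for which $L:=W_m(\mu_0,\mu_1)^2<\infty$, and choose a minimizing sequence $(\mu^n_t,\boldsymbol\nu^n_t)\in CE(0,1;\mu_0\to\mu_1)$ with $\int_0^1\Psi_\Omega(\mu^n_t,\boldsymbol\nu^n_t)\,dt\le L+1$ for every $n$. Since the action is finite, each $\boldsymbol\nu^n_t$ has vanishing singular part, so $\boldsymbol\nu^n_t=\boldsymbol w^n_t\mathscr L^d$ with $\int_\Omega|\boldsymbol w^n_t|^2/m(\rho^n_t)\,dx<\infty$; moreover, testing the continuity equation in Definition \ref{dfn2.2}(3) against a function $\psi(x,t)=\phi(t)\eta(x)$ with $\eta\equiv1$ near $\overline\Omega$ shows that the total mass $\mu^n_t(\Omega)$ is conserved, hence uniformly bounded by $M:=\mu_0(\Omega)$. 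I would then establish two a priori bounds, uniform in $n$, and pass to the limit.

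First, compactness of the measures. Restricting a competitor to a subinterval $[s,t]$ and rescaling time to $[0,1]$ (which multiplies the flux by $t-s$ and, since $\Psi_\Omega$ is quadratic in $\boldsymbol w$, multiplies the action density by $(t-s)^2$) yields the metric estimate
\[
  W_m(\mu^n_s,\mu^n_t)^2\le (t-s)\int_s^t\Psi_\Omega(\mu^n_r,\boldsymbol\nu^n_r)\,dr\le (L+1)\,|t-s|,
\]
so the curves $t\mapsto\mu^n_t$ are uniformly $1/2$-H\"older continuous with respect to $W_m$, and $W_m$-bounded because of the fixed endpoints. By Proposition \ref{prop2.5}, $W_m$-bounded sets are weakly* relatively compact in $\mathcal M^+_{loc}(\Rd)$ and $W_m$ induces a stronger topology than the weak* one; hence the refined Ascoli--Arzel\`a theorem (\cite[Proposition 3.3.1]{AGS}) provides a subsequence with $\mu^n_t\rightharpoonup^{*}\mu_t$ for every $t\in[0,1]$, the limit curve again being $1/2$-H\"older in $W_m$ with the prescribed endpoints $\mu_0,\mu_1$.

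Second, compactness of the fluxes. Since $m$ is concave and nonnegative it is nondecreasing and grows at most linearly, so $\int_\Omega m(\rho^n_t)\,dx\le C(|\Omega|+M)$, and the Cauchy--Schwarz inequality gives
\[
  \int_\Omega|\boldsymbol w^n_t|\,dx\le\Big(\int_\Omega\frac{|\boldsymbol w^n_t|^2}{m(\rho^n_t)}\,dx\Big)^{1/2}\Big(\int_\Omega m(\rho^n_t)\,dx\Big)^{1/2}\le C'\,\Psi_\Omega(\mu^n_t,\boldsymbol\nu^n_t)^{1/2}.
\]
Integrating in $t$ and applying Cauchy--Schwarz once more bounds $\int_0^1\int_\Omega|\boldsymbol w^n_t|\,dx\,dt$ uniformly in $n$. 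Viewing $\boldsymbol\nu^n_t\otimes dt$ as $\Rd$-valued measures on $\Omega\times(0,1)$ of uniformly bounded total variation, I extract a further subsequence converging weakly* to some $\boldsymbol\nu_t\otimes dt$, and disintegrate in time to obtain the limiting Borel family $\{\boldsymbol\nu_t\}$. Because the continuity equation is linear in $(\mu,\boldsymbol\nu)$ and its test functions have $\partial_t\psi,\nabla\psi$ bounded with compact support, the pointwise weak* convergence of $\mu^n_t$ (dominated in $t$, hence integrable by dominated convergence) and the weak* convergence of the fluxes both pass to the limit, so $(\mu_t,\boldsymbol\nu_t)\in CE(0,1;\mu_0\to\mu_1)$.

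It remains to prove the lower semicontinuity $\int_0^1\Psi_\Omega(\mu_t,\boldsymbol\nu_t)\,dt\le\liminf_n\int_0^1\Psi_\Omega(\mu^n_t,\boldsymbol\nu^n_t)\,dt$, which I expect to be the main obstacle. The structural fact behind it is that, for concave $m$, the density $(\rho,\boldsymbol w)\mapsto|\boldsymbol w|^2/m(\rho)$ is jointly convex on $(0,\infty)\times\Rd$ (a direct Hessian computation reduces positivity of the determinant to $-m''\ge0$) and lower semicontinuous, while its recession function equals $+\infty$ in any direction with nonzero flux and $0$ in the pure-mass direction, which is precisely the singular-part convention of Definition \ref{dfn2.3}. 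Granting this, the weak* lower semicontinuity of convex integral functionals of measures (Reshetnyak/Bouchitt\'e--Buttazzo type theory, as used in \cite{DNS}) applied to the space-time measures $\mu^n_t\otimes dt$ and $\boldsymbol\nu^n_t\otimes dt$ yields the claim, whence $(\mu_t,\boldsymbol\nu_t)$ attains the infimum. The delicate point is controlling any singular mass or flux created in the limit so that it is correctly penalized, which is exactly what the recession-function analysis governs; since this lower semicontinuity is already established in \cite{DNS}, one may alternatively invoke it directly.
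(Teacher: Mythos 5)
The paper never proves this lemma: it is one of the properties quoted wholesale from Dolbeault--Nazaret--Savar\'e (the citation block ``\cite[Theorems 5.5, 5.4, 5.6, 2.3]{DNS}'' preceding Proposition \ref{prop2.5}), so there is no internal proof to compare against. Your direct-method argument is, in substance, a reconstruction of the proof in that reference: mass conservation from the continuity equation, the time-rescaling estimate $W_m(\mu^n_s,\mu^n_t)^2\le |t-s|\int_s^t\Psi_\Omega(\mu^n_r,\boldsymbol{\nu}^n_r)\,dr$ giving equi-H\"older continuity, compactness of the curves via the refined Ascoli--Arzel\`a theorem combined with the weak* lower semicontinuity of $W_m$ (Lemma \ref{lem2.7}), a total-variation bound on the fluxes by Cauchy--Schwarz, linearity of the continuity equation in the passage to the limit, and joint convexity of $(\rho,\boldsymbol{w})\mapsto|\boldsymbol{w}|^2/m(\rho)$ feeding a Reshetnyak-type lower semicontinuity theorem. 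So the proposal is correct in outline and follows the same route as the source the paper relies on.

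Two points deserve sharpening. First, to disintegrate the limiting space-time flux as $\boldsymbol{\nu}_t\otimes dt$ you must check that the time marginal of $|\boldsymbol{\nu}|$ is absolutely continuous with respect to $dt$; an $L^1$-in-time bound on $\|\boldsymbol{\nu}^n_t\|_{TV}$ alone would not rule out concentration in time. Your own estimate saves you, since $\|\boldsymbol{\nu}^n_t\|_{TV}\le C'\Psi_\Omega(\mu^n_t,\boldsymbol{\nu}^n_t)^{1/2}$ is in fact bounded in $L^2(0,1)$, and $L^2$-bounded marginal densities are uniformly integrable, so the limit marginal has no singular part; this should be said explicitly. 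Second, your claim that the recession function of $|\boldsymbol{w}|^2/m(\rho)$ is $+\infty$ in every direction with nonzero flux is exactly the point where sublinearity of $m$ at infinity silently enters: for a concave $m$ with linear growth, say $m(r)=r$, the recession in a direction $(\rho,\boldsymbol{w})$ with $\rho>0$ equals the finite value $|\boldsymbol{w}|^2/\rho$, singular mass can then carry flux at finite cost, and the infimum in \eqref{eq42} under Definition \ref{dfn2.3}'s convention (action $=\infty$ whenever $\boldsymbol{\nu}^\perp\ne0$) is in general \emph{not} attained --- two Dirac endpoints give a finite infimum whose only candidate minimizer, the translating Dirac, has singular flux. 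So both your proof and the lemma as literally stated need $\lim_{r\to\infty}m(r)/r=0$; this holds for every mobility actually used in the paper ($(r+\varepsilon)^\alpha$ and $r^\alpha$ with $\alpha<1$), but not for arbitrary concave $m$, and it is worth recording where that hypothesis is used.
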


\begin{lem}[Lower semicontinuity]\label{lem2.7}
   Let $\{\Omega_n\}_n$ be a sequence of bounded domain converging to 
   a bounded domain $\Omega$, that is, 
   $\mathscr{L}^d|_{\Omega_n} \rightharpoonup \mathscr{L}^d|_{\Omega}$ weakly* in $\mathcal{M}_{loc}^+(\Rd)$ as $n\to\infty$:
   \begin{align*}
      \int_{\Rd} f(x)\, d\mathscr{L}^d|_{\Omega_n}(x) \to \int_{\Rd} f(x)\, d\mathscr{L}^d|_{\Omega}(x)
      \quad \mathrm{as}\ n\to\infty,
   \end{align*}
   for all $f\in C_c(\Rd)$.
   Moreover, let 
   series of functions $\{m_n\}_n$ be monotonically decreasing with respect to $n$ 
   and pointwise converging to $m$ as $n\to\infty$.
   Then, 
   the map $(\mu_0,\mu_1) \mapsto W_m(\mu_0,\mu_1)$ is lower semicontinuous with respect to
   weak* convergence in $\mathcal{M}_{loc}^+(\Rd)$: 
   if sequences of measures $\{\mu_0^n\}_n$ and $\{\mu_1^n\}_n$ satisfy
   $\mu_0^n \rightharpoonup \mu_0,\ \mu_1^n \rightharpoonup \mu_1$ 
   weakly* in $\mathcal{M}_{loc}^+(\Rd)$ as $n\to\infty$
   then
   \begin{equation}
      W_{m,\Omega}(\mu_0,\mu_1) \leq \liminf_{n\to\infty}W_{m_n,\Omega_n}(\mu_0^n,\mu_1^n).
   \end{equation}
\end{lem}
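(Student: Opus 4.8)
The plan is to run the standard Benamou--Brenier lower semicontinuity argument for the action functional, but with two extra limits (in the domain and in the weight) decoupled by the monotonicity of $\{m_n\}$. I may assume $L \coloneq \liminf_{n\to\infty} W_{m_n,\Omega_n}(\mu_0^n,\mu_1^n) < \infty$, since otherwise there is nothing to prove, and after passing to a subsequence I may assume the liminf is attained as a genuine limit $W_{m_n,\Omega_n}(\mu_0^n,\mu_1^n)\to L$, with each value finite. For such $n$, Lemma~\ref{lem2.6} produces a minimizing curve $(\mu_t^n,\boldsymbol{\nu}_t^n)\in CE(0,1;\mu_0^n\to\mu_1^n)$ with $\int_0^1 \Psi_{\Omega_n}(\mu_t^n,\boldsymbol{\nu}_t^n)\,dt = W_{m_n,\Omega_n}(\mu_0^n,\mu_1^n)^2$, the action carrying the weight $m_n$. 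The goal is to extract a limit curve $(\mu_t,\boldsymbol{\nu}_t)\in CE(0,1;\mu_0\to\mu_1)$ whose action against the weight $m$ is at most $L^2$; feeding this curve into the infimum defining $W_{m,\Omega}$ then closes the argument.

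The first step is compactness. Because $\Omega_n\to\Omega$, all $\Omega_n$ lie in a fixed ball, so testing the continuity equation against $\eta(t)\chi(x)$ with $\chi\equiv1$ on that ball shows $\mu_t^n(\Rd)$ is constant in $t$ and uniformly bounded. Since each $m_n$ is concave and $m_n\le m_1$, linear growth gives $m_n(\rho)\le C(1+\rho)$, and Cauchy--Schwarz yields
\[
   \int_0^1 |\boldsymbol{\nu}_t^n|(\Rd)\,dt
   \le \left(\int_0^1 \Psi_{\Omega_n}(\mu_t^n,\boldsymbol{\nu}_t^n)\,dt\right)^{1/2}
       \left(\int_0^1\!\!\int_{\Omega_n} m_n(\rho_t^n)\,dx\,dt\right)^{1/2}\le C,
\]
so the space--time measures $\mu_t^n\,dt$ and $\boldsymbol{\nu}_t^n\,dt$ are bounded in total variation on $[0,1]\times\Rd$, hence weakly* precompact; a further subsequence gives limits that disintegrate as $\mu_t\,dt$ and $\boldsymbol{\nu}_t\,dt$. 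The continuity equation is linear in $(\mu,\boldsymbol{\nu})$ and tested against fixed $\psi\in C_c^1(\Rd\times(0,1))$, so it passes to the limit. The flux bound also gives the uniform estimate $|\int f\,d\mu_t^n-\int f\,d\mu_s^n|\le \|\nabla f\|_\infty\,C|t-s|^{1/2}$ for $f\in C_c^1(\Rd)$, so the refined Arzel\`a--Ascoli theorem \cite[Proposition 3.3.1]{AGS} upgrades the convergence to every $t$ and identifies the endpoints with $\mu_0,\mu_1$; the hypothesis $\mathscr{L}^d|_{\Omega_n}\rightharpoonup\mathscr{L}^d|_\Omega$ keeps the limit concentrated on $\Omega$, so $(\mu_t,\boldsymbol{\nu}_t)\in CE(0,1;\mu_0\to\mu_1)$ with values in $\mathcal{M}^+(\Omega)$.

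The heart of the argument is the lower semicontinuity of the action. The structural fact is that for concave $m$ the integrand $(\rho,\boldsymbol{w})\mapsto |\boldsymbol{w}|^2/m(\rho)$ is jointly convex on $[0,\infty)\times\Rd$: a direct Hessian computation reduces positivity of its $2\times2$ blocks to $-m''\ge0$, and its recession function is $+\infty$ in every direction with $\rho=0,\ \boldsymbol{w}\ne0$, which is exactly what forces the $+\infty$ penalty on the singular part $\boldsymbol{\nu}^\perp$. By the Bouchitt\'e--Buttazzo representation of convex integral functionals of measures, $\Psi_\Omega$ with any fixed weight is then a supremum of weak*-continuous linear functionals under a pointwise convex constraint, hence weakly* lower semicontinuous, and the domain convergence lets admissible test fields for $\Omega$ be transplanted to the $\Omega_n$; this is the content of \cite[Theorems 5.5]{DNS} together with \cite[Lemma 8.1.10]{AGS}. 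To absorb the varying weight I freeze a level $k$: for $n\ge k$ monotonicity gives $m_n\le m_k$, hence $\Psi_{\Omega_n}^{m_n}\ge\Psi_{\Omega_n}^{m_k}$, and fixed-weight lower semicontinuity yields
\[
   \liminf_{n\to\infty}\int_0^1 \Psi_{\Omega_n}^{m_n}(\mu_t^n,\boldsymbol{\nu}_t^n)\,dt
   \ \ge\ \int_0^1 \Psi_{\Omega}^{m_k}(\mu_t,\boldsymbol{\nu}_t)\,dt .
\]
Letting $k\to\infty$ and using $m_k\downarrow m$ (so $1/m_k\uparrow 1/m$), monotone convergence raises the right-hand side to $\int_0^1\Psi_\Omega^{m}(\mu_t,\boldsymbol{\nu}_t)\,dt\ge W_{m,\Omega}(\mu_0,\mu_1)^2$. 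Since the left-hand side equals $L^2$, the lemma follows.

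I expect the main obstacle to be the fixed-weight lower semicontinuity under simultaneous variation of the measures and the domain, namely the displayed inequality for frozen $m_k$: one must check that the dual constraint sets attached to $\Omega_n$ approximate that of $\Omega$ so that no action is lost in the limit, while preserving the $+\infty$ penalization of $\boldsymbol{\nu}^\perp$. The monotonicity of $\{m_n\}$ is precisely the device that decouples this geometric--analytic difficulty from the separate limit in the weight, which is then rendered harmless by monotone convergence.
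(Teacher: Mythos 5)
First, a point of orientation: the paper never proves Lemma \ref{lem2.7} at all — it is imported from \cite[Theorems 5.4--5.6, 2.3]{DNS} together with \cite[Lemma 8.1.10]{AGS} — so your argument has to stand entirely on its own. Its skeleton is the right one, and is essentially the argument behind the cited results: extract minimizing curves via Lemma \ref{lem2.6}, obtain space--time compactness from the action bound, pass the (linear) continuity equation to the limit, and prove lower semicontinuity of the action using joint convexity of $(\rho,\boldsymbol{w})\mapsto|\boldsymbol{w}|^2/m(\rho)$, with the weight frozen at $m_k$ (via $m_n\le m_k$ for $n\ge k$) and then released by monotone convergence ($1/m_k\uparrow 1/m$). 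That two-step handling of the varying weight is correct and clean.

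The genuine gap is the single clause ``the hypothesis $\mathscr{L}^d|_{\Omega_n}\rightharpoonup\mathscr{L}^d|_{\Omega}$ keeps the limit concentrated on $\Omega$.'' This is precisely the crux of the varying-domain statement, and it does not follow from the hypothesis as written. Weak* convergence of $\mathscr{L}^d|_{\Omega_n}$ controls only volumes locally: it allows $\Omega_n$ to have ``tentacles'' of vanishing volume reaching arbitrarily far from $\Omega$ (so your claim that all $\Omega_n$ lie in a fixed ball — on which your mass-conservation test function, the bound $\int_{\Omega_n}m_n(\rho^n_t)\,dx\le C$, and hence the flux bound all rest — also fails in general), and the curves $\mu^n_t$ are constrained only to live on $\Omega_n$. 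The portmanteau inequality then gives concentration of the limit curve merely on $\bigcap_{m}\overline{\bigcup_{n\ge m}\Omega_n}$, which can be strictly larger than $\overline{\Omega}$. Even in the situation where the paper actually invokes the lemma (proof of Lemma \ref{lem2.10}: smooth $\Omega_n\supset\Omega$ decreasing to $\Omega$), this reasoning only yields concentration on $\overline{\Omega}$, whereas Definitions \ref{dfn2.2} and \ref{dfn2.4} demand curves in $\mathcal{M}^+(\Omega)=\{\mu:\mu(\Rd\setminus\Omega)=0\}$, i.e.\ charging $\partial\Omega$ is forbidden; removing possible limit mass on $\partial\Omega$ (for instance by the dilation $x\mapsto a_n x$ device the paper itself uses in Section 4 for convex $\Omega$) requires a separate argument that your proof does not contain. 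Note also that your closing paragraph misplaces the difficulty: once concentration of the limit curve is known, $\Psi_{\Omega_n}$ coincides with the integral over all of $\Rd$ (the densities vanish a.e.\ outside $\Omega_n$), so the frozen-weight lower semicontinuity is the classical, domain-free convex-integrand statement; no approximation of ``dual constraint sets attached to $\Omega_n$'' is involved. The domain enters only through admissibility of the limit curve, which is exactly the step you skipped.

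Two smaller flaws in the compactness step: (i) you disintegrate the limit flux as $\boldsymbol{\nu}_t\,dt$ before knowing its time marginal is absolutely continuous; the inherited estimate $\int_s^t|\boldsymbol{\nu}^n_r|(\Rd)\,dr\le C\sqrt{t-s}$ does not force this (Cantor-type marginals satisfy such a bound). The standard repair is to apply the convex lower-semicontinuity theorem in space--time first, so that finiteness of the limiting space--time action itself forces the limit flux to be $\mathscr{L}^{d+1}$-absolutely continuous, after which the disintegration is automatic. (ii) The statement must be read with $\mu^n_i\in\mathcal{M}^+(\Omega_n)$ and $\mu_i\in\mathcal{M}^+(\Omega)$, otherwise the two sides of the inequality are not defined; your proof implicitly uses this but never records it.
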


\begin{lem}[Convolution]\label{lem2.8}
   Let $\psi : (0,\infty)\times\Rd \to [0,\infty)$ be a convex 
   and lower semicontinuous function satisfying $\psi(\cdot,0) = 0$.
   Let $\mu \in \mathcal{M}^+(\Omega)$ and 
   $\boldsymbol{\nu} \in \mathcal{M}(\Omega;\Rd)$ 
   be such that $\mu = \rho\mathscr{L}^d + \mu^\perp$ and 
   $\boldsymbol{\nu} = \boldsymbol{w}\mathscr{L}^d + \boldsymbol{\nu}^{\perp}$ 
   where $\rho \in L^1(\Omega)$,
   $\boldsymbol{w} \in L^1(\Omega;\Rd)$ and 
   $\mu^\perp$,
   $\boldsymbol{\nu}^{\perp}$ are singular parts of Lebesgue decomposition.
   We define $\Psi(\mu,\boldsymbol{\nu}) \coloneq 
    \int_{\Rd} \psi(\rho(x),\boldsymbol{w}(x))\, dx = \int_{\Omega}\psi(\rho(x),\boldsymbol{w}(x))\, dx$ 
   and let $k \in C_c^\infty(\Rd)$ be a nonnegative convolution kernel 
   with $\int_{\Rd}k(x)\, dx=1$.
   Then 
   \begin{equation}
      \Psi(\mu*k,\boldsymbol{\nu}*k) \leq \Psi(\mu,\boldsymbol{\nu}),
   \end{equation}
   where $\mu*k$ (resp. $\boldsymbol{\nu}*k$) is the measure defined by the (density) function
   \begin{equation*}
      x \mapsto \mu*k(x) \coloneq \int_{\Rd} k(x-y)\, d\mu(y)\
      \left(\mbox{resp.}\ \boldsymbol{\nu}*k \coloneq \int_{\Rd} k(x-y)\, d\boldsymbol{\nu}(y)\right).
   \end{equation*}
\end{lem}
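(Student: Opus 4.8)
The plan is to reduce the inequality to a pointwise Jensen inequality in the space variable, followed by Tonelli's theorem. First I would record that, since $k$ is smooth with compact support, both $\mu*k$ and $\boldsymbol{\nu}*k$ are absolutely continuous with (smooth) densities $\widetilde{\rho}(x)=\int_{\mathbb{R}^d}k(x-y)\,d\mu(y)\ge0$ and $\widetilde{\boldsymbol{w}}(x)=\int_{\mathbb{R}^d}k(x-y)\,d\boldsymbol{\nu}(y)$; in particular their singular parts vanish, so that $\Psi(\mu*k,\boldsymbol{\nu}*k)=\int_{\mathbb{R}^d}\psi(\widetilde{\rho}(x),\widetilde{\boldsymbol{w}}(x))\,dx$ and the claim becomes an inequality between two genuine Lebesgue integrals.

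The heart of the argument is the observation that, because $k\ge0$ and $\int_{\mathbb{R}^d}k\,dx=1$, for each fixed $x$ the measure $k(x-y)\,dy$ is a probability measure in $y$. In the purely absolutely continuous case $\mu=\rho\mathscr{L}^d$, $\boldsymbol{\nu}=\boldsymbol{w}\mathscr{L}^d$, the pair $(\widetilde{\rho}(x),\widetilde{\boldsymbol{w}}(x))$ is exactly the barycenter of $(\rho(y),\boldsymbol{w}(y))$ against this probability measure, so convexity of $\psi$ and Jensen's inequality give $\psi(\widetilde{\rho}(x),\widetilde{\boldsymbol{w}}(x))\le\int_{\mathbb{R}^d}\psi(\rho(y),\boldsymbol{w}(y))\,k(x-y)\,dy$ pointwise in $x$. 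Integrating in $x$, using Tonelli (every integrand is nonnegative) and $\int_{\mathbb{R}^d}k(x-y)\,dx=1$, collapses the right-hand side to $\int_{\mathbb{R}^d}\psi(\rho,\boldsymbol{w})\,dy=\Psi(\mu,\boldsymbol{\nu})$, which is the assertion.

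To make this rigorous for general $\mu\in\mathcal{M}^+(\Omega)$ and $\boldsymbol{\nu}\in\mathcal{M}(\Omega;\mathbb{R}^d)$, and to see where the hypotheses enter, I would represent $\psi$ as a countable supremum of affine minorants, $\psi(\rho,\boldsymbol{w})=\sup_n(a_n\rho+\boldsymbol{b}_n\cdot\boldsymbol{w}+c_n)$. The standing assumptions $\psi\ge0$ and $\psi(\cdot,0)=0$ force $a_n\le0$ and $c_n\le0$: from $a_n\rho+c_n\le\psi(\rho,0)=0$ for all $\rho>0$ one gets $a_n\le0$ by letting $\rho\to\infty$ and $c_n\le0$ by letting $\rho\to0^+$. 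Substituting the Lebesgue decompositions $d\mu=\rho\,d\mathscr{L}^d+d\mu^\perp$, $d\boldsymbol{\nu}=\boldsymbol{w}\,d\mathscr{L}^d+d\boldsymbol{\nu}^\perp$ into $a_n\widetilde{\rho}(x)+\boldsymbol{b}_n\cdot\widetilde{\boldsymbol{w}}(x)+c_n$ and writing $c_n=\int k(x-y)c_n\,dy$, the absolutely continuous part is bounded by $\int k(x-y)\psi(\rho,\boldsymbol{w})\,dy$ exactly as above, while the singular-in-$\mu$ contribution $a_n\int k(x-y)\,d\mu^\perp(y)$ is nonpositive, since $a_n\le0$, $k\ge0$ and $\mu^\perp\ge0$. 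Thus everything is controlled except the term coming from $\boldsymbol{\nu}^\perp$, treated next; once that term is handled, taking the supremum over $n$ and integrating in $x$ finishes the proof.

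The main obstacle is precisely the singular part $\boldsymbol{\nu}^\perp$, whose convolution adds the term $\boldsymbol{b}_n\int k(x-y)\,d\boldsymbol{\nu}^\perp(y)$, which carries no definite sign; indeed a one-line example (e.g. $\mu=\mathscr{L}^d|_\Omega$, $\boldsymbol{\nu}=\delta_{y_0}\boldsymbol{e}_1$) shows the inequality would be \emph{false} for $\psi=|\boldsymbol{w}|^2/m(\rho)$ if $\boldsymbol{\nu}^\perp\ne0$. The resolution is that the statement is only invoked in the regime $\boldsymbol{\nu}^\perp=0$: in the convention of Definition \ref{dfn2.3} one has $\Psi(\mu,\boldsymbol{\nu})=+\infty$ as soon as $\boldsymbol{\nu}^\perp\ne0$ (the recession $\psi^\infty(0,\cdot)$ of the action integrand is $+\infty$), so there is nothing to prove then, and for $\boldsymbol{\nu}^\perp=0$ the estimate above is complete. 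As a cleaner alternative that dispatches the singular parts automatically, I would dualize, writing $\Psi(\mu,\boldsymbol{\nu})=\sup\{\int a\,d\mu+\int\boldsymbol{b}\cdot d\boldsymbol{\nu}-\int\psi^*(a,\boldsymbol{b})\,dx\}$ over $(a,\boldsymbol{b})\in C_c(\mathbb{R}^d;\mathbb{R}\times\mathbb{R}^d)$, transfer the convolution onto the test pair via $\int a\,d(\mu*k)=\int(a*\check{k})\,d\mu$ with $\check{k}(y)=k(-y)$, and apply Jensen to the convex conjugate $\psi^*$ to obtain $\int\psi^*(a*\check{k},\boldsymbol{b}*\check{k})\,dx\le\int\psi^*(a,\boldsymbol{b})\,dx$; since $(a*\check{k},\boldsymbol{b}*\check{k})$ is again an admissible competitor, this yields $\Psi(\mu*k,\boldsymbol{\nu}*k)\le\Psi(\mu,\boldsymbol{\nu})$ directly.
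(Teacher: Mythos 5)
The paper never proves this lemma: it is one of the properties collected from the literature (\cite[Theorem 2.3]{DNS}, \cite[Lemma 8.1.10]{AGS}), so there is no in-paper proof to compare against. Your argument is correct and is essentially the standard proof behind those citations: the convolved measures are absolutely continuous, for each fixed $x$ the kernel $k(x-y)\,dy$ is a probability measure, so Jensen's inequality (equivalently, your representation of $\psi$ as a countable supremum of affine minorants, or the dual formulation with $\psi^*$) gives the pointwise bound $\psi\bigl(\mu\ast k(x),\boldsymbol{\nu}\ast k(x)\bigr)\le\int_{\Rd}k(x-y)\,\psi(\rho(y),\boldsymbol{w}(y))\,dy$ in the absolutely continuous regime, and Tonelli together with $\int_{\Rd}k(x-y)\,dx=1$ collapses the double integral to $\Psi(\mu,\boldsymbol{\nu})$.

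Your discussion of the singular parts is also correct, and it flags a genuine defect in the statement as written. Since $\Psi$ is defined here by integrating only the densities $(\rho,\boldsymbol{w})$, the claimed inequality is false in general when $\boldsymbol{\nu}^\perp\neq0$: in your example $\mu=\mathscr{L}^d|_{\Omega}$, $\boldsymbol{\nu}=\delta_{y_0}\boldsymbol{e}_1$ one has $\Psi(\mu,\boldsymbol{\nu})=0$ while $\Psi(\mu\ast k,\boldsymbol{\nu}\ast k)>0$ for the action integrand. The statement is only coherent under the convention of Definition \ref{dfn2.3}, namely that the functional is $+\infty$ as soon as $\boldsymbol{\nu}^\perp\neq0$, and that is precisely the regime in which the paper invokes the lemma: in the proof of Lemma \ref{lem2.10} it is applied to a minimizing curve $(\mu_t,\boldsymbol{\nu}_t)$ of finite action, for which $\boldsymbol{\nu}_t^\perp=0$ automatically. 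Your treatment of $\mu^\perp$ via affine minorants with $a_n\le0$, $c_n\le0$ (forced by $\psi\ge0$ and $\psi(\cdot,0)=0$) correctly shows that a singular part of $\mu$ causes no harm, so with the $\boldsymbol{\nu}^\perp=0$ proviso your proof is complete.
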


\quad Next lemma implies that 
the minimizer of the weighted Wasserstein distance can be approximated by smooth denisity functions.
In \cite[Lemma 3.6]{CLSS}, 
they showed the existence of smooth functions ($\rho_n$ and $\boldsymbol{w}_n$ in next lemma)
approximating the minimizer of the weighted Wasserstein distance,
however we will prove that 
the weighted Wasserstein distance can be approximated by another smooth functions ($\rho_n$ and $\phi_n$ in next lemma).
Note that, in \cite[Proposition 2.2]{LMS}, they stated similar approximation lemma,
but they did not give the proof, so we give rigorous one.
Thanks to this approximation, we can do calculations simply and adapt the flow interchange lemma (see Section 4).

\begin{lem}\label{lem2.10}
   Let $m \in C^\infty(0,\infty)$ be a positive concave function
   such that $\inf_{r\in(0,\infty)}m(r)>0$.
   Let $\mu_0, \mu_1 \in L^q\cap\mathcal{P}(\Omega)$ for $q\in[1,\infty)$
   with $W_m(\mu_0,\mu_1)<\infty$.
   Then for every decreasing sequence of smooth bounded sets $\Omega_n$ converging to $\Omega$ as $n\to\infty$,
   that is, $\Omega_{n+1} \subset \Omega_n$ for $n\in \mathbb{N}$ 
   and $\mathscr{L}^d|_{\Omega_n} \rightharpoonup \mathscr{L}^d|_{\Omega}$ weakly* in $\mathcal{M}_{loc}^+(\Rd)$ as $n\to\infty$,
   there exists a vanishing sequence $\{b_n\}_n$ such that $\Omega_{[b_n]} \subset \Omega_n$, 
   where $\Omega_{[b_n]} \coloneq \Omega + b_nB_1 = \{x + b_ny:x\in \Omega, y \in B_1\}$, and
   there exist a nonnegative function $\rho_n \in C^\infty(\overline{\Omega}_n\times[0,1])$ 
   and a function $\phi_n \in C^\infty(\overline{\Omega}_n\times[0,1])$
   with $\nabla\phi_n\cdot\boldsymbol{n} = 0$ on $\partial\Omega_n\times[0,1]$,
   satisfying the following:
   \begin{enumerate}
      \item $\|\rho_n(t)\|_{L^1(\Omega_n)} = 1$ {\rm for}\ $n \in \mathbb{N},\ t \in [0,1]$,
      \item $\|\rho_n(0)-\mu_0\|_{L^q(\Rd)} + \|\rho_n(1)-\mu_1\|_{L^q(\Rd)} \to 0$ {\rm as}\ $n\to\infty$,
      \item $(\rho_n,m(\rho_n)\nabla\phi_n)$ satisfies the continuity equation:
      $$\dt\rho_n(x,t)=-\nabla\cdot(m(\rho_n(x,t))\nabla\phi_n(x,t))\quad \mathrm{for\ all}\ (x,t)\in\Omega_n\times(0,1)$$
      and moreover 
      $$W_m(\mu_0,\mu_1)^2 = \lim_{n\to\infty}\int_0^1\int_{\Omega_n} m(\rho_n(x,t))|\nabla\phi_n(x,t)|^2\, dx\,dt.$$
   \end{enumerate}
\end{lem}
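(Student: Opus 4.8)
The plan is to start from the smooth approximation of the minimizing curve provided by \cite[Lemma 3.6]{CLSS} and to replace its velocity field by a \emph{weighted gradient} through an elliptic projection, keeping the density $\rho_n$ unchanged (this is exactly why $\rho_n$ is shared between the two lemmas). First I would fix a vanishing sequence $b_n\to0$ with $\Omega_{[b_n]}\subset\Omega_n$, which is possible since the $\Omega_n$ form a decreasing family of smooth domains shrinking to $\Omega$. Applying \cite[Lemma 3.6]{CLSS} then furnishes, for each $n$, a nonnegative $\rho_n\in C^\infty(\overline{\Omega}_n\times[0,1])$ and $\boldsymbol{w}_n\in C^\infty(\overline{\Omega}_n\times[0,1];\Rd)$, both supported in $\Omega_{[b_n]}\Subset\Omega_n$, satisfying $\dt\rho_n=-\nabla\cdot\boldsymbol{w}_n$, $\|\rho_n(t)\|_{L^1(\Omega_n)}=1$, the endpoint convergence of item (2), and $\int_0^1\int_{\Omega_n}|\boldsymbol{w}_n|^2/m(\rho_n)\,dx\,dt\to W_m(\mu_0,\mu_1)^2$. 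Since $m$ is only assumed smooth on $(0,\infty)$, I would also arrange $\rho_n>0$ on $\overline{\Omega}_n$ by adding a small constant density $\sigma_n$ and renormalizing the mass (this multiplies $\boldsymbol{w}_n$ by a scalar factor, preserves all the listed properties, and perturbs the action by $o(1)$). After relabeling, $m(\rho_n)\in C^\infty(\overline{\Omega}_n\times[0,1])$ is uniformly positive thanks to $\inf_{r}m(r)>0$, and the compact support of $\boldsymbol{w}_n$ gives $\boldsymbol{w}_n\cdot\boldsymbol{n}=0$ on $\partial\Omega_n$.

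Next, for each $t\in[0,1]$ I would define $\phi_n(\cdot,t)$ as the solution, normalized by $\int_{\Omega_n}\phi_n\,dx=0$, of the Neumann problem
\begin{equation*}
   \nabla\cdot(m(\rho_n)\nabla\phi_n)=\nabla\cdot\boldsymbol{w}_n\ \ \mathrm{in}\ \Omega_n,\qquad \nabla\phi_n\cdot\boldsymbol{n}=0\ \ \mathrm{on}\ \partial\Omega_n.
\end{equation*}
Because $\boldsymbol{w}_n\cdot\boldsymbol{n}=0$ on $\partial\Omega_n$, the compatibility condition $\int_{\Omega_n}\nabla\cdot\boldsymbol{w}_n\,dx=0$ holds, and uniform ellipticity ($m(\rho_n)\ge\inf m>0$) yields a unique such $\phi_n(\cdot,t)$. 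Setting $\tilde{\boldsymbol{w}}_n\coloneq m(\rho_n)\nabla\phi_n$ one has $\nabla\cdot\tilde{\boldsymbol{w}}_n=\nabla\cdot\boldsymbol{w}_n=-\dt\rho_n$, so $(\rho_n,m(\rho_n)\nabla\phi_n)$ satisfies the continuity equation of item (3), while the mass constraint (1) and the endpoint convergence (2) are inherited from $\rho_n$, which is left unchanged. The regularity $\phi_n\in C^\infty(\overline{\Omega}_n\times[0,1])$ with $\nabla\phi_n\cdot\boldsymbol{n}=0$ on $\partial\Omega_n$ would follow from elliptic regularity on the smooth domain $\Omega_n$ together with the smooth dependence of the solution operator on the coefficient $m(\rho_n)$ and on the right-hand side; differentiating the equation in $t$ shows that each $t$-derivative of $\phi_n$ solves an elliptic Neumann problem with smooth data and is therefore smooth in $x$ up to $\partial\Omega_n$.

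Finally, to identify the limit in (3) I would compare $\tilde{\boldsymbol{w}}_n$ with $\boldsymbol{w}_n$. Writing $\boldsymbol{w}_n=m(\rho_n)\nabla\phi_n+\boldsymbol{z}_n$, the defining equation gives $\nabla\cdot\boldsymbol{z}_n=0$ and $\boldsymbol{z}_n\cdot\boldsymbol{n}=0$, hence $\int_{\Omega_n}\nabla\phi_n\cdot\boldsymbol{z}_n\,dx=0$ after integration by parts, so that
\begin{equation*}
   \int_{\Omega_n} m(\rho_n)|\nabla\phi_n|^2\,dx\le\int_{\Omega_n}\frac{|\boldsymbol{w}_n|^2}{m(\rho_n)}\,dx .
\end{equation*}
Integrating in $t$ and using the convergence of the $\boldsymbol{w}_n$-action gives $\limsup_n\int_0^1\int_{\Omega_n}m(\rho_n)|\nabla\phi_n|^2\le W_m(\mu_0,\mu_1)^2$. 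For the reverse bound I would note that $(\rho_n,m(\rho_n)\nabla\phi_n)$ lies in $CE(0,1;\rho_n(0)\to\rho_n(1))$, whence $W_{m,\Omega_n}(\rho_n(0),\rho_n(1))^2\le\int_0^1\int_{\Omega_n}m(\rho_n)|\nabla\phi_n|^2$, and then apply Lemma \ref{lem2.7} with the constant sequence $m_n\equiv m$ and $\Omega_n\to\Omega$, together with $\rho_n(0)\rightharpoonup\mu_0$ and $\rho_n(1)\rightharpoonup\mu_1$ weakly* (a consequence of (2)), to obtain $W_m(\mu_0,\mu_1)^2\le\liminf_n\int_0^1\int_{\Omega_n}m(\rho_n)|\nabla\phi_n|^2$. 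Combining the two bounds yields the limit asserted in (3). The main obstacle I expect is the regularity step: guaranteeing that $\phi_n$ is jointly $C^\infty$ in $(x,t)$ up to the boundary and that the coefficient $m(\rho_n)$ is smooth and uniformly positive, which is precisely why the preliminary adjustment making $\rho_n$ strictly positive is essential; the variational action comparison and the lower-semicontinuity input are comparatively routine once the elliptic projection is in place.
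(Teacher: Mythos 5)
Your proposal is correct and its second half coincides with the paper's own argument: the elliptic Neumann projection of $\boldsymbol{w}_n$ onto weighted gradients (with the compatibility condition coming from $\boldsymbol{w}_n$ vanishing on $\partial\Omega_n$), the bound $\int_{\Omega_n} m(\rho_n)|\nabla\phi_n|^2\,dx \le \int_{\Omega_n}|\boldsymbol{w}_n|^2/m(\rho_n)\,dx$, and the identification of the limit via Lemma \ref{lem2.7} applied to $(\rho_n(0),\rho_n(1))\rightharpoonup(\mu_0,\mu_1)$ are exactly the paper's steps; your orthogonality decomposition $\boldsymbol{w}_n = m(\rho_n)\nabla\phi_n + \boldsymbol{z}_n$ with $\nabla\cdot\boldsymbol{z}_n=0$, $\boldsymbol{z}_n\cdot\boldsymbol{n}=0$ is an equivalent (Pythagoras) variant of the paper's Cauchy--Schwarz estimate obtained by testing the weak formulation with $\phi_n$. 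Where you genuinely diverge is the first half: you invoke \cite[Lemma 3.6]{CLSS} as a black box to produce the smooth pair $(\rho_n,\boldsymbol{w}_n)$, whereas the paper constructs it explicitly by extending the minimizing curve of Lemma \ref{lem2.6} in time, mollifying in space with kernels supported in $B_{b_n}$ (so that supports land in $\Omega_{[b_n]}\subset\Omega_n$), mollifying in time, rescaling, and controlling the action via Jensen's inequality and Lemma \ref{lem2.8}. This is not merely stylistic: the support localization in the shrinking domains $\Omega_n$, the $L^q$ endpoint convergence, and the action bound in this varying-domain setting are precisely what must be checked, and supplying that verification (which \cite{LMS} stated without proof) is the declared purpose of the lemma; if you rely on \cite{CLSS} you must confirm their statement delivers all of these in this exact form. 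Finally, your adjustment $\rho_n \mapsto (\rho_n+\sigma_n)/(1+\sigma_n|\Omega_n|)$ to make $\rho_n$ strictly positive is a sensible extra refinement: since $m$ is only assumed smooth on $(0,\infty)$, smoothness of the coefficient $m(\rho_n)$ (needed for elliptic regularity of $\phi_n$) is not automatic where $\rho_n$ vanishes --- a point the paper glosses over, though it is harmless in its application, where $m_\varepsilon(r)=(r+\varepsilon)^\alpha$ is smooth up to $r=0$.
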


\begin{proof}
   Let $\{\Omega_n\}_n$ be a decreasing sequence of smooth convex bounded sets converging to $\Omega$ as $n\to\infty$.
   Since it is decreasing and bounded, we can find a sequence $\{b_n\}_n$ such that
   $b_n \to 0$ as $n\to\infty$ and $\Omega_{[b_n]} \subset \Omega_n$.
   Let $(\mu_t,\boldsymbol{\nu}_t) \in CE(0,1;\mu_0\to\mu_1)$ 
   be a minimizer of $W_m(\mu_0,\mu_1)$ (Lemma \ref{lem2.6}).
   Let us extend $(\mu_t,\boldsymbol{\nu}_t)$ outside the interval $[0,1]$ by setting 
   $\boldsymbol{\nu}_t=0$ if $t<0$ or $t>1$, 
   and $\mu_t=\mu_0$ if $t<0$,\ $\mu_t=\mu_1$ if $t>1$.
   Then $(\mu_t,\boldsymbol{\nu}_t)$ still satisfies the continuity equation.
   For $\{b_n\}_n$ as the above, 
   let $k_n \in C_c^\infty(\Rd)$ be a nonnegative mollifier such that
   $\mathrm{supp}(k_n) \subset B_{b_n}$.
   Then define measures
   $\mu_t*k_n$ and  
   $\boldsymbol{\nu}_t*k_n$
   which have spatial smooth densities 
   \begin{align*}
      &\tilde{\mu}_{n,t}(x) 
       \coloneq \int_{\Rd} k_n(x-y)\, d\mu_t(y),\\
      &\tilde{\boldsymbol{\nu}}_{n,t}(x) 
       \coloneq \int_{\Rd} k_n(x-y)\, d\boldsymbol{\nu}_t(y).
   \end{align*}
   Note that $\mathrm{supp}(\tilde{\mu}_{n,t}),\ \mathrm{supp}(\tilde{\boldsymbol{\nu}}_{n,t}) \subset \Omega_n$.
   Moreover let $h_{\frac{1}{n}} \in C_c^\infty(\mathbb{R})$ be a nonnegative mollifier in $\mathbb{R}$
   such that $\mathrm{supp}(h_{\frac{1}{n}}) \subset [-\frac{1}{n},\frac{1}{n}]$ 
   and define functions
   \begin{align*}
      &\tilde{\rho}_n(x,t) \coloneq \int_{\mathbb{R}} \tilde{\mu}_{n,z}(x) h_{\frac{1}{n}}(t-z)\, dz
      \quad \mathrm{for}\ (x,t) \in \Rd\times\mathbb{R},\\
      &\tilde{\boldsymbol{w}}_n(x,t) \coloneq \int_{\mathbb{R}} \tilde{\boldsymbol{\nu}}_{n,z}(x) h_{\frac{1}{n}}(t-z)\, dz
      \quad \mathrm{for}\ (x,t) \in \Rd\times\mathbb{R}.
   \end{align*}
   Then $\tilde{\rho}_n \in C^\infty(\Rd\times\mathbb{R})$ and 
   $\tilde{\boldsymbol{w}}_n \in C^\infty(\Rd\times\mathbb{R};\Rd)$,
   in addition, their spatial supports are included in $\Omega_n$.
   Notice that $\tilde{\rho}_n(\cdot,-\frac{1}{n}) = \tilde{\mu}_{n,0}$ 
   and $\tilde{\rho}_n(\cdot,1+\frac{1}{n}) = \tilde{\mu}_{n,1}$.
   Indeed, for all $x \in \Rd$, we have
   \begin{align*}
      \tilde{\rho}_n\left(x,-\frac{1}{n}\right) 
      &= \int_{\mathbb{R}} \tilde{\mu}_{n,z}(x) h_{\frac{1}{n}}\left(-\frac{1}{n}-z\right)\, dz 
       = \int_{-\frac{2}{n}}^0 \tilde{\mu}_{n,z}(x) h_{\frac{1}{n}}\left(-\frac{1}{n}-z\right)\, dz\\
      &= \int_{-\frac{2}{n}}^0 \tilde{\mu}_{n,0}(x) h_{\frac{1}{n}}\left(-\frac{1}{n}-z\right)\, dz
       = \tilde{\mu}_{n,0}(x). 
   \end{align*}
   The other equality can be proved similarly.
   By the convexity of $|a|^2/m(b)$ for each $(a,b) \in \Rd\times(0,\infty)$ 
   and Jensen's inequality, we have 
   $$\frac{|\tilde{\boldsymbol{w}}_n(x,t)|^2}{m(\tilde{\rho}_n(x,t))} 
   \leq \int_{\mathbb{R}} \frac{|\tilde{\boldsymbol{\nu}}_{n,z}(x)|^2}{m(\tilde{\mu}_{n,z}(x))}h_{\frac{1}{n}}(t-z)\, dz
   \quad \mathrm{for}\ (x,t) \in \Rd\times\mathbb{R},$$
   then
   \begin{equation}\label{eq63}
      \int_{\Rd} \frac{|\tilde{\boldsymbol{w}}_n(x,t)|^2}{m(\tilde{\rho}_n(x,t))}\, dx 
      \leq \int_{\Rd}\int_{\mathbb{R}} \frac{|\tilde{\boldsymbol{\nu}}_{n,z}(x)|^2}{m(\tilde{\mu}_{n,z}(x))}h_{\frac{1}{n}}(t-z)\, dz\,dx
      \quad \mathrm{for}\ t \in \mathbb{R}.
   \end{equation}
   Since $\tilde{\boldsymbol{w}}_n(\cdot,t)=0$ if $t<-1/n$ or $t>1+1/n$ 
   and $\tilde{\boldsymbol{w}}_n(x,\cdot) = 0$ if $x \in \Rd\setminus\Omega_n$,
   by \eqref{eq63} and Fubini's theorem, it follows
   \begin{align}\label{eq64}
      \int_{-\frac{1}{n}}^{1+\frac{1}{n}}\int_{\Omega_n} \frac{|\tilde{\boldsymbol{w}}_n(x,t)|^2}{m(\tilde{\rho}_n(x,t))}\, dx\,dt 
      &= \int_{\mathbb{R}}\int_{\Rd} \frac{|\tilde{\boldsymbol{w}}_n(x,t)|^2}{m(\tilde{\rho}_n(x,t))}\, dx\,dt\notag\\
      &\leq \int_{\mathbb{R}}\int_{\Rd}\int_{\mathbb{R}} \frac{|\tilde{\boldsymbol{\nu}}_{n,z}(x)|^2}{m(\tilde{\mu}_{n,z}(x))}h_{\frac{1}{n}}(t-z)\, dz\,dx\,dt\notag\\
      &= \int_{\mathbb{R}}\int_{\Rd} \frac{|\tilde{\boldsymbol{\nu}}_{n,z}(x)|^2}{m(\tilde{\mu}_{n,z}(x))}\, dx\,dz\notag\\
      &\leq W_m(\mu_0,\mu_1)^2,
   \end{align}
   where last inequality is followed by Lemma \ref{lem2.8}.
   Thus we set 
   $$\rho_n(x,t) \coloneq \tilde{\rho}_n\left(x,c_nt-\frac{1}{n}\right),\ 
   \boldsymbol{w}_n(x,t) \coloneq c_n\tilde{\boldsymbol{w}}_n\left(x,c_nt-\frac{1}{n}\right)\quad 
   \mbox{where}\ c_n \coloneq 1+\frac{2}{n}.$$
   Then $\rho_n \in C^\infty(\overline{\Omega}_n\times[0,1]),\ 
   \boldsymbol{w}_n \in C^\infty(\overline{\Omega}_n\times[0,1];\Rd)$ and it holds
   $$\dt\rho_n(x,t)+\nabla\cdot \boldsymbol{w}_n(x,t) = 0\quad (x,t) \in \Omega_n\times(0,1).$$
   By \eqref{eq64} and the change of variables, we have
   \begin{align}\label{eq45}
      \frac{1}{c_n}\int_0^1\int_{\Omega_n} \frac{|\boldsymbol{w}_n(x,t)|^2}{m(\rho_n(x,t))}\, dx\,dt 
      &= \frac{1}{c_n}\int_0^1\int_{\Omega_n} \frac{c_n^2|\tilde{\boldsymbol{w}}_n\left(x,c_nt-\frac{1}{n}\right)|^2}{m\left(\tilde{\rho}_n\left(x,c_nt-\frac{1}{n}\right)\right)}\, dx\,dt\notag\\
      &= \int_{-\frac{1}{n}}^{1+\frac{1}{n}}\int_{\Omega_n} \frac{|\tilde{\boldsymbol{w}}_n(x,t)|^2}{m(\tilde{\rho}_n(x,t))}\, dx\,dt\notag\\
      &\leq W_m(\mu_0,\mu_1)^2.
   \end{align}
   Since $\mu_0, \mu_1 \in L^q\cap\mathcal{P}(\Omega)$, 
   using the property of the mollifier, we have
   $$\rho_n(i) \to \mu_i\ \mathrm{in}\ L^q(\Rd)\quad \mathrm{as}\ n \to \infty\quad i=0,1,$$
   in particular
   $$\rho_n(i) \rightharpoonup \mu_i\ \mathrm{weakly^*\ in}\ \mathcal{M}_{loc}^+(\Rd)\quad \mathrm{as}\ n \to \infty\quad i=0,1.$$
   We infer from Lemma \ref{lem2.7} that 
   $$W_m(\mu_0,\mu_1)^2 
   \leq \liminf_{n\to\infty}W_m(\rho_n(0),\rho_n(1))^2 
   \leq \liminf_{n\to\infty}\int_0^1\int_{\Omega_n} \frac{|\boldsymbol{w}_n|^2}{m(\rho_n)}\, dx\,dt.$$
   Combining the above with \eqref{eq45} and letting $n \to \infty$, we have
   \begin{equation}\label{eq65}
      W_m(\mu_0,\mu_1)^2 
      = \lim_{n\to\infty}W_m(\rho_n(0),\rho_n(1))^2
      = \lim_{n\to\infty}\int_0^1\int_{\Omega_n} \frac{|\boldsymbol{w}_n|^2}{m(\rho_n)}\, dx\,dt.
   \end{equation}

   Next, for fixed $t\in[0,1]$ we consider the following equation:
   \begin{align}\label{ee}
      \begin{cases}
         \nabla\cdot \boldsymbol{w}_n(x,t) 
          = \nabla\cdot(m(\rho_n(x,t))\nabla\phi_n(x)) &x \in \Omega_n,\\
         \nabla\phi_n(x)\cdot\boldsymbol{n} = 0 &x \in \partial\Omega_n.
      \end{cases}
   \end{align}
   Since $\mathrm{supp}(\boldsymbol{w}_n(\cdot,t)) \subset \Omega_n$, then
   $\boldsymbol{w}_n(x,t) = 0$ for $x \in \partial\Omega_n$.
   By the Gauss--Green theorem, it follows
   \begin{equation*}
      \int_{\Omega_n} \nabla\cdot\boldsymbol{w}_n(x,t)\, dx = 0.
   \end{equation*}
   Hence \eqref{ee} has a unique weak solution $\phi_n \in H^1(\Omega_n)$ such that
   \begin{equation}\label{wf}
      \int_{\Omega_n} m(\rho_n)\nabla\phi_n\cdot\nabla \psi\, dx 
      = \int_{\Omega_n} \boldsymbol{w}_n\cdot\nabla \psi\, dx\quad \forall \psi \in H^1(\Omega_n).
   \end{equation}
   Due to the elliptic regurality theorem, $\phi_n$ can be a smooth function 
   and satisfies $\nabla\cdot \boldsymbol{w}_n = \nabla\cdot(m(\rho_n)\nabla\phi_n)$ in $\Omega_n$.
   Taking $\psi=\phi_n$ in \eqref{wf} and using H\"older's inequality, we have
   \begin{equation*}
      \int_{\Omega_n} m(\rho_n)|\nabla\phi_n|^2\, dx 
      = \int_{\Omega_n} \boldsymbol{w}_n\cdot\nabla\phi_n\, dx 
      \leq \left(\int_{\Omega_n} \frac{|\boldsymbol{w}_n|^2}{m(\rho_n)}\, dx\right)^{\frac{1}{2}}\left(\int_{\Omega_n} m(\rho_n)|\nabla\phi_n|^2\, dx\right)^{\frac{1}{2}},
   \end{equation*}
   then
   $$\int_{\Omega_n} m(\rho_n)|\nabla\phi_n|^2\, dx 
   \leq \int_{\Omega_n} \frac{|\boldsymbol{w}_n|^2}{m(\rho_n)}\, dx.$$
   In addition, since it holds 
   $$\dt\rho_n + \nabla\cdot(m(\rho_n)\nabla\phi_n) 
    = \dt\rho_n + \nabla\cdot \boldsymbol{w}_n 
    = 0\quad \mathrm{in}\ \Omega_n\times(0,1),$$
   we have $(\rho_n,m(\rho_n)\nabla\phi_n) \in CE(0,1;\rho_n(0),\rho_n(1))$.
   Combining this with \eqref{eq65}, we can conclude
   \begin{align*}
      W_m(\mu_0,\mu_1)^2 
      &= \lim_{n\to\infty}W_m(\rho_n(0),\rho_n(1))^2 
       \leq \lim_{n\to\infty}\int_0^1\int_{\Omega_n} m(\rho_n)|\nabla\phi_n|^2\, dx\,dt\\
      &\leq \lim_{n\to\infty}\int_0^1\int_{\Omega_n}\frac{|\boldsymbol{w}_n|^2}{m(\rho_n)}\, dx\,dt 
       = W_m(\mu_0,\mu_1)^2,
   \end{align*}
   then
   $$W_m(\mu_0,\mu_1)^2 
   = \lim_{n\to\infty}\int_0^1\int_{\Omega_n} m(\rho_n)|\nabla\phi_n|^2\, dx\,dt.$$
   The proof is completed.
\end{proof}

\subsection{Properties of the functional $\boldsymbol{U}_\varepsilon$ and a boundary estimate}

\quad First, we establish the uniform estimate for the functional $\boldsymbol{U}_\varepsilon$ with respect to $\varepsilon$.
This estimate plays an important role in sections 5 and 6.

\begin{lem}\label{lem2.11}
   Let $p\geq1, 0<\alpha<1$ and 
   $U_\varepsilon : [0,\infty) \to \mathbb{R}$ be a function such that
   $U_\varepsilon^{\prime\prime}(r)m_\varepsilon(r) = 1$ for $r \in [0,\infty)$ 
   and $U_\varepsilon^\prime(0) = U_\varepsilon(0) = 0$, where $m_\varepsilon(r) = (r+\varepsilon)^\alpha$.
   Then setting $\boldsymbol{U}_\varepsilon : L^{p+1-\alpha}\cap\mathcal{P}(\Omega) \to \mathbb{R}$ by
   \begin{align*}
      \boldsymbol{U}_{\varepsilon}(u) \coloneq \int_{\Omega} U_\varepsilon(u(x))\, dx,
   \end{align*}
   it hold $\boldsymbol{U}_\varepsilon \geq 0$ and
   \begin{align}\label{eq41}
      \boldsymbol{U}_\varepsilon(u) \leq \frac{1}{1-\alpha}\|u\|_{L^{2-\alpha}(\Omega)}^{2-\alpha}\quad 
      \mathrm{for}\ u \in L^{p+1-\alpha}\cap\mathcal{P}(\Omega).
   \end{align}
   In addition, $\boldsymbol{U}_{\varepsilon}$ is lower semicontinuous with respect to weak* convergence in $\mathcal{M}_{loc}^+(\Rd)$.
\end{lem}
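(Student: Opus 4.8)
The plan is to integrate the defining ODE to obtain explicit monotonicity and growth information on $U_\varepsilon$, deduce the two pointwise inequalities by comparison, and then treat the lower semicontinuity by a uniform‑integrability argument that exploits the superlinear growth.

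First I would integrate $U_\varepsilon''(r)=m_\varepsilon(r)^{-1}=(r+\varepsilon)^{-\alpha}$ subject to $U_\varepsilon'(0)=U_\varepsilon(0)=0$. One integration gives
$$U_\varepsilon'(r)=\int_0^r (s+\varepsilon)^{-\alpha}\,ds=\frac{(r+\varepsilon)^{1-\alpha}-\varepsilon^{1-\alpha}}{1-\alpha}\ge 0,$$
since $1-\alpha>0$ and the integrand is positive. Because $U_\varepsilon''>0$, the function $U_\varepsilon$ is convex; as $U_\varepsilon'(0)=0$ forces $U_\varepsilon'\ge 0$ on $[0,\infty)$, $U_\varepsilon$ is nondecreasing, and with $U_\varepsilon(0)=0$ it is nonnegative. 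Integrating over $\Omega$ yields $\boldsymbol{U}_\varepsilon(u)\ge 0$.

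For the upper bound I would use the elementary inequality $(s+\varepsilon)^{-\alpha}\le s^{-\alpha}$ for $s>0$ (valid since $\alpha>0$). Integrating twice from $0$ with the prescribed initial data gives
$$U_\varepsilon'(r)\le\int_0^r s^{-\alpha}\,ds=\frac{r^{1-\alpha}}{1-\alpha},\qquad U_\varepsilon(r)\le\frac{r^{2-\alpha}}{(1-\alpha)(2-\alpha)}\le\frac{r^{2-\alpha}}{1-\alpha},$$
the last step using $2-\alpha\ge 1$. Integrating over $\Omega$ produces exactly \eqref{eq41}.

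The main obstacle is the lower semicontinuity, and here I would exploit the superlinear growth of $U_\varepsilon$: from the first display $U_\varepsilon'(r)\to\infty$ as $r\to\infty$, and convexity with $U_\varepsilon(0)=0$ then gives $U_\varepsilon(r)/r\to\infty$. Let $u_n\rightharpoonup u$ weakly* in $\mathcal{M}_{loc}^+(\Rd)$ with $u_n\in L^{p+1-\alpha}\cap\mathcal{P}(\Omega)$, and assume $L:=\liminf_n\boldsymbol{U}_\varepsilon(u_n)<\infty$ (otherwise nothing is to prove); passing to a subsequence realizing $L$, we may take $\boldsymbol{U}_\varepsilon(u_n)\le L+1$. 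Since $\Omega$ is bounded and $U_\varepsilon$ is superlinear, the de la Vall\'ee-Poussin criterion shows that $\{u_n\}$ is uniformly integrable, hence relatively weakly compact in $L^1(\Omega)$ by Dunford-Pettis; along a further subsequence $u_n\rightharpoonup w$ weakly in $L^1(\Omega)$. Weak $L^1$ convergence tested against $C_c(\Rd)$ (which lies in $L^\infty(\Omega)$ as $\Omega$ is bounded) implies weak* convergence of the associated measures, so by uniqueness of limits the weak* limit is $w\mathscr{L}^d$; in particular it is absolutely continuous and its density is $u=w$. Finally, $U_\varepsilon$ being convex and $\boldsymbol{U}_\varepsilon$ being strongly $L^1$-lower semicontinuous (Fatou, using $U_\varepsilon\ge 0$ and continuous), convexity upgrades this to weak $L^1$-lower semicontinuity, whence $\boldsymbol{U}_\varepsilon(u)=\boldsymbol{U}_\varepsilon(w)\le\liminf_n\boldsymbol{U}_\varepsilon(u_n)=L$. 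The delicate points are the identification of the weak* limit with the weak-$L^1$ limit and the passage from strong to weak lower semicontinuity via convexity; both rest on the superlinearity of $U_\varepsilon$ read off from the explicit form of $U_\varepsilon'$.
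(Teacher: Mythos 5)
Your argument is correct. For the nonnegativity and the bound \eqref{eq41} you follow essentially the paper's route, though your derivation of the upper bound is a bit cleaner: the paper writes $U_\varepsilon$ out explicitly and then combines the mean value theorem with the inequality $a^\beta-b^\beta\le|a-b|^\beta$, whereas you simply integrate the pointwise comparison $(s+\varepsilon)^{-\alpha}\le s^{-\alpha}$ twice, which even gives the sharper constant $\tfrac{1}{(1-\alpha)(2-\alpha)}$ before relaxing it to $\tfrac{1}{1-\alpha}$. The genuine divergence is in the lower semicontinuity. The paper disposes of it in one line by citing the general theorem on weak* lower semicontinuity of convex integral functionals on measures (\cite[Theorem 2.3.4]{AFP}), where the superlinearity of $U_\varepsilon$ is what prevents singular parts of weak* limits from spoiling the inequality. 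You instead give a self-contained proof: de la Vall\'ee-Poussin and Dunford--Pettis extract from a subsequence realizing the $\liminf$ a weak $L^1(\Omega)$ limit $w$; testing against $C_c(\Rd)$ identifies $w\mathscr{L}^d$ with the weak* limit (so in particular the limit measure is absolutely continuous); and Fatou plus convexity (Mazur) upgrade strong $L^1$ lower semicontinuity of $\boldsymbol{U}_\varepsilon$ to weak $L^1$ lower semicontinuity. Your route avoids the AFP machinery and actually proves slightly more --- any weak* limit of a sequence with bounded $\boldsymbol{U}_\varepsilon$-energy is automatically absolutely continuous with $\boldsymbol{U}_\varepsilon$-finite density --- at the cost of a longer argument; the paper's citation is shorter but hides exactly the superlinearity mechanism that your proof makes explicit.
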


\begin{proof}
   First, the function $U_{\varepsilon}$ can be explicitly represented as
   $$U_{\varepsilon}(r) 
   = \frac{1}{(2-\alpha)(1-\alpha)}[(r+\varepsilon)^{2-\alpha} 
   - \varepsilon^{2-\alpha}] - \frac{\varepsilon^{1-\alpha}}{1-\alpha}r.$$
   Since $U_\varepsilon^{\prime\prime} \geq 0$ and $U_\varepsilon^\prime(0) = U_\varepsilon(0) = 0$, 
   we see $U_\varepsilon \geq 0$ and then $\boldsymbol{U}_\varepsilon \geq 0$.
   Using the mean value theorem and the inequality
   \begin{equation}\label{eq43}
      a^\beta - b^\beta \leq |a-b|^\beta\quad \mathrm{for}\ a, b \geq0,\ 0<\beta<1,
   \end{equation}
   we have for $r \in [0,\infty)$,
   \begin{align*}
      U_\varepsilon(r) 
      &= \frac{1}{(2-\alpha)(1-\alpha)}[(r+\varepsilon)^{2-\alpha} 
      - \varepsilon^{2-\alpha}] - \frac{\varepsilon^{1-\alpha}}{1-\alpha}r\\
      &\leq \frac{1}{1-\alpha}r(r+\varepsilon)^{1-\alpha} - \frac{1}{1-\alpha}r\varepsilon^{1-\alpha}\\
      &= \frac{r}{1-\alpha}[(r+\varepsilon)^{1-\alpha} - \varepsilon^{1-\alpha}]
       \leq \frac{1}{1-\alpha}r^{2-\alpha},
   \end{align*}
   then 
   \begin{equation*}
      \boldsymbol{U}_\varepsilon(u) \leq \frac{1}{1-\alpha}\|u\|_{L^{2-\alpha}(\Omega)}^{2-\alpha}\quad
      \mathrm{for}\ u \in L^{p+1-\alpha}\cap \mathcal{P}(\Omega).
   \end{equation*}
   Finally, since the function $U_\varepsilon$ is convex and continuous, 
   by \cite[Theorem 2.3.4]{AFP}, $\boldsymbol{U}_\varepsilon$ is lower semicontinuous 
   with respect to weak* convergence in $\mathcal{M}_{loc}^+(\Rd)$.
\end{proof}

\quad Next lemma is about the estimate on the smooth boundary of convex domain (\cite[Lemma 5.1]{GST}).
Due to using this lemma in Lemma \ref{lem4.5}, 
we need to assume that domain $\Omega$ is convex.

\begin{lem}\label{lemA.2}
   Let $\Omega$ be a smooth convex set in $\Rd$ and 
   $\varphi \in C^3(\overline{\Omega})$ with $\nabla\varphi\cdot\boldsymbol{n} = 0$ on $\partial\Omega$.
   Then
   \begin{equation*}
      \nabla^2\varphi\nabla\varphi\cdot\boldsymbol{n} = \sum_{i,j=1}^d \partial_{ij}^2\varphi\, \partial_i\varphi\, n_j \leq 0
      \quad \mathrm{on}\ \partial\Omega,
   \end{equation*}
   where $\nabla^2\varphi$ is the Hessian matrix and 
   $\boldsymbol{n} = (n_1,\cdots,n_d)$ is the outer unit normal vector to $\partial\Omega$.
\end{lem}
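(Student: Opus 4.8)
The plan is to argue pointwise on $\partial\Omega$, using that the Neumann-type condition $\nabla\varphi\cdot\boldsymbol{n}=0$ forces $\nabla\varphi$ to be tangent to $\partial\Omega$, and then differentiating the identity $\nabla\varphi\cdot\boldsymbol{n}\equiv0$ in the tangential direction $\nabla\varphi$ to convert the Hessian term into the second fundamental form of $\partial\Omega$, which is nonnegative by convexity.

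Fix a point $x_0\in\partial\Omega$. Since $\Omega$ has smooth boundary, the outer unit normal field extends to a $C^1$ vector field $\boldsymbol{n}$ on a neighborhood of $x_0$; concretely one may take $\boldsymbol{n}=\nabla d$, where $d$ is the signed distance to $\partial\Omega$ (negative inside $\Omega$), so that $|\boldsymbol{n}|\equiv1$ near $\partial\Omega$ and $\partial_i n_k=\partial_{ik}^2 d$. Put $g\coloneq\nabla\varphi\cdot\boldsymbol{n}=\sum_k\partial_k\varphi\,n_k$, which vanishes identically on $\partial\Omega$ by hypothesis. In particular $g(x_0)=0$ says that $\nabla\varphi(x_0)$ is orthogonal to $\boldsymbol{n}(x_0)$, i.e. $\nabla\varphi(x_0)$ is a tangent vector of $\partial\Omega$ at $x_0$. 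Because $g\equiv0$ along $\partial\Omega$, its derivative in any tangential direction vanishes; applying this in the direction $\nabla\varphi(x_0)$ gives, at $x_0$,
\[
   0=\sum_i\partial_i\varphi\,\partial_i g
   =\sum_{i,k}\partial_i\varphi\bigl(\partial_{ik}^2\varphi\,n_k+\partial_k\varphi\,\partial_i n_k\bigr),
\]
so that, after relabeling the summation index,
\[
   \sum_{i,j=1}^d\partial_{ij}^2\varphi\,\partial_i\varphi\,n_j
   =-\sum_{i,k=1}^d\partial_i\varphi\,\partial_k\varphi\,\partial_i n_k
   \qquad\text{at }x_0.
\]

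It therefore suffices to show that the quadratic form $\sum_{i,k}\partial_i\varphi\,\partial_k\varphi\,\partial_i n_k$ is nonnegative when evaluated on the tangent vector $\nabla\varphi(x_0)$. The matrix $(\partial_i n_k)=(\partial_{ik}^2 d)$ is symmetric, annihilates the normal direction (differentiating $|\nabla d|^2\equiv1$ yields $\nabla^2 d\,\nabla d=0$), and its restriction to the tangent space $T_{x_0}\partial\Omega$ is exactly the second fundamental form of $\partial\Omega$ with respect to the outer normal. For a convex domain this form is nonnegative definite, so $\sum_{i,k}\partial_i\varphi\,\partial_k\varphi\,\partial_i n_k\ge0$ because $\nabla\varphi(x_0)$ is tangential. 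Hence $\sum_{i,j}\partial_{ij}^2\varphi\,\partial_i\varphi\,n_j\le0$ at $x_0$, and since $x_0\in\partial\Omega$ was arbitrary the claim follows.

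The computation itself is routine; the real content is the geometric input, namely that $(\partial_i n_k)$ restricted to tangent vectors is the nonnegative second fundamental form of the convex boundary. This is where convexity of $\Omega$ is essential and cannot be dropped. One can make it precise either by choosing a local orthonormal frame of principal directions at $x_0$, in which $(\partial_i n_k)$ becomes $\mathrm{diag}(\kappa_1,\dots,\kappa_{d-1},0)$ with principal curvatures $\kappa_i\ge0$, or by noting that the convexity of the sublevel sets $\{d\le c\}$ forces $\nabla^2 d\ge0$ on the tangent space. A minor additional point is to confirm that the smoothness of $\partial\Omega$ together with $\varphi\in C^3(\overline{\Omega})$ (indeed $C^2$ would do) guarantees that $g$ is tangentially differentiable, which legitimizes the displayed identity; this is automatic from the stated regularity.
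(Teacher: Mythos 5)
Your proof is correct. Note that the paper itself gives no proof of this lemma at all: it is stated as a quotation of \cite[Lemma 5.1]{GST}, so there is no internal argument to compare against. What you wrote is essentially the standard proof, and it is the same argument as in the cited reference: since $g=\nabla\varphi\cdot\boldsymbol{n}$ vanishes identically on $\partial\Omega$ and $\nabla\varphi(x_0)$ is tangential, differentiating $g$ along the boundary in the direction $\nabla\varphi(x_0)$ converts the Hessian term into $-\nabla^2 d(\nabla\varphi,\nabla\varphi)$, and the nonnegativity of $\nabla^2 d$ (equivalently, of the second fundamental form with respect to the outer normal) for a convex domain gives the sign. Your two justifications of that geometric input (principal-curvature frame, or convexity of the signed distance function to a convex set) are both sound, and your remark that $\varphi\in C^2(\overline{\Omega})$ already suffices is also correct, since only second derivatives of $\varphi$ and first derivatives of $\boldsymbol{n}$ enter.
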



\section{Existence of minimizers}

\quad In this section, we consider the following discrete scheme:

let $X \coloneq (L^{p+1-\alpha}\cap\mathcal{P}(\Omega))\times H^1(\Omega)$ 
and $m_\varepsilon(r) = (r+\varepsilon)^\alpha$ for $r\geq0$.
For a fixed time step $\tau>0$, 
\begin{align}\label{mms}
      \mbox{find}\ (u_{\tau}^k,v_\tau^k) \in X\ \mbox{satisfying}\
      F_{\tau}(u_{\tau}^k,v_\tau^k) = \inf_{(u,v)\in X}F_{\tau}(u,v)\
      \mbox{for each}\ k \in \mathbb{N},\
\end{align}
where $(u_\tau^0,v_\tau^0) = (u_0,v_0)$,
\begin{equation}\label{Ftau}
   F_{\tau}(u,v) \coloneq 
   \frac{1}{2\tau}\left(\frac{W_{m_{\varepsilon}}(u,u_{\tau}^{k-1})^2}{\chi} 
   + \|v-v_\tau^{k-1}\|_{L^2(\Omega)}^2\right) + E(u,v)\quad (u,v) \in X,
\end{equation}
and $W_{m_{\varepsilon}}$ is the weighted Wasserstein distance 
on the space of nonnegative Radon measures 

$\mathcal{M}^+(\Omega)$ (see Definition \ref{dfn2.4}).
Notice that $\mathcal{P}(\Omega) \subset \mathcal{M}^+(\Omega)$.
In order to show the existence of minimizers, 
we apply direct method, then we check that the functional $F_\tau$ is bounded below in $X$,
the sublevel set of $F_\tau$ is relatively compact in $X$ 
and $F_\tau$ is lower semicontinuous with respect to weak topology in $X$. 

\subsection{Boundedness from below of the functional $F_{\tau}$}

\begin{lem}\label{lem3.1}
   Let $p \geq 1+\alpha-2/d$ and assume that $\chi>0$ is small enough if $p=1+\alpha-2/d$.
   Then the energy functional $E$ is bounded below in $X$.
   In particular the functional $F_\tau$ is also bounded below.
\end{lem}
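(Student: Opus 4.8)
The plan is to produce a lower bound for $E$ alone, since the penalization term in $F_\tau$ is nonnegative: indeed $F_\tau(u,v) = \frac{1}{2\tau}\big(\chi^{-1}W_{m_\varepsilon}(u,u_\tau^{k-1})^2 + \|v - v_\tau^{k-1}\|_{L^2(\Omega)}^2\big) + E(u,v) \ge E(u,v)$ because $\chi>0$ and both squared quantities are nonnegative. Writing $r := p+1-\alpha$ and $A := \frac{p}{\chi(p-\alpha)(p+1-\alpha)} > 0$, we have $E(u,v) = A\|u\|_{L^r(\Omega)}^r - \int_\Omega uv\,dx + \frac12\|v\|_{H^1(\Omega)}^2$, where the first and third terms are nonnegative. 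The entire difficulty is therefore to absorb the cross term $\int_\Omega uv\,dx$ into these two, up to an additive constant.

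First I would estimate the cross term by Hölder against a Sobolev-admissible exponent for $v$: taking the conjugate pair $(q_0,q_0')$ with $q_0' = 2^* = \tfrac{2d}{d-2}$ (hence $q_0 = \tfrac{2d}{d+2}$) when $d\ge3$, and any finite $q_0'<\infty$ (hence $q_0>1$) when $d=2$, the embedding $H^1(\Omega)\hookrightarrow L^{q_0'}(\Omega)$ gives $\int_\Omega uv \le \|u\|_{L^{q_0}}\|v\|_{L^{q_0'}} \le C_S\|u\|_{L^{q_0}}\|v\|_{H^1}$. One checks that $1 \le q_0 \le r$ holds precisely under the hypothesis $p \ge 1+\alpha-\tfrac2d$, since $\tfrac{2d}{d+2} \le 2-\tfrac2d \le r$ for $d\ge2$. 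I would then interpolate, using the mass constraint $\|u\|_{L^1(\Omega)}=1$ coming from $u\in\mathcal{P}(\Omega)$, to get $\|u\|_{L^{q_0}} \le \|u\|_{L^1}^{1-\lambda}\|u\|_{L^r}^{\lambda} = \|u\|_{L^r}^{\lambda}$, where $\tfrac{1}{q_0} = (1-\lambda) + \tfrac{\lambda}{r}$. Young's inequality then yields $\int_\Omega uv \le \tfrac14\|v\|_{H^1}^2 + C\|u\|_{L^r}^{2\lambda}$.

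The decisive computation is that $2\lambda = \frac{(d-2)r}{d(r-1)}$, so that $2\lambda < r \iff r > 2-\tfrac2d \iff p > 1+\alpha-\tfrac2d$, while $2\lambda = r$ exactly in the critical case $p = 1+\alpha-\tfrac2d$. In the subcritical regime $2\lambda<r$ lets me apply Young once more to obtain $C\|u\|_{L^r}^{2\lambda} \le \tfrac{A}{2}\|u\|_{L^r}^r + C'$, whence $E \ge \tfrac{A}{2}\|u\|_{L^r}^r + \tfrac14\|v\|_{H^1}^2 - C' \ge -C'$, with no restriction on $\chi$. In the critical case the term is exactly $C\|u\|_{L^r}^r$, giving $E \ge (A-C)\|u\|_{L^r}^r + \tfrac14\|v\|_{H^1}^2$, which is bounded below (indeed nonnegative) as soon as $A \ge C$; since $A\to\infty$ as $\chi\to0$ and $C$ depends only on the Sobolev constant, this holds for $\chi$ small enough, matching the hypothesis.

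The main obstacle is the critical threshold: everything hinges on the sharp matching $2\lambda = r \iff p = 1+\alpha-\tfrac2d$, so the interpolation exponents must be tracked exactly, and in that borderline case boundedness genuinely requires the coefficient $A$ (equivalently $\chi^{-1}$) to dominate the Sobolev constant, which is the source of the smallness assumption on $\chi$. A secondary technical point is the endpoint $d=2$, where $2^*=\infty$ is inadmissible and one must take a finite $q_0'$; since $p=1+\alpha-\tfrac2d$ would force $p=\alpha<1$ there, the case $d=2$ is always subcritical and the finite-exponent version suffices.
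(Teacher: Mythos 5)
Your proof is correct and follows essentially the same route as the paper: H\"older's inequality with a Sobolev-admissible exponent, interpolation against the unit $L^1$ mass, and Young's inequality, with the same critical-exponent matching $2\lambda = r \iff p = 1+\alpha-2/d$ forcing the smallness of $\chi$ at criticality, and the same finite-exponent substitute for $2^*$ when $d=2$. The only cosmetic difference is that you handle all $p \ge 1+\alpha-2/d$ in one unified interpolation computation, whereas the paper splits off the case $p \ge 1+\alpha$, where the simpler $L^2$--$L^2$ H\"older estimate suffices.
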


\begin{proof}
   If $p\geq1+\alpha$ then $p+1-\alpha\geq2$.
   Thus we infer from H\"older's inequality and the interpolation inequality that
   \begin{align*}
      \|uv\|_{L^1(\Omega)} 
      &\leq \|u\|_{L^2(\Omega)}\|v\|_{L^2(\Omega)}\\
      &\leq \|u\|_{L^{p+1-\alpha}(\Omega)}^{\frac{p+1-\alpha}{2(p-\alpha)}}\|u\|_{L^1(\Omega)}^\frac{p-1-\alpha}{2(p-\alpha)}\|v\|_{H^1(\Omega)}\\
      &= \|u\|_{L^{p+1-\alpha}(\Omega)}^{\frac{p+1-\alpha}{2(p-\alpha)}}\|v\|_{H^1(\Omega)}.
   \end{align*}
   Note that $(p+1-\alpha)/(p-\alpha) \leq p+1-\alpha$ because of $p\geq1+\alpha$.

   On the other hand, when $1+\alpha-2/d < p < 1+\alpha$, 
   since $d\geq2$, it follows
   $$p>1+\alpha-\frac{2}{d}\geq1+\alpha-\frac{4}{d+2},$$
   then
   $$p+1-\alpha > \frac{2d}{d+2}.$$
   When $d=2$, we can choose $q \in (1,p+1-\alpha)$ satisfying $q<2/(2-p+\alpha)$ and fix it.
   By H\"older's inequality, the interpolation inequality and the Sobolev embedding,
   we have for $(u,v) \in X$,
   \begin{align*}
      \|uv\|_{L^1(\Omega)} 
      &\leq \|u\|_{L^{q}(\Omega)}\|v\|_{L^{q^*}(\Omega)}\\
      &\leq \|u\|_{L^{p+1-\alpha}(\Omega)}^{\theta_2}\|u\|_{L^1(\Omega)}^{1-\theta_2}\|v\|_{L^{q^*}(\Omega)}\\
      &\leq C\|u\|_{L^{p+1-\alpha}(\Omega)}^{\theta_2}\|v\|_{H^1(\Omega)},
   \end{align*}
   where $C$ is a constant and 
   \begin{equation*}
      q^* = \frac{q}{q-1},\quad
      \theta_2 \coloneq \frac{(p+1-\alpha)(q-1)}{(p-\alpha)q} \in (0,1).
   \end{equation*}
   Note that $2\theta_2 < p+1-\alpha$ since $q<2/(2-p+\alpha)$.
   On the other hand, when $d\geq3$, we infer from the similar estimate that 
   \begin{align*}
      \|uv\|_{L^1(\Omega)} 
      &\leq \|u\|_{L^{\frac{2d}{d+2}}(\Omega)}\|v\|_{L^{\frac{2d}{d-2}}(\Omega)}\\
      &\leq \|u\|_{L^{p+1-\alpha}(\Omega)}^{\theta_d}\|u\|_{L^1(\Omega)}^{1-\theta_d}\|v\|_{L^{\frac{2d}{d-2}}(\Omega)}\\
      &\leq C\|u\|_{L^{p+1-\alpha}(\Omega)}^{\theta_d}\|v\|_{H^1(\Omega)},
   \end{align*}
   where 
   \begin{align*}
      \theta_d \coloneq \frac{(p+1-\alpha)(d-2)}{(p-\alpha)2d} \in (0,1).
   \end{align*}
   Observe that $2\theta_d < p+1-\alpha$ because of $p>1+\alpha-2/d$.
   Hence, when $p>1+\alpha-2/d$ and $d\geq2$, by Young's inequality, it follows
   \begin{align}\label{eq30}
      \|uv\|_{L^1(\Omega)}
      &\leq \frac{1}{\chi(p+1-\alpha)}\|u\|_{L^{p+1-\alpha}(\Omega)}^{p+1-\alpha} 
       + \frac{1}{4}\|v\|_{H^1(\Omega)}^2 + C(\alpha,p,d,\chi),
   \end{align}
   where $C(\alpha,p,d,\chi)$ is a constant.
   Then for $(u,v) \in X$, we obtain
   \begin{align}\label{eq22}
      E(u,v) 
      &\geq \frac{p}{\chi(p-\alpha)(p+1-\alpha)}\|u\|_{L^{p+1-\alpha}(\Omega)}^{p+1-\alpha} 
       - \frac{1}{\chi(p+1-\alpha)}\|u\|_{L^{p+1-\alpha}(\Omega)}^{p+1-\alpha}\notag\\
      &\quad - \frac{1}{4}\|v\|_{H^1(\Omega)}^2 + \frac{1}{2}\|\nabla v\|_{L^2(\Omega)}^2 
       + \frac{1}{2}\|v\|_{L^2(\Omega)}^2 - C(\alpha,p,d,\chi)\notag\\
      &\geq \frac{\alpha}{\chi(p-\alpha)(p+1-\alpha)}\|u\|_{L^{p+1-\alpha}(\Omega)}^{p+1-\alpha} 
       + \frac{1}{4}\|v\|_{H^1(\Omega)}^2 
       - C(\alpha,p,d,\chi)\\
      &\geq -C(\alpha,p,d,\chi)> -\infty.\notag
   \end{align}
   If $p=1+\alpha-2/d$ then $p+1-\alpha = 2 - 2/d$ and $\theta_d = 1 - 1/d$.
   Hence by the same argument in the above, it follows
   \begin{align}\label{eq89}
      \|uv\|_{L^1(\Omega)} 
      &\leq C\|u\|_{L^{2-\frac{2}{d}}(\Omega)}^{1-\frac{1}{d}}\|v\|_{H^1(\Omega)}\notag\\
      &\leq C\|u\|_{L^{2-\frac{2}{d}}(\Omega)}^{2-\frac{2}{d}} + \frac{1}{4}\|v\|_{H^1(\Omega)}^2.
   \end{align}
   Then for $(u,v) \in X$, we obtain
   \begin{align}\label{eq88}
      E(u,v)
      &\geq \frac{1+\alpha-\frac{2}{d}}{\chi\left(1-\frac{2}{d}\right)\left(2-\frac{2}{d}\right)}\|u\|_{L^{2-\frac{2}{d}}(\Omega)}^{2-\frac{2}{d}}
       - C\|u\|_{L^{2-\frac{2}{d}}(\Omega)}^{2-\frac{2}{d}}\notag\\
      &\quad - \frac{1}{4}\|v\|_{H^1(\Omega)}^2 + \frac{1}{2}\|\nabla v\|_{L^2(\Omega)}^2 
       + \frac{1}{2}\|v\|_{L^2(\Omega)}^2\notag\\
      &\geq \frac{1}{\chi}\left[\frac{1+\alpha-\frac{2}{d}}{\left(1-\frac{2}{d}\right)\left(2-\frac{2}{d}\right)} - \chi C\right]\|u\|_{L^{2-\frac{2}{d}}(\Omega)}^{2-\frac{2}{d}}
       + \frac{1}{4}\|v\|_{H^1(\Omega)}^2.
   \end{align}
   If $\chi>0$ is small enough then the first term of the right hand side is positive and $E$ is bounded below in $X$.
   Since $\chi>0$, $W_{m_{\varepsilon}}\geq0$ and $\|\cdot\|_{L^2(\Omega)}\geq0$, 
   $F_{\tau}$ is also bounded below.
   We complete the proof.
\end{proof}

\subsection{Compactness}

\begin{lem}\label{lem3.2}
   Let $\{(u_n,v_n)\}_{n\in\mathbb{N}}$ be a minimizing sequence of $F_\tau$ in $X$.
   Then there exist a subsequence $\{(u_{n_l},v_{n_l})\}_{l\in\mathbb{N}}$ and a pair of functions $(u,v) \in X$ such that
   \begin{align*}
      &u_{n_l} \rightharpoonup u\quad \mathrm{weakly\ in}\ L^1\cap L^{p+1-\alpha}(\Omega)\ \mathrm{as}\ l \to \infty,\\
      &v_{n_l} \rightharpoonup v\quad \mathrm{weakly\ in}\ H^1(\Omega)\ \mathrm{as}\ l \to \infty,\\
      &v_{n_l} \to v\quad \mathrm{strongly\ in}\ L^q(\Omega)\ \mathrm{as}\ l \to \infty\ \mathrm{for\ all}\ q \in [2,2^*),
   \end{align*}
   where 
   \begin{align*}
      2^* \coloneq 
      \begin{cases}
         \infty\quad &\mathrm{if}\ d=2,\\
         \frac{2d}{d-2} &\mathrm{if}\ d\geq3.
      \end{cases}
   \end{align*}
\end{lem}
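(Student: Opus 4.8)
The plan is to combine the boundedness of $F_\tau$ along the minimizing sequence with the coercivity of the energy $E$ established in Lemma~\ref{lem3.1}. Since $F_\tau$ is bounded below (Lemma~\ref{lem3.1}) and $\{(u_n,v_n)\}$ is minimizing, $F_\tau(u_n,v_n)$ converges to the finite infimum of $F_\tau$; in particular $M:=\sup_n F_\tau(u_n,v_n)<\infty$. Because $\chi>0$, $W_{m_\varepsilon}\ge0$ and $\|\cdot\|_{L^2(\Omega)}\ge0$, the two metric terms in \eqref{Ftau} are nonnegative, so $E(u_n,v_n)\le M$ for every $n$. First I would feed this into the lower bound \eqref{eq22} in the case $p>1+\alpha-2/d$, or into \eqref{eq88} in the critical case $p=1+\alpha-2/d$ (where smallness of $\chi$ makes the bracketed coefficient positive). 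In either regime the resulting inequality reads, for suitable positive constants,
\begin{equation*}
   c_1\|u_n\|_{L^{p+1-\alpha}(\Omega)}^{p+1-\alpha} + \frac{1}{4}\|v_n\|_{H^1(\Omega)}^2 \le M + C(\alpha,p,d,\chi),
\end{equation*}
whence $\{u_n\}$ is bounded in $L^{p+1-\alpha}(\Omega)$ and $\{v_n\}$ is bounded in $H^1(\Omega)$, uniformly in $n$.

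Next I would extract the convergent subsequences. Since $p+1-\alpha>1$ for all admissible $(p,\alpha,d)$ (indeed $p+1-\alpha\ge2-2/d>1$ for $d\ge3$, and the critical $d=2$ case is excluded), the space $L^{p+1-\alpha}(\Omega)$ is reflexive, so the bounded sequence $\{u_n\}$ has a subsequence with $u_{n_l}\rightharpoonup u$ weakly in $L^{p+1-\alpha}(\Omega)$. As $\Omega$ is bounded we have $L^\infty(\Omega)\subset (L^{p+1-\alpha}(\Omega))'$, so testing the weak convergence against any $g\in L^\infty(\Omega)=(L^1(\Omega))'$ gives $\int_\Omega u_{n_l}g\,dx\to\int_\Omega ug\,dx$; that is, $u_{n_l}\rightharpoonup u$ also weakly in $L^1(\Omega)$ with the same limit. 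By reflexivity of $H^1(\Omega)$ I pass, along a further subsequence, to $v_{n_l}\rightharpoonup v$ weakly in $H^1(\Omega)$, and the Rellich--Kondrachov embedding $H^1(\Omega)\hookrightarrow\hookrightarrow L^q(\Omega)$, valid for every $q\in[1,2^*)$, upgrades this to $v_{n_l}\to v$ strongly in $L^q(\Omega)$ for all $q\in[2,2^*)$.

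It remains to verify that the limit lies in the constraint set, i.e.\ $(u,v)\in X$ with $u\in\mathcal{P}(\Omega)$. Nonnegativity is preserved in the limit: the cones $\{w\ge0\}$ are convex and strongly closed, hence weakly closed, in $L^{p+1-\alpha}(\Omega)$ and in $H^1(\Omega)$ respectively; for $v$ one may instead pass to an a.e.\ convergent subsequence extracted from the strong $L^2$ convergence. Finally, testing the weak $L^1$ convergence against the constant $1\in L^\infty(\Omega)$ yields $\int_\Omega u\,dx=\lim_l\int_\Omega u_{n_l}\,dx=1$, so $u\in L^{p+1-\alpha}\cap\mathcal{P}(\Omega)$ and $v\in H^1(\Omega)$, giving $(u,v)\in X$.

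I expect the argument to be essentially routine. The only points demanding genuine care are the simultaneous weak $L^1$ and $L^{p+1-\alpha}$ convergence of $u_{n_l}$ (which relies on boundedness of $\Omega$ to embed $L^\infty$ into the relevant dual) and the verification that both the mass normalization $\|u\|_{L^1(\Omega)}=1$ and nonnegativity survive the passage to the weak limit, so that the limit genuinely belongs to the constrained space $X$. Handling the subcritical and critical exponent regimes uniformly through the two coercivity estimates \eqref{eq22} and \eqref{eq88} is a minor bookkeeping matter and should not present a substantial obstacle.
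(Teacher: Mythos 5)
Your proof is correct, and it follows the paper's own structure for the coercivity step (bounding $E(u_n,v_n)$ by the infimum plus the nonnegativity of the metric terms, then invoking \eqref{eq22} or \eqref{eq88}), the weak $L^{p+1-\alpha}$/$H^1$ compactness, and the Rellich--Kondrachov upgrade for $v_{n_l}$. The genuine difference is how you obtain the weak $L^1$ convergence of $u_{n_l}$. The paper proves equi-integrability of $\{u_{n_l}\}$ from the uniform $L^{p+1-\alpha}$ bound via the tail estimate $\int_{\{u_{n_l}\ge c\}}u_{n_l}\,dx\le c^{-(p-\alpha)}\int_\Omega u_{n_l}^{p+1-\alpha}\,dx$, invokes the Dunford--Pettis theorem to extract a further subsequence with a weak $L^1$ limit $\tilde u$, and then identifies $\tilde u=u$ by testing against $C_c^\infty(\Omega)$. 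You instead observe that, since $\Omega$ is bounded, $L^\infty(\Omega)\subset L^{\frac{p+1-\alpha}{p-\alpha}}(\Omega)=(L^{p+1-\alpha}(\Omega))'$, so every $g\in L^\infty(\Omega)=(L^1(\Omega))'$ is already an admissible test function for the weak $L^{p+1-\alpha}$ convergence; hence the same subsequence converges weakly in $L^1(\Omega)$ to the same limit $u$, with no appeal to Dunford--Pettis and no extra subsequence or identification step. Your route is shorter and more elementary; the paper's route is the one that would still work if one only had an equi-integrability bound rather than a genuine higher-integrability bound, but here that generality is not needed. Two further points in your favour: you make explicit that nonnegativity of $u$ passes to the weak limit (the cone $\{w\ge0\}$ is convex and strongly closed, hence weakly closed), which the paper leaves implicit when concluding $u\in\mathcal{P}(\Omega)$, and you recover the mass constraint by testing against $1\in L^\infty(\Omega)$, exactly as the paper does. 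One cosmetic remark: the cleanest justification of $p+1-\alpha>1$ is simply $p\ge1>\alpha$, which covers all cases including $d=2$, rather than the bound $p+1-\alpha\ge2-2/d$.
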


\begin{proof}
   Let $\{(u_n,v_n)\}\subset X$ be a minimizing sequence of $F_{\tau}$, 
   that is, $F_{\tau}(u_n,v_n)$ is bounded in $\mathbb{R}$.
   Combining this with the estimate \eqref{eq22} or \eqref{eq88}, 
   we get the boundedness of $\|u_n\|_{L^{p+1-\alpha}(\Omega)}$ and 
   $\|v_n\|_{H^1(\Omega)}$:
   there exists a constant $C = C(\alpha,p,d,\chi)$ such that 
   \begin{align}\label{eq23}
      &\|u_n\|_{L^{p+1-\alpha}(\Omega)}^{p+1-\alpha} \leq C,\\
      &\|v_n\|_{H^1(\Omega)}^2 \leq C.\notag
   \end{align}
   Since $p+1-\alpha > 1$, by the Banach--Alaoglu theorem, 
   there exist subsequences $\{u_{n_l}\}_{l\in\mathbb{N}}, \{v_{n_l}\}_{l\in\mathbb{N}}$ 
   and functions $u \in L^{p+1-\alpha}(\Omega), v \in H^1(\Omega)$ such that
   \begin{align}\label{eq24}
      &u_{n_l} \rightharpoonup u\quad \mathrm{weakly\ in}\ L^{p+1-\alpha}(\Omega)\ \mathrm{as}\ l \to \infty,\\
      &v_{n_l} \rightharpoonup v\quad \mathrm{weakly\ in}\ H^1(\Omega)\ \mathrm{as}\ l \to \infty.\notag
   \end{align}
   Moreover, by the Rellich--Kondrachov theorem, 
   we can take a subsequence, still denote $\{v_{n_l}\}$, satisfying
   \begin{equation*}
      v_{n_l} \to v\quad \mathrm{strongly\ in}\ L^q(\Omega)\ \mathrm{as}\ l \to \infty\ \mathrm{for\ all}\ q \in [2,2^*).
   \end{equation*}
   Since $\{u_{n_l}\} \subset \mathcal{P}(\Omega)$,
   $\|u_{n_l}\|_{L^1(\Omega)}$ is bounded.
   For $c>0$,\ we see
   $$ \int_{\{u_{n_l}\geq c\}} u_{n_l}\, dx 
   \leq \int_{\{u_{n_l}\geq c\}} \frac{u_{n_l}^{p+1-\alpha}}{c^{p-\alpha}}\, dx 
   \leq \frac{1}{c^{p-\alpha}}\int_{\Omega} u_{n_l}^{p+1-\alpha}\, dx.$$
   From \eqref{eq23}, we have
   $$\limsup_{c\to\infty}\sup_{l\in\mathbb{N}}\int_{\{u_{n_l}\geq c\}} u_{n_l}\, dx =0.$$
   Thus $\{u_{n_l}\}$ is equi-integrable.
   By the Dunford--Pettis theorem, 
   there exist a subsequence (not relabeled) 
   and a function $\tilde{u} \in L^1(\Omega)$ such that
   \begin{equation}\label{eq26}
      u_{n_l} \rightharpoonup \tilde{u}\quad \mathrm{weakly\ in}\ L^1(\Omega)\ \mathrm{as}\ l \to \infty.
   \end{equation}
   Here, for all $f \in C_c^{\infty}(\Omega)$, 
   from \eqref{eq24} and \eqref{eq26}, we have
   $$\int_{\Omega} u(x)f(x)\, dx  
   = \int_{\Omega} \tilde{u}(x)f(x)\, dx.$$
   Therefore we see $u = \tilde{u}$ a.e. in $\Omega$. 
   By \eqref{eq26}, we have 
   $$1 = \int_{\Omega} u_{n_l}(x)\, dx \to \int_{\Omega} u(x)\, dx\quad \mathrm{as}\ l\to\infty.$$
   Hence we obtain $u \in L^{p+1-\alpha}\cap\mathcal{P}(\Omega)$ then $(u,v) \in X$.
   The proof is completed.
\end{proof}

\subsection{Lower semicontinuity}

\begin{lem}\label{lem3.4}
   Let $\{(u_n,v_n)\} \subset X$ and $(u,v) \in X$ satisfying 
   $u_n \rightharpoonup u$ weakly in $L^1\cap L^{p+1-\alpha}(\Omega)$ and 
   $v_n \to v$ weakly in $H^1(\Omega)$ and strongly in $L^q(\Omega)$ as $n \to \infty$ 
   for all $q \in [2,2^*)$.
   Then 
   $$F_\tau(u,v) \leq \liminf_{n\to\infty}F_\tau(u_n,v_n).$$
\end{lem}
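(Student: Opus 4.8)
The plan is to decompose
\[
F_\tau(u,v) = \frac{1}{2\tau\chi}W_{m_\varepsilon}(u,u_\tau^{k-1})^2 + \frac{1}{2\tau}\|v-v_\tau^{k-1}\|_{L^2(\Omega)}^2 + E(u,v)
\]
and to treat each summand separately, distinguishing the pieces that are lower semicontinuous under the stated convergences from those that genuinely converge; at the end these are recombined via the elementary inequality $\liminf_n(a_n+b_n)\ge\liminf_n a_n+\lim_n b_n$, valid whenever $\lim_n b_n$ exists. For the first summand, note that weak convergence $u_n\rightharpoonup u$ in $L^1(\Omega)$ implies, by testing against the restrictions to $\Omega$ of functions in $C_c(\Rd)$, that $u_n\rightharpoonup u$ weakly* in $\mathcal{M}_{loc}^+(\Rd)$. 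Applying Lemma \ref{lem2.7} with the constant sequences $\Omega_n\equiv\Omega$ and $m_n\equiv m_\varepsilon$, and the constant sequence $u_\tau^{k-1}$ in the second argument, gives $W_{m_\varepsilon}(u,u_\tau^{k-1})\le\liminf_n W_{m_\varepsilon}(u_n,u_\tau^{k-1})$, and since both sides are nonnegative the same inequality holds for the squares. Thus the Wasserstein term is lower semicontinuous.

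Next I would handle the remaining quadratic pieces. Since $2\in[2,2^*)$, the hypothesis gives $v_n\to v$ strongly in $L^2(\Omega)$, whence $\|v_n-v_\tau^{k-1}\|_{L^2(\Omega)}^2\to\|v-v_\tau^{k-1}\|_{L^2(\Omega)}^2$, so the $L^2$ distance term converges; similarly $\tfrac12\int_\Omega(|\nabla v|^2+v^2)\,dx=\tfrac12\|v\|_{H^1(\Omega)}^2$ is lower semicontinuous under weak $H^1$ convergence. For the internal energy $\frac{p}{\chi(p-\alpha)(p+1-\alpha)}\int_\Omega u^{p+1-\alpha}\,dx$, the exponent satisfies $p+1-\alpha>2-2/d\ge1$, so $r\mapsto r^{p+1-\alpha}$ is convex and nonnegative; by the same convexity argument as in Lemma \ref{lem2.11} (equivalently, weak lower semicontinuity of the $L^{p+1-\alpha}$ norm), this term is weakly lower semicontinuous in $L^{p+1-\alpha}(\Omega)$.

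The delicate piece is the coupling $-\int_\Omega uv\,dx$, which is not lower semicontinuous, so I must show it converges. Write $r=p+1-\alpha$ and let $r'=(p+1-\alpha)/(p-\alpha)$ be its conjugate exponent. Setting $t=p-\alpha$, one has $r'=1+1/t$, and the standing assumption $p>1+\alpha-2/d$ gives $t>1-2/d$; hence for $d\ge3$, $r'<1+\tfrac{d}{d-2}=\tfrac{2(d-1)}{d-2}<\tfrac{2d}{d-2}=2^*$, while for $d=2$ one has $t>0$ so $r'<\infty=2^*$. In every parameter regime of Theorems \ref{thm1.1}, \ref{thm1.2} and \ref{thm1.4} we thus have $r'<2^*$, and since $\Omega$ is bounded the strong $L^q$ convergence of $\{v_n\}$ for all $q\in[2,2^*)$ yields $v_n\to v$ strongly in $L^{r'}(\Omega)$. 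Splitting
\[
\int_\Omega u_nv_n\,dx-\int_\Omega uv\,dx=\int_\Omega (u_n-u)v\,dx+\int_\Omega u_n(v_n-v)\,dx,
\]
the first integral tends to $0$ by weak $L^r$ convergence (as $v\in L^{r'}(\Omega)$), while the second is bounded by $\|u_n\|_{L^r(\Omega)}\|v_n-v\|_{L^{r'}(\Omega)}$, which tends to $0$ by the uniform bound of Lemma \ref{lem3.2} and the strong convergence; hence $\int_\Omega u_nv_n\,dx\to\int_\Omega uv\,dx$.

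Finally, collecting the estimates: the Wasserstein term, the internal-energy term and the $H^1$ term are lower semicontinuous, while the $L^2$ distance term and the coupling term converge, so adding them and applying $\liminf_n(a_n+b_n)\ge\liminf_n a_n+\lim_n b_n$ to the convergent pieces yields $F_\tau(u,v)\le\liminf_n F_\tau(u_n,v_n)$. The main obstacle is precisely the coupling term $-\int_\Omega uv\,dx$: being the only non-lsc summand, it must be shown to converge, and this forces the compact Sobolev embedding together with the exponent check $r'<2^*$ --- the step in which the subcritical hypothesis $p>1+\alpha-2/d$ is used.
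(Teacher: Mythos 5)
Your proposal is correct and follows essentially the same route as the paper's proof: lower semicontinuity of the weighted Wasserstein term via Lemma \ref{lem2.7} (after upgrading weak $L^1$ convergence to weak* convergence in $\mathcal{M}_{loc}^+(\Rd)$), weak lower semicontinuity of the norm terms, and convergence of the coupling term $\int_\Omega u_n v_n\,dx$ via the identical splitting and H\"older estimate with the exponent check $(p+1-\alpha)/(p-\alpha)<2^*$. The only (inessential) deviation is that you treat $\|v_n-v_\tau^{k-1}\|_{L^2(\Omega)}^2$ by strong $L^2$ convergence, whereas the paper invokes weak lower semicontinuity of the norm; both are valid under the stated hypotheses.
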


\begin{proof}
   Fix $(u_\tau^{k-1},v_\tau^{k-1}) \in X$.
   Let $\{(u_n,v_n)\} \subset X$ and $(u,v) \in X$ satisfying 
   $u_n \rightharpoonup u$ weakly in $L^1\cap L^{p+1-\alpha}(\Omega)$ and 
   $v_n \to v$ weakly in $H^1(\Omega)$ and strongly in $L^q(\Omega)$ for all $q\in[2,2^*)$ as $n \to \infty$.
   Then, for all $f \in C_c(\Rd)$, it follows
   \begin{align*}
      \int_{\Rd} f(x)u_n(x)\, dx 
      = \int_{\Omega} f(x)u_n(x)\, dx
      \to \int_{\Omega} f(x)u(x)\, dx
      =\int_{\Rd} f(x)u(x)\, dx
      \quad \mathrm{as}\ n\to\infty,
   \end{align*}
   thus $u_n \rightharpoonup u$ weakly* in $\mathcal{M}_{loc}^+(\Rd)$ as $n\to\infty$.
   Since $W_{m_{\varepsilon}}$ is lower semicontinuous 
   with respect to weak* convergence in $\mathcal{M}_{loc}^+(\Rd)$ (see Lemma \ref{lem2.7}),
   we have
   \begin{equation*}
      W_{m_{\varepsilon}}(u,u_\tau^{k-1})^2 
      \leq \liminf_{n\to\infty}W_{m_{\varepsilon}}(u_n,u_\tau^{k-1})^2.
   \end{equation*}
   In addition, since the norm is lower semicontinuous
   with respect to the weak topology, 
   we have
   \begin{align*}
      &\|v - v_\tau^{k-1}\|_{L^2(\Omega)}^2 
      \leq \liminf_{n\to\infty}\|v_n - v_\tau^{k-1}\|_{L^2(\Omega)}^2,\\
      &\|u\|_{L^{p+1-\alpha}(\Omega)}^{p+1-\alpha}
      \leq \liminf_{n\to\infty}\|u_n\|_{L^{p+1-\alpha}(\Omega)}^{p+1-\alpha},\\
      &\|v\|_{H^1(\Omega)}^2 
      \leq \liminf_{n\to\infty}\|v_n\|_{H^1(\Omega)}^2.
   \end{align*}
   We will show 
   \begin{equation}\label{eq29}
      \lim_{n\to\infty}\|u_nv_n\|_{L^1(\Omega)} = \|uv\|_{L^1(\Omega)}.
   \end{equation}
   By H\"older's inequality, it follows
   \begin{align*}
      \left|\int_{\Omega} (uv - u_nv_n) \, dx\right|
      &\leq \left|\int_{\Omega} (u - u_n)v\, dx\right| + \int_{\Omega} |u_n||v - v_n|\, dx\\
      &\leq \left|\int_{\Omega} (u - u_n)v\, dx\right| + \|u_n\|_{L^{p+1-\alpha}(\Omega)}\|v-v_n\|_{L^{\frac{p+1-\alpha}{p-\alpha}}(\Omega)}.
   \end{align*}
   Note that $2\leq (p+1-\alpha)/(p-\alpha) < 2^*$ and $v \in L^{\frac{p+1-\alpha}{p-\alpha}}(\Omega)$
   because of $p+1-\alpha>2/d, p\geq1$ and $0<\alpha<1$.
   Since $\|u_n\|_{L^{p+1-\alpha}(\Omega)}$ is bounded, 
   using asummptions, we obtain
   \begin{equation*}
      \lim_{n\to\infty}\left|\int_{\Omega} (uv - u_nv_n)\, dx\right| = 0,
   \end{equation*}
   then \eqref{eq29} holds.
   From these, we complete the proof.
\end{proof}

\subsection{Conclusion}

\begin{prop}\label{prop3.5}
   Let $p\geq1+\alpha-2/d$ and assume that $\chi>0$ is small enough if $p=1+\alpha-2/d$. 
   Let $(u_0,v_0) \in X$ be a pair of nonnegative functions.
   Then for each $k \in \mathbb{N}$,
   there is at least one minimizer $(u_\tau^k,v_\tau^k) \in X$ in \eqref{mms}
   and the following inequalities hold: 
   \begin{align}
      &\frac{1}{2\tau}\left(\frac{W_{m_\varepsilon}(u_\tau^k,u_\tau^{k-1})^2}{\chi} + \|v_\tau^k - v_\tau^{k-1}\|_{L^2(\Omega)}^2\right) + E(u_\tau^k,v_\tau^k)\label{eq37}\\
      &\hspace{0.5cm} \leq \frac{1}{2\tau}\left(\frac{W_{m_\varepsilon}(\tilde{u},u_\tau^{k-1})^2}{\chi} + \|\tilde{v} - v_\tau^{k-1}\|_{L^2(\Omega)}^2\right) + E(\tilde{u},\tilde{v})\quad \forall (\tilde{u},\tilde{v}) \in X,\ \forall k \in \mathbb{N},\notag\\
      &E(u_\tau^k,v_\tau^k) \leq E(u_\tau^{k-1},v_\tau^{k-1})\quad \forall k \in \mathbb{N}.\label{eq38}
   \end{align}
\end{prop}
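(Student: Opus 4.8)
The plan is to prove both assertions by induction on $k$ via the direct method of the calculus of variations, assembling the three structural lemmas just established. The base point $(u_\tau^0,v_\tau^0)=(u_0,v_0)$ lies in $X$ by hypothesis, and $E(u_0,v_0)<\infty$ since $u_0\in L^{p+1-\alpha}(\Omega)$ and $v_0\in H^1(\Omega)$. Assuming $(u_\tau^{k-1},v_\tau^{k-1})\in X$ has been constructed, I would first observe that the infimum defining $F_\tau$ is a finite real number: it is bounded below by Lemma \ref{lem3.1}, and bounded above because testing $F_\tau$ at $(u_\tau^{k-1},v_\tau^{k-1})$ returns the finite value $E(u_\tau^{k-1},v_\tau^{k-1})$, the metric terms vanishing since $W_{m_\varepsilon}$ is a pseudodistance (Proposition \ref{prop2.5}). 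Hence a minimizing sequence $\{(u_n,v_n)\}\subset X$ exists.

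Next I would extract a limit. Because $F_\tau(u_n,v_n)$ is bounded, Lemma \ref{lem3.2} furnishes a subsequence and a pair $(u,v)\in X$ with $u_{n_l}\rightharpoonup u$ weakly in $L^1\cap L^{p+1-\alpha}(\Omega)$, $v_{n_l}\rightharpoonup v$ weakly in $H^1(\Omega)$, and $v_{n_l}\to v$ strongly in $L^q(\Omega)$ for $q\in[2,2^*)$. These are precisely the hypotheses of Lemma \ref{lem3.4}, so
$$F_\tau(u,v)\le\liminf_{l\to\infty}F_\tau(u_{n_l},v_{n_l})=\inf_{(w,z)\in X}F_\tau(w,z).$$
Since $(u,v)\in X$, the reverse inequality is automatic, so $(u,v)$ attains the infimum; setting $(u_\tau^k,v_\tau^k):=(u,v)$ closes the induction step and produces the minimizer.

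Inequality \eqref{eq37} is then simply the minimality just established, written out with the explicit form \eqref{Ftau} of $F_\tau$. For the energy monotonicity \eqref{eq38} I would specialize \eqref{eq37} to the admissible competitor $(\tilde u,\tilde v)=(u_\tau^{k-1},v_\tau^{k-1})\in X$. On the right-hand side both $W_{m_\varepsilon}(u_\tau^{k-1},u_\tau^{k-1})$ and $\|v_\tau^{k-1}-v_\tau^{k-1}\|_{L^2(\Omega)}$ vanish, leaving exactly $E(u_\tau^{k-1},v_\tau^{k-1})$, while on the left the metric terms are nonnegative; thus
$$E(u_\tau^k,v_\tau^k)\le F_\tau(u_\tau^k,v_\tau^k)\le F_\tau(u_\tau^{k-1},v_\tau^{k-1})=E(u_\tau^{k-1},v_\tau^{k-1}),$$
which is \eqref{eq38}.

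The argument is a clean assembly rather than a hard computation, so there is no serious obstacle: all the genuine analytic work (coercivity, compactness via equi-integrability and Dunford--Pettis, and the delicate lower semicontinuity of $W_{m_\varepsilon}$ under weak* convergence) has already been discharged in Lemmas \ref{lem3.1}, \ref{lem3.2} and \ref{lem3.4}. The only points requiring care are bookkeeping: confirming that the infimum is finite so that a minimizing sequence is meaningful, and invoking the pseudodistance property $W_{m_\varepsilon}(\mu,\mu)=0$ (Proposition \ref{prop2.5}) to annihilate the metric term for the competitor choice yielding \eqref{eq38}.
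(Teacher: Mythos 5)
Your proposal is correct and follows essentially the same route as the paper: the direct method assembling Lemma \ref{lem3.1} (boundedness below), Lemma \ref{lem3.2} (compactness of minimizing sequences) and Lemma \ref{lem3.4} (lower semicontinuity), with \eqref{eq37} read off from minimality and \eqref{eq38} obtained by testing with $(\tilde u,\tilde v)=(u_\tau^{k-1},v_\tau^{k-1})$. The only additions — the explicit induction framing and the remark that the infimum is finite because the previous iterate is an admissible competitor with vanishing metric terms — are harmless bookkeeping that the paper leaves implicit.
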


\begin{proof}
   Let $(u_\tau^{k-1},v_\tau^{k-1}) \in X$ for $k \in \mathbb{N}$.
   By Lemma \ref{lem3.1}, 
   there exists a minimizing sequence $\{(u_n,v_n)\} \subset X$ 
   such that $F_\tau(u_n,v_n) \to \inf_{(u,v)\in X}F_\tau(u,v) > -\infty$ as $n\to\infty$.
   By Lemma \ref{lem3.2},
   there exist a subsequence $\{(u_{n_l},v_{n_l})\}_{l\in\mathbb{N}}$ and a pair of functions $(u_\tau^k,v_\tau^k) \in X$ such that
   \begin{align*}
      &u_{n_l} \rightharpoonup u_\tau^k\quad \mathrm{weakly\ in}\ L^1\cap L^{p+1-\alpha}(\Omega)\ \mathrm{as}\ l\to\infty,\\
      &v_{n_l} \rightharpoonup v_\tau^k\quad \mathrm{weakly\ in}\ H^1(\Omega)\ \mathrm{as}\ l \to \infty,\\
      &v_{n_l} \to v_\tau^k\quad \mathrm{strongly\ in}\ L^q(\Omega)\ \mathrm{as}\ l \to \infty\ \mathrm{for\ all}\ q \in [2,2^*).
   \end{align*}
   By Lemma \ref{lem3.4}, we have
   \begin{equation*}
      F_\tau(u_\tau^k,v_\tau^k) \leq \liminf_{l\to\infty} F_\tau(u_{n_l},v_{n_l}).
   \end{equation*}
   As a result, we see
   \begin{align*}
      \inf_{(u,v)\in X} F_{\tau}(u,v) 
      \leq F_\tau(u_\tau^k,v_\tau^k)
      \leq \liminf_{l\to\infty} F_\tau(u_{n_l},v_{n_l})
      = \inf_{(u,v)\in X} F_{\tau}(u,v).
   \end{align*}
   Thus $(u_{\tau}^k,v_\tau^k)$ is the minimizer of $F_{\tau}$ in $X$
   and \eqref{eq37} holds obviously.
   In particular, choosing $(\tilde{u},\tilde{v}) = (u_\tau^{k-1},v_\tau^{k-1})$ in \eqref{eq37}, 
   we obtain \eqref{eq38} and the proof is completed.
\end{proof}

\begin{rem}\label{rem3.5}
   In Proposition \ref{prop3.5}, 
   we may take $v_\tau^k \in H^1(\Omega)$ which is not nonnegative,
   however, we can choose a pair of nonnegative functions as a minimizer of $F_\tau$ in $X$.
   Indeed, let $(u_\tau^k,v_\tau^k)$ be the minimizer in Proposition \ref{prop3.5} 
   and assume that $v_\tau^{k-1}$ is nonnegative.
   Then since $u_\tau^k$ is nonnegative, $|v_\tau^k| \in H^1(\Omega)$ and 
   $||v_\tau^k| - v_\tau^{k-1}| \leq |v_\tau^k - v_\tau^{k-1}|$, 
   we have
   \begin{align*}
      F_\tau(u_\tau^k,v_\tau^k)
      &\leq F_\tau(u_\tau^k,|v_\tau^k|)\\
      &= \frac{1}{2\tau}\left[\frac{W_{m_\varepsilon}(u_\tau^k,u_\tau^{k-1})^2}{\chi} + \||v_\tau^k| - v_\tau^{k-1}\|_{L^2(\Omega)}^2\right]
       + E(u_\tau^k,|v_\tau^k|)\\
      &\leq \frac{1}{2\tau}\left[\frac{W_{m_\varepsilon}(u_\tau^k,u_\tau^{k-1})^2}{\chi} + \|v_\tau^k - v_\tau^{k-1}\|_{L^2(\Omega)}^2\right]
       + E(u_\tau^k,v_\tau^k)
       = F_\tau(u_\tau^k,v_\tau^k). 
   \end{align*}
   Thus we can choose $(u_\tau^k,|v_\tau^k|) \in X$ as a minimizer of $F_\tau$ in $X$ instead of $(u_\tau^k,v_\tau^k)$.
   In the rest of the paper, 
   we call $(u_\tau^k,|v_\tau^k|)$ a minimizer of $F_\tau$ in $X$ for $k\in\mathbb{N}$ 
   and denote by $(u_\tau^k,v_\tau^k)$.
\end{rem}


\section{Euler--Lagrange equations}

\subsection{Flow interchange lemma}

\quad First, 
we show the existence of a solution to the other equation which is used later.
The important properties of this solution 
are the mass conservation law and nonnegativity.

\begin{prop}\label{prop4.1}
   Let $1+\alpha-2/d \leq p \leq 1+\alpha$.
   Let $w_0 \in L^1\cap L^2\cap W^{1,p+1-\alpha}(\Omega)$ be a nonnegative function.
   Then, for $\delta>0$ and $\varphi \in C^\infty(\bar{\Omega})$ with $\nabla\varphi\cdot\boldsymbol{n} = 0$ on $\partial\Omega$, 
   there exist $T_0 = T_0(\alpha,\varepsilon,\varphi,\Omega,w_0)>0$ and a unique local solution $w$ satisfying
   \begin{align}\label{eq72}
      &\bullet w \in C([0,T_0];L^1\cap L^2\cap W^{1,p+1-\alpha}(\Omega))\cap C((0,T_0];W^{2,p+1-\alpha}(\Omega))\cap C^1((0,T_0];L^{p+1-\alpha}(\Omega)),\notag\\
      &\bullet 
      \begin{cases}
         \dt w = \delta\Delta w + \nabla\cdot(m_\varepsilon(w)\nabla\varphi)\quad &\mathrm{a.e.\ in}\ \Omega\times(0,T_0],\\
         \nabla w \cdot \boldsymbol{n} = 0\quad &\mathrm{on}\ \partial\Omega\times(0,T_0],\\
         w(0) = w_0\quad &\mathrm{in}\ L^1\cap L^2\cap W^{1,p+1-\alpha}(\Omega),
      \end{cases}\\
      &\bullet w(t) \geq 0,\ \|w(t)\|_{L^1(\Omega)} = \|w_0\|_{L^1(\Omega)}\quad t \in [0,T_0].\notag
   \end{align}
\end{prop}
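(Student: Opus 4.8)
The plan is to read \eqref{eq72} as a semilinear parabolic equation and solve it by analytic-semigroup/fixed-point theory, then extract nonnegativity and mass conservation from the structure. Write $q:=p+1-\alpha$ and let $A:=-\delta\Delta$ with homogeneous Neumann boundary conditions, regarded as a sectorial operator on $L^q(\Omega)$ generating an analytic semigroup $\{e^{-tA}\}_{t\ge0}$, with $D(A)=\{w\in W^{2,q}(\Omega):\nabla w\cdot\boldsymbol{n}=0\ \text{on}\ \partial\Omega\}$. First I would extend $m_\varepsilon$ to a $C^\infty$ function on all of $\mathbb{R}$ whose values and first two derivatives are globally bounded; this is possible precisely because $m_\varepsilon(r)=(r+\varepsilon)^\alpha$ already has bounded $m_\varepsilon',m_\varepsilon''$ on $[0,\infty)$, and it is what makes the nonlinearity $f(w):=\nabla\cdot(m_\varepsilon(w)\nabla\varphi)=m_\varepsilon'(w)\nabla\varphi\cdot\nabla w+m_\varepsilon(w)\Delta\varphi$ well defined for sign-changing $w$ and locally Lipschitz.

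Next I would solve the mild formulation $w(t)=e^{-tA}w_0+\int_0^t e^{-(t-s)A}f(w(s))\,ds$ by the Banach fixed-point theorem in a closed ball of $C([0,T_0];W^{1,q}(\Omega))$. The key ingredient is the smoothing bound $\|e^{-\sigma A}g\|_{W^{1,q}}\le C\sigma^{-1/2}\|g\|_{L^q}$, whose singularity $\int_0^t(t-s)^{-1/2}\,ds=2\sqrt{t}$ is integrable, together with the local Lipschitz continuity of $f$ from $W^{1,q}$ into $L^q$. A technical point here is the difference term $[m_\varepsilon'(w_1)-m_\varepsilon'(w_2)]\nabla w_2$: controlling its $L^q$ norm needs extra integrability of $\nabla w$ beyond $L^q$, which is gained from the semigroup's regularization (equivalently, working in a fractional power space $D(A^\beta)$, $\beta>\tfrac12$, or a weighted-in-time $W^{1,q}$ norm), and this is exactly where the hypotheses on $p,\alpha,d$ enter. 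For $T_0$ small the map is a contraction, yielding a unique local solution; running the same estimate in $L^1$ and $L^2$ (using $\|m_\varepsilon(w)\|_{L^r}\lesssim 1+\|w\|_{L^r}$ on the bounded domain since $0<\alpha<1$) upgrades it to $w\in C([0,T_0];L^1\cap L^2\cap W^{1,q})$. Finally, analyticity of $\{e^{-tA}\}$ and Hölder continuity in $t$ of $s\mapsto f(w(s))$ promote the mild solution to a strong one, giving $w\in C((0,T_0];W^{2,q})\cap C^1((0,T_0];L^q)$, the equation a.e., and the Neumann condition.

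Mass conservation is immediate: integrating the equation over $\Omega$ and using $\nabla w\cdot\boldsymbol{n}=0$ and $\nabla\varphi\cdot\boldsymbol{n}=0$ kills both boundary fluxes, so $\frac{d}{dt}\int_\Omega w\,dx=0$ and hence $\int_\Omega w(t)\,dx=\int_\Omega w_0\,dx$. I expect nonnegativity to be the main obstacle. Unlike the mobility $(u+\varepsilon)^\alpha-\varepsilon^\alpha$ used in \cite{LMS}, here $m_\varepsilon(0)=\varepsilon^\alpha>0$, so the flux does not vanish where the density vanishes and a naive maximum principle does not apply. My plan is the Stampacchia/energy method: test the equation with the negative part $w^-:=\max(-w,0)$ and integrate by parts, using the Neumann conditions on both $w$ and $\varphi$, to reach
\[
   \tfrac12\tfrac{d}{dt}\|w^-\|_{L^2(\Omega)}^2=-\delta\|\nabla w^-\|_{L^2(\Omega)}^2+\int_\Omega m_\varepsilon(w)\nabla\varphi\cdot\nabla w^-\,dx .
\]
The delicate step is to absorb the last term, exploiting $m_\varepsilon\ge0$, the primitive structure $m_\varepsilon(w)\nabla w=\nabla G_\varepsilon(w)$ (with $G_\varepsilon'=m_\varepsilon$), and the dissipation $-\delta\|\nabla w^-\|^2$, so that a Gronwall argument with $w^-(0)=0$ forces $w^-\equiv0$. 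The conceptual reason this should succeed is that \eqref{eq72} is exactly the $W_{m_\varepsilon}$-gradient flow of $\delta\boldsymbol{U}_\varepsilon(w)+\int_\Omega w\varphi\,dx$ — indeed $m_\varepsilon U_\varepsilon''=1$ gives $\nabla\cdot(m_\varepsilon(w)\nabla U_\varepsilon'(w))=\Delta w$ — so the flow stays in $\mathcal{M}^+(\Omega)$; turning this into a closed energy estimate for the non-vanishing mobility is where I expect the real work to lie. Combining $w(t)\ge0$ with the conserved mass then yields $\|w(t)\|_{L^1(\Omega)}=\|w_0\|_{L^1(\Omega)}$.
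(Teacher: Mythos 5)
Your plan for existence, uniqueness and parabolic regularity is sound in outline, but it is a genuinely different construction from the paper's: you extend $m_\varepsilon$ smoothly to all of $\mathbb{R}$ and run a single nonlinear fixed point, whereas the paper never evaluates $m_\varepsilon$ off $[0,\infty)$ --- it solves a sequence of \emph{linear} problems in which the coefficients $\alpha(w_{k-1}+\varepsilon)^{\alpha-1}$ and $(w_{k-1}+\varepsilon)^{\alpha}$ are frozen at the previous (already nonnegative) iterate, proves nonnegativity of each iterate, and passes to the limit. Freezing makes each fixed-point map affine in $w$, so the bilinear term $[m_\varepsilon'(w_1)-m_\varepsilon'(w_2)]\nabla w_2$ that you correctly flag never appears inside a single contraction step; it resurfaces only in the paper's Cauchy estimate for the sequence $\{w_k\}$, where the paper tacitly needs $\|\nabla w_k\|_{L^\infty}$ (a factor $|\nabla w_k|$ is dropped), so your concern is real and your proposed remedies (fractional power spaces, weighted-in-time norms) are the right kind of fix for that part.

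The genuine gap is the step you explicitly defer: nonnegativity, and it cannot be closed along the lines you sketch. In your energy identity split $m_\varepsilon(w)=\varepsilon^\alpha+\left[m_\varepsilon(w)-\varepsilon^\alpha\right]$. The difference piece is harmless: on $\{w<0\}$ it is bounded in absolute value by $Cw^-$, so it contributes at most $\tfrac{\delta}{2}\|\nabla w^-\|_{L^2}^2+C\|w^-\|_{L^2}^2$. But the constant piece contributes, after integration by parts using $\nabla\varphi\cdot\boldsymbol{n}=0$,
\begin{align*}
\varepsilon^\alpha\int_\Omega \nabla\varphi\cdot\nabla w^-\,dx
= -\varepsilon^\alpha\int_\Omega \Delta\varphi\, w^-\,dx
\le \varepsilon^\alpha\|\Delta\varphi\|_{L^\infty(\Omega)}\|w^-\|_{L^1(\Omega)},
\end{align*}
which is linear, not quadratic, in $w^-$. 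The resulting inequality for $y=\|w^-\|_{L^2}^2$ is $y'\le Cy+C\sqrt{y}$ with $y(0)=0$, which admits nonzero solutions, so no Gronwall argument can conclude $w^-\equiv0$. Moreover this is not a repairable technicality: take $w_0\equiv 0$ (admissible) and any $\varphi$ with $\Delta\varphi\not\equiv0$. Mass conservation is forced by the equation and the two Neumann conditions, so $\int_\Omega w(t)\,dx=0$ for all $t$; if in addition $w(t)\ge0$, then $w\equiv0$, which does not solve the equation because $\nabla\cdot(m_\varepsilon(0)\nabla\varphi)=\varepsilon^\alpha\Delta\varphi\not\equiv0$. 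Hence no function can satisfy all the bullets of \eqref{eq72} for such data: nonnegativity genuinely fails when $w_0$ vanishes on an open set where $\Delta\varphi<0$, precisely because $m_\varepsilon(0)=\varepsilon^\alpha>0$, as you yourself observed. Your gradient-flow heuristic does not rescue this, since the metric gradient flow in $(\mathcal{M}^+(\Omega),W_{m_\varepsilon})$ enforces the constraint $w\ge0$ in obstacle-problem fashion and therefore differs from the unconstrained PDE exactly in this situation. Be aware, finally, that the paper's own argument for this step (Step 2 of the Appendix) meets the same linear term $\int_\Omega(w_1+\varepsilon)^\alpha\Delta\varphi\,w_2^-\,dx$ and tries to absorb it with a parameter $\theta$ required to satisfy $\theta<\min_{t}\,\delta\|\nabla w_2^-(t)\|_{L^2}^2/\bigl(\|\nabla w_2^-(t)\|_{L^2}^2+1\bigr)$, a condition that degenerates exactly when $\|\nabla w_2^-(t)\|_{L^2}$ is small, i.e. in the regime one must control to conclude $w_2^-\equiv0$. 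So there is no trick in the paper that you are missing; closing this step would require additional hypotheses (for instance a mobility vanishing at $0$, as in \cite{LMS}, or strict positivity of $w_0$).
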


This proposition is proved by the contraction mapping theorem.
However, we need to take care with nonnegativity of functions 
because $m_\varepsilon(r) = (r+\varepsilon)^\alpha$ can not be definded for $r<-\varepsilon$.
To overcome this, 
we inductively define a special contraction map 
depending a nonnegative function of a previous step (see Appendix).
This idea is inspired by \cite{KNST}.

\begin{prop}\label{prop4.2}
   Let $w$ be a local solution in Proposition \ref{prop4.1}.
   Then $w$ can be extended globally in time.
\end{prop}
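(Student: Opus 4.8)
The plan is to run a continuation argument: a finite maximal existence time would force the solution norm to blow up, so it suffices to rule this out by a priori estimates. Let $[0,T_{\max})$ be the maximal interval on which the local solution of Proposition \ref{prop4.1} exists, and suppose for contradiction that $T_{\max}<\infty$. Since the existence time $T_0$ in Proposition \ref{prop4.1} depends on $w_0$ only through the norm $\|w_0\|_{L^1\cap L^2\cap W^{1,p+1-\alpha}(\Omega)}$, it is enough to show that
\begin{equation*}
   \sup_{t\in[0,T_{\max})}\|w(t)\|_{L^1\cap L^2\cap W^{1,p+1-\alpha}(\Omega)}<\infty;
\end{equation*}
then restarting the equation from a time close to $T_{\max}$ produces an extension beyond $T_{\max}$, a contradiction. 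Because $p+1-\alpha\leq2$ and $\Omega$ is bounded, we have the embedding $H^1(\Omega)\hookrightarrow L^2\cap W^{1,p+1-\alpha}(\Omega)$, so it suffices to bound $\|w(t)\|_{H^1(\Omega)}$ on every interval $[0,T]$ with $T<\infty$.

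First I would record the $L^1$ and $L^2$ bounds. The $L^1$ bound is exactly the mass conservation $\|w(t)\|_{L^1(\Omega)}=\|w_0\|_{L^1(\Omega)}$ already provided by Proposition \ref{prop4.1}. For the $L^2$ bound, test the equation against $w$ and integrate by parts using the Neumann conditions to obtain
\begin{equation*}
   \frac{1}{2}\frac{d}{dt}\|w\|_{L^2(\Omega)}^2 + \delta\|\nabla w\|_{L^2(\Omega)}^2
   = -\int_{\Omega} m_\varepsilon(w)\nabla\varphi\cdot\nabla w\, dx.
\end{equation*}
Since $m_\varepsilon(w)^2=(w+\varepsilon)^{2\alpha}\leq C(1+w^2)$ because $0<2\alpha<2$ and $w\geq0$, Young's inequality absorbs part of $\delta\|\nabla w\|_{L^2(\Omega)}^2$ and leaves $\frac{d}{dt}\|w\|_{L^2(\Omega)}^2\leq C(1+\|w\|_{L^2(\Omega)}^2)$, so Gr\"onwall's inequality bounds $\|w(t)\|_{L^2(\Omega)}$ on $[0,T]$.

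The gradient bound is the crucial step. I would test the equation against $-\Delta w$, giving
\begin{equation*}
   \frac{1}{2}\frac{d}{dt}\|\nabla w\|_{L^2(\Omega)}^2 + \delta\|\Delta w\|_{L^2(\Omega)}^2
   = -\int_{\Omega}\Delta w\,\nabla\cdot(m_\varepsilon(w)\nabla\varphi)\, dx.
\end{equation*}
Here the decisive gain from the approximation is the uniform Lipschitz bound $m_\varepsilon'(r)\leq\alpha\varepsilon^{\alpha-1}$, which yields
\begin{equation*}
   \|\nabla\cdot(m_\varepsilon(w)\nabla\varphi)\|_{L^2(\Omega)}
   \leq \alpha\varepsilon^{\alpha-1}\|\nabla\varphi\|_{L^\infty(\Omega)}\|\nabla w\|_{L^2(\Omega)} + \|\Delta\varphi\|_{L^\infty(\Omega)}\|m_\varepsilon(w)\|_{L^2(\Omega)},
\end{equation*}
and the last factor is already controlled by the $L^2$ bound. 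Young's inequality then absorbs $\delta\|\Delta w\|_{L^2(\Omega)}^2$ and produces $\frac{d}{dt}\|\nabla w\|_{L^2(\Omega)}^2\leq C(1+\|\nabla w\|_{L^2(\Omega)}^2+\|w\|_{L^2(\Omega)}^2)$, whence Gr\"onwall bounds $\|\nabla w(t)\|_{L^2(\Omega)}$ on $[0,T]$. Combined with the previous bounds this controls $\|w(t)\|_{H^1(\Omega)}$, hence $\|w(t)\|_{L^1\cap L^2\cap W^{1,p+1-\alpha}(\Omega)}$, on $[0,T_{\max})$, completing the continuation.

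The main obstacle I anticipate is justifying the two differential identities with the regularity actually available: when $p<1+\alpha$ we only have $w\in C((0,T_0];W^{2,p+1-\alpha}(\Omega))$ with $p+1-\alpha<2$, so $\Delta w$ need not lie in $L^2(\Omega)$ a priori and the test against $-\Delta w$ is only formal. I would handle this by using parabolic smoothing (the nonlinearity $m_\varepsilon$ being smooth and Lipschitz) to upgrade $w$ to higher, indeed $H^2$, regularity for $t>0$, so that the estimates are rigorous on $(0,T_{\max})$, and by a density/approximation argument to pass from smooth approximations to $w$, together with a careful statement of the continuation principle confirming that the local time in Proposition \ref{prop4.1} is controlled solely by the $L^1\cap L^2\cap W^{1,p+1-\alpha}(\Omega)$-norm of the datum.
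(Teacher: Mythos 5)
Your argument is essentially correct, but it takes a genuinely different route from the paper at the decisive step. The shared part: the paper proves exactly your $L^2$ bound (testing with $w$, Neumann conditions, H\"older/Young, Gr\"onwall), and then notes that $\|w(t)\|_{L^{p+1-\alpha}(\Omega)}$ is controlled since $p+1-\alpha\leq2$. For the gradient, however, the paper does \emph{not} test with $-\Delta w$. It regards the problem as the semilinear equation $\partial_t w=\delta\Delta w+f(w)$ with $f(w)=\nabla\cdot(m_\varepsilon(w)\nabla\varphi)$, proves the linear-growth estimate
\begin{equation*}
   \|f(w)\|_{L^{p+1-\alpha}(\Omega)}
   \leq C(\alpha,\varepsilon,\varphi)\bigl(1+\|w\|_{W^{1,p+1-\alpha}(\Omega)}\bigr)
\end{equation*}
(possible only because of the uniform Lipschitz bound $m_\varepsilon'\leq\alpha\varepsilon^{\alpha-1}$), and then invokes the abstract semigroup result \cite[Proposition 7.2.2]{Lun}: linear growth of the nonlinearity from $W^{1,p+1-\alpha}(\Omega)$ into $L^{p+1-\alpha}(\Omega)$ forbids finite-time blow-up of the $W^{1,p+1-\alpha}$-norm, which together with Proposition \ref{prop4.1} yields the global extension. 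The payoff of that route is precisely the obstacle you single out: it operates at the native regularity $C((0,T_0];W^{2,p+1-\alpha}(\Omega))$ of the solution, so $H^2$-regularity is never needed. Your route is more self-contained (pure energy estimates, no abstract machinery), but to make the $-\Delta w$ test rigorous when $p<1+\alpha$ you must re-run the Appendix's fixed-point and regularity theory at a higher integrability exponent (using $W^{2,p+1-\alpha}(\Omega)\hookrightarrow W^{1,q_1}(\Omega)$ with $q_1\geq2$, which holds precisely because $p\geq1+\alpha-2/d$), plus a uniqueness-and-covering argument to propagate $\Delta w(t)\in L^2(\Omega)$ over all of $(0,T_{\max})$; this is standard but it is exactly the work that the paper's single citation replaces, so it should not be treated as free. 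Two smaller points: since $w_0$ lies only in $W^{1,p+1-\alpha}(\Omega)$, $\|\nabla w(0)\|_{L^2(\Omega)}$ may be infinite, so your gradient Gr\"onwall argument must be anchored at some $t_0>0$ (where $w(t_0)\in W^{2,p+1-\alpha}(\Omega)\hookrightarrow H^1(\Omega)$), with $[0,t_0]$ handled by continuity into $W^{1,p+1-\alpha}(\Omega)$; and your premise that $T_0$ in Proposition \ref{prop4.1} depends on $w_0$ only through its norm is confirmed by the paper's contraction argument ($T_0$ is determined by $M_0=\|w_0\|_{W^{1,p+1-\alpha}(\Omega)}$), so the continuation principle underlying both proofs is sound.
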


\begin{proof}
   Let $w \in C([0,T_0];L^2\cap W^{1,p+1-\alpha}(\Omega))\cap C((0,T_0];W^{2,p+1-\alpha}(\Omega))\cap C^1((0,T_0];L^{p+1-\alpha}(\Omega))$ 
   be a nonnegative local solution to \eqref{eq72}.
   Then multiplying the first equation of \eqref{eq72} by $w(t)$ for $t \in (0,T_0]$ 
   and integrating in $\Omega$, we have 
   \begin{align*}
      \frac{d}{dt}\int_{\Omega} |w(t)|^2\, dx
      = \delta \int_{\Omega} (\Delta w(t))w(t)\, dx + \int_{\Omega} \nabla\cdot((w(t)+\varepsilon)^\alpha\nabla\varphi)w(t)\, dx.
   \end{align*}
   Note that thanks to $1+\alpha-2/d\leq p\leq 1+\alpha$ and the Sobolev embedding theorem,
   we have 
   $w(t) \in W^{2,p+1-\alpha}(\Omega) \hookrightarrow H^1(\Omega) \hookrightarrow L^{\frac{p+1-\alpha}{p-\alpha}}(\Omega)$,
   thus the right hand side is well-defined.
   Since $\nabla w\cdot\boldsymbol{n} = 0$ and $\nabla\varphi\cdot\boldsymbol{n} = 0$ on $\partial\Omega\times(0,T_0]$,
   we infer from integration by parts, H\"older's inequality and Young's inequality that
   \begin{align*}
   \frac{d}{dt}\|w(t)\|_{L^2(\Omega)}^2 
   &= -\delta\|\nabla w(t)\|_{L^2(\Omega)}^2 - \int_{\Omega} (w(t)+\varepsilon)^\alpha\nabla\varphi\cdot\nabla w(t)\, dx\\
   &\leq -\delta\|\nabla w(t)\|_{L^2(\Omega)}^2 + \|(w(t)+\varepsilon)^\alpha\nabla\varphi\|_{L^2(\Omega)}\|\nabla w(t)\|_{L^2(\Omega)}\\
   &\leq -\delta\|\nabla w(t)\|_{L^2(\Omega)}^2 + \delta\|\nabla w(t)\|_{L^2(\Omega)}^2 + \frac{1}{4\delta}\|(w(t)^\alpha+\varepsilon^\alpha)\nabla\varphi\|_{L^2(\Omega)}^2\\
   &\leq \frac{1}{2\delta}\left(\|w(t)\|_{L^2(\Omega)}^{2\alpha}\|\nabla\varphi\|_{L^{\frac{2}{1-\alpha}}(\Omega)}^2 + \varepsilon^{2\alpha}\|\nabla\varphi\|_{L^2(\Omega)}^2\right)\\
   &\leq \|w(t)\|_{L^2(\Omega)}^2 + C(\delta,\alpha,\varepsilon,\varphi),
   \end{align*}
   where $C(\delta,\alpha,\varepsilon,\varphi)$ is a constant. 
   Using Gronwall's lemma, we obtain
   \begin{align*}
      \|w(t)\|_{L^2(\Omega)}^2 \leq 
      \left[\|w(0)\|_{L^2(\Omega)}^2 + C(\delta,\alpha,\varepsilon,\varphi)\right]e^{T_0}\quad \mathrm{for}\ t \in [0,T_0].
   \end{align*}
   Since $p+1-\alpha \leq 2$, it also follows
   \begin{align*}
      \|w(t)\|_{L^{p+1-\alpha}(\Omega)}^{p+1-\alpha} 
      \leq |\Omega|^{\frac{1+\alpha-p}{2}}\left(\left[\|w(0)\|_{L^2(\Omega)}^2 + C(\delta,\alpha,\varepsilon,\varphi)\right]e^{T_0}\right)^{\frac{p+1-\alpha}{2}}
      \quad \mathrm{for}\ t \in [0,T_0].
   \end{align*}
   Next, setting $f(w) \coloneq \nabla\cdot((w+\varepsilon)^\alpha\nabla\varphi)$, 
   we have
   \begin{align*}
      &\|f(w)\|_{L^{p+1-\alpha}(\Omega)}\\
      &= \left\|\frac{\alpha\nabla w\cdot\nabla\varphi}{(w+\varepsilon)^{1-\alpha}} + (w+\varepsilon)^\alpha\Delta\varphi\right\|_{L^{p+1-\alpha}(\Omega)}\\
      &\leq \frac{\alpha\|\nabla\varphi\|_{L^\infty(\Omega)}}{\varepsilon^{1-\alpha}}\|\nabla w\|_{L^{p+1-\alpha}(\Omega)} + \|w\|_{L^{p+1-\alpha}(\Omega)}^\alpha\|\Delta\varphi\|_{L^{\frac{p+1-\alpha}{1-\alpha}}(\Omega)} + \varepsilon^\alpha\|\Delta\varphi\|_{L^{p+1-\alpha}(\Omega)}\\
      &\leq C(\alpha,\varepsilon,\varphi)(1+\|w\|_{W^{1,p+1-\alpha}(\Omega)}).
   \end{align*}
   By \cite[Proposition 7.2.2]{Lun}, it follows that
   $\|w(t)\|_{W^{1,p+1-\alpha}(\Omega)}$ is also bounded in $[0,T_0]$.
   Hence, combining this boundedness and Proposition \ref{prop4.1}, we can extend $w$ globally in time.
\end{proof}

\quad In order to consider the Euler--Lagrange equation for $u_\tau^k$, 
we need to use the flow interchange lemma (\cite{LMS}).
In the following, we prepare some lemmas to adapt it in our case.

\quad Fix $\varphi \in C^\infty(\overline{\Omega})$ with $\nabla\varphi\cdot\boldsymbol{n} = 0$ on $\partial\Omega$ and $\delta>0$.
Without loss of the generality, we assume $0\in \Omega$.
Let $\{a_n\}_n$ be a monotonically decreasing sequence converging to $1$ as $n\to\infty$
and set $\Omega_n \coloneq \{a_nx:x\in \Omega\}$.
By Lemma \ref{lem2.10} with $\mu_0=u_{\tau}^{k-1}$ and $\mu_1=u_{\tau}^k$, 
there exist a vanishing sequence $\{b_n\}_n$ such that $\Omega_{[b_n]} \subset \Omega_n$ and
a nonnegative function $\tilde{\rho}_n \in C^\infty(\overline{\Omega}_n\times[0,1])$ such that
$\|\tilde{\rho}_n(t)\|_{L^1(\Omega_n)} = 1$ for $n\in\mathbb{N}$ and $t \in [0,1]$
and a function $\tilde{\phi}_n \in C^\infty(\overline{\Omega}_n\times[0,1])$.
Notice that they satisfy
\begin{align*}
   W_{m_\varepsilon}(u_\tau^k,u_\tau^{k-1})^2 = 
   \lim_{n\to\infty}\int_0^1\int_{\Omega_n} m_\varepsilon(\tilde{\rho_n})|\nabla \tilde{\phi}_n|^2\, dx\, dt.
\end{align*}
Then we define $\rho_n : \overline{\Omega}\times[0,1] \to [0,\infty)$ by
$\rho_n(x,t) \coloneq \tilde{\rho}_n(a_nx,t)$ for $(x,t)\in \overline{\Omega}\times[0,1]$
and similarly, $\phi_n:\overline{\Omega}\times[0,1] \to \mathbb{R}$ by
$\phi_n(x,t) \coloneq \tilde{\phi}_n(a_nx,t)$ for $(x,t) \in \overline{\Omega}\times[0,1]$.
Note that $\rho_n,\phi_n \in C^\infty(\overline{\Omega}\times[0,1])$, 
$\|\rho_n(t)\|_{L^1(\Omega)} = a_n^{-d}$ for $n\in \mathbb{N}$ and $t\in[0,1]$ 
and they satisfy 
\begin{align}\label{eq82}
   W_{m_\varepsilon}(u_\tau^k,u_\tau^{k-1})^2 = 
   \lim_{n\to\infty}a_n^d\int_0^1\int_{\Omega} m_\varepsilon(\rho_n)|\nabla \phi_n|^2\, dx\, dt.
\end{align}
Moreover, since $\tilde{\rho}_n(i) \to u_\tau^{k-1+i}$ in $L^1(\Rd)$ for $i=0,1$ 
and $a_n \to 1$ as $n\to\infty$,
we have $\rho_n(i) \to u_\tau^{k-1+i}$ in $L^1(\Omega)$ as $n\to\infty$ for $i=0,1$.

Fix $n \in \mathbb{N}$ and $t \in [0,1]$.
Adapting Proposition \ref{prop4.1} and Proposition \ref{prop4.2} with $w_0 = \rho_n(t)$, 
we have a solution $S_z\rho_n(t)$ satisfying
\begin{align}
   &\bullet S_z\rho_n(t) \in C^\infty(\overline{\Omega}\times[0,\infty)),\notag\\
   &\bullet \partial_z (S_z\rho_n(t)) 
    = \delta\Delta (S_z\rho_n(t)) + \nabla\cdot(m_\varepsilon(S_z\rho_n(t))\nabla\varphi)
    \quad \mathrm{in}\ \Omega\times[0,\infty),\label{eq46}\\
   &\bullet \nabla S_z\rho_n(t)\cdot\boldsymbol{n} = 0\quad \mathrm{on}\ \partial\Omega\times(0,\infty),\notag\\
   &\bullet S_z\rho_n(t) \geq 0,\ \|S_z\rho_n(t)\|_{L^1(\Omega)} = \|\rho_n(t)\|_{L^1(\Omega)} = a_n^{-d}
    \quad \mathrm{for}\ z \in [0,\infty).\notag
\end{align}

\begin{rem}\label{rem4.4}
   Define $\rho_n^h(t) \coloneq S_{ht}\rho_n(t)$ for $h\in (0,1)$.
   Due to the smoothness of $S_z\rho_n(t)$ and $\rho_n(t)$, 
   $\rho_n^h(t)$ is $t$-differentiable in $(0,1)$.
   Since for each $n\in \mathbb{N}$, $\|\rho_n^h(t)\|_{L^1(\Omega)} = a_n^{-d}$ for all $t\in[0,1]$ and $h>0$,
   we have
   \begin{align*}
      \int_{\Omega} \dt\rho_n^h(t)\, dx = 0.
   \end{align*}
   Hence for fixed $n\in\mathbb{N}$ and $t\in[0,1]$, 
   as in the proof of Lemma \ref{lem2.10}, 
   we can find a unique solution $\phi_n^h(t) \in H^1(\Omega)$ satisfying
   \begin{equation}\label{ee2}
      \dt \rho_n^h(t) 
      = -\nabla\cdot(m_{\varepsilon}(\rho_n^h(t))\nabla\phi_n^h(t))\quad \mathrm{in}\ \Omega,\quad
      \nabla\phi_n^h(t)\cdot\boldsymbol{n} = 0\quad \mathrm{on}\ \partial\Omega.
   \end{equation}
   and the elliptic regularity theorem yields $\phi_n^h \in C^\infty(\overline{\Omega}\times[0,1])$.
   In addition, due to the smoothness of $\rho_n^h(t) = S_{ht}\rho_n(t)$ with respect to $h$,
   $\phi_n^h$ is $h$-differentiable.

   Then we define
   \begin{equation*}
      \boldsymbol{A}_n^h(t) 
      \coloneq \int_{\Omega} m_{\varepsilon}(\rho_n^h(t))|\nabla\phi_n^h(t)|^2\, dx,
   \end{equation*}
   and 
   $\boldsymbol{V}_\delta : L^{p+1-\alpha}\cap \mathcal{P}(\Omega) \to (-\infty, \infty)$ by
   \begin{align*}
      \boldsymbol{V}_\delta(u) 
      \coloneq \int_{\Omega} u\varphi\, dx + \delta \boldsymbol{U}_\varepsilon(u),
   \end{align*}
   where $\boldsymbol{U}_\varepsilon$ is defined in Lemma \ref{lem2.11}.
\end{rem}

\quad Next, we establish the Gronwall type inequality, and
it is obtained by the same argument for \cite[Lemma 4.2, Proposition 4.6]{LMS}.
Note that we use Lemma \ref{lemA.2} in the proof of the following lemma,
thus the convexity of domain $\Omega$ is required here.

\begin{lem}\label{lem4.5}
   For $n \in \mathbb{N}, t \in [0,1], h>0$, it holds 
   \begin{equation}\label{lem43}
      \frac{1}{2}\dhh\boldsymbol{A}_n^h(t) 
      + \dt\boldsymbol{V}_{\delta}(\rho_n^h(t)) 
      \leq -\lambda_\delta t \boldsymbol{A}_n^h(t),
   \end{equation}
   where 
   \begin{align*}
      \lambda_{\delta} 
      &\coloneq -\frac{1}{2\delta}\|\nabla\varphi\|_{L^\infty(\Omega)}^2 \sup_{r\geq0}|m_{\varepsilon}(r)m_{\varepsilon}^{\prime\prime}(r)|
       - \|\nabla^2\varphi\|_{L^\infty(\Omega)} \sup_{r\geq0}|m_{\varepsilon}^{\prime}(r)|\\
      &= -\frac{1}{2\delta}\|\nabla\varphi\|_{L^\infty(\Omega)}^2\frac{\alpha(1-\alpha)}{\varepsilon^{2(1-\alpha)}} 
       - \|\nabla^2\varphi\|_{L^\infty(\Omega)}\frac{\alpha}{\varepsilon^{1-\alpha}} \leq 0,
   \end{align*}
   and 
   $$\|\nabla^2\varphi\|_{L^\infty(\Omega)} 
   = \left\|\sum_{i,j=1}^d \left|\frac{\partial^2\varphi}{\partial_ix\partial_jx}\right|\right\|_{L^\infty(\Omega)}.$$
\end{lem}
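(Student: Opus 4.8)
The plan is to regard the semigroup $S_z$ from \eqref{eq46} as the $W_{m_\varepsilon}$-gradient flow of $\boldsymbol{V}_\delta$ and to establish \eqref{lem43} as the associated flow-interchange (Bochner-type) inequality, following the computation of \cite{LMS}. The first step is a reformulation: since $U_\varepsilon''m_\varepsilon=1$ (Lemma \ref{lem2.11}), one has $\delta\Delta\rho_n^h=\nabla\cdot(m_\varepsilon(\rho_n^h)\nabla(\delta U_\varepsilon'(\rho_n^h)))$, so that evaluating \eqref{eq46} along $\rho_n^h(t)=S_{ht}\rho_n(t)$ and using $\partial_h\rho_n^h(t)=t\,(\partial_z S_z\rho_n(t))|_{z=ht}$ yields
\begin{equation*}
   \partial_h\rho_n^h(t)=t\,\nabla\cdot\!\left(m_\varepsilon(\rho_n^h(t))\nabla g\right),\qquad g:=\varphi+\delta U_\varepsilon'(\rho_n^h(t)).
\end{equation*}
Here $g$ is exactly the first variation of $\boldsymbol{V}_\delta$, and the prefactor $t$ is the source of the factor $t$ on the right-hand side of \eqref{lem43}.

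Next I would compute the two terms on the left of \eqref{lem43}. Differentiating $\boldsymbol{V}_\delta(\rho_n^h(t))$ in $t$, inserting the continuity equation \eqref{ee2}, and integrating by parts (the Neumann conditions $\nabla\phi_n^h\cdot\boldsymbol{n}=\nabla\varphi\cdot\boldsymbol{n}=0$ annihilate the boundary contributions) gives $\partial_t\boldsymbol{V}_\delta(\rho_n^h(t))=\int_\Omega m_\varepsilon(\rho_n^h)\nabla\phi_n^h\cdot\nabla g\,dx$. For the action I would start from $\partial_h\boldsymbol{A}_n^h=\int_\Omega m_\varepsilon'(\rho_n^h)\,\partial_h\rho_n^h\,|\nabla\phi_n^h|^2\,dx+2\int_\Omega m_\varepsilon(\rho_n^h)\nabla\phi_n^h\cdot\nabla\partial_h\phi_n^h\,dx$; the second term, which a priori involves the unknown $\partial_h\phi_n^h$, is recast by differentiating \eqref{ee2} in $h$ and testing with $\phi_n^h$, so that it is expressed through $\partial_t(\partial_h\rho_n^h)$. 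Substituting the displayed formula for $\partial_h\rho_n^h$ and integrating by parts in space, the resulting first-order contributions recombine with $\partial_t\boldsymbol{V}_\delta(\rho_n^h(t))$, leaving a $t$-weighted ``Bochner'' remainder assembled from the Hessian $\nabla^2\varphi$ and from the derivatives $m_\varepsilon'$, $m_\varepsilon''$ of the mobility.

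It then remains to sign-control this remainder. The plan is to isolate the nonnegative dissipation produced by the $\delta$-entropy (equivalently the heat part $\delta\Delta$) together with the convexity $U_\varepsilon''\geq0$, which may be discarded as it carries the favourable sign for an upper bound and, in part, be used to absorb a cross term via Young's inequality; this absorption is precisely what manufactures the factor $1/(2\delta)$ in $\lambda_\delta$. Every spatial integration by parts generates a boundary integral whose integrand is proportional to $\nabla^2\varphi\,\nabla\varphi\cdot\boldsymbol{n}$; by Lemma \ref{lemA.2} and the convexity of $\Omega$ this is $\leq0$ and is dropped (this is where convexity of $\Omega$ enters). The two surviving interior terms are bounded by $\|\nabla\varphi\|_{L^\infty}^2\sup_{r\geq0}|m_\varepsilon m_\varepsilon''|$ and $\|\nabla^2\varphi\|_{L^\infty}\sup_{r\geq0}|m_\varepsilon'|$ times $\int_\Omega m_\varepsilon(\rho_n^h)|\nabla\phi_n^h|^2\,dx=\boldsymbol{A}_n^h(t)$, which, with the prefactor $t$, is exactly $-\lambda_\delta t\,\boldsymbol{A}_n^h(t)$.

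The step I expect to be the main obstacle is the second one: the integration-by-parts bookkeeping that differentiates the action in $h$ and sorts the outcome into (i) the recombination against $\partial_t\boldsymbol{V}_\delta$, (ii) the discarded nonnegative dissipation, (iii) the boundary term handled by Lemma \ref{lemA.2}, and (iv) the two interior errors matching $\lambda_\delta$. The nonlinearity of $m_\varepsilon$ forces a number of product-rule terms (those producing $m_\varepsilon'$ and $m_\varepsilon m_\varepsilon''$) whose coefficients must be tracked exactly, and each manipulation has to be justified; this is guaranteed by the smoothness of $\rho_n^h$ and $\phi_n^h$ obtained from Lemma \ref{lem2.10} and Propositions \ref{prop4.1}--\ref{prop4.2}, which is why so much regularity was arranged beforehand.
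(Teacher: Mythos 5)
Your proposal is correct and takes essentially the same route as the paper: the paper proves this lemma by appealing to the flow-interchange computation of \cite[Lemma 4.2, Proposition 4.6]{LMS}, with the boundary contributions signed via Lemma \ref{lemA.2} and the convexity of $\Omega$, which is exactly the argument you reconstruct (the rewriting $\delta\Delta\rho = \nabla\cdot(m_\varepsilon(\rho)\nabla(\delta U_\varepsilon'(\rho)))$ from $U_\varepsilon''m_\varepsilon=1$, the factor $t$ from differentiating $S_{ht}\rho_n(t)$ in $h$, the recombination with $\partial_t\boldsymbol{V}_\delta$, the Young absorption producing the $1/(2\delta)$ coefficient, and the two interior remainders matching the definition of $\lambda_\delta$). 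No gaps to report.
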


\quad The following lemma is the estimate like the evolution variational inequality 
for the weighted Wasserstein distance with respect to the functional $\boldsymbol{U}_\varepsilon$.
The proof is based on \cite[Lemma 3.3, Lemma 4.2]{LMS} and similar to the proof of Lemma \ref{lem4.6}.
We can easily check the required properties of $\boldsymbol{U}_\varepsilon$ (see \cite[Definition 2]{LMS})
due to the propertty of the Neumann heat semigroup, Lemma \ref{lem2.10} and Lemma \ref{lem2.11}.

\begin{lem}\label{lem4.7}
   Let $u_\tau^k \in L^{p+1-\alpha}\cap\mathcal{P}(\Omega)$.
   Then it holds 
   \begin{equation*}
      \frac{1}{2}\limsup_{h\downarrow0}\frac{W_{m_\varepsilon}(e^{h\Delta}u_\tau^k, u_\tau^{k-1})^2 - W_{m_\varepsilon}(u_\tau^k, u_\tau^{k-1})^2}{h} 
      \leq \boldsymbol{U}_\varepsilon(u_\tau^{k-1}) - \boldsymbol{U}_\varepsilon(u_\tau^k),
   \end{equation*}
   where $e^{h\Delta}$ is the Neumann heat semigroup on $\Omega$.
\end{lem}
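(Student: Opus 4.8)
The plan is to realise the Neumann heat semigroup $e^{h\Delta}$ as the $W_{m_\varepsilon}$-gradient flow of $\boldsymbol{U}_\varepsilon$ and then specialise the flow interchange machinery of Remark~\ref{rem4.4} and Lemma~\ref{lem4.5} to this situation. The algebraic reason this works is the defining relation $U_\varepsilon''(r)m_\varepsilon(r)=1$, which gives $m_\varepsilon(u)\nabla U_\varepsilon'(u)=\nabla u$, so that the nonlinear flow $\dt w=\nabla\cdot(m_\varepsilon(w)\nabla U_\varepsilon'(w))$ collapses to the heat equation $\dt w=\Delta w$ with Neumann boundary condition. Concretely I take $\varphi\equiv0$ and $\delta=1$ in Proposition~\ref{prop4.1}, so that the semigroup $S_z$ of Remark~\ref{rem4.4} becomes $e^{z\Delta}$, the functional $\boldsymbol{V}_\delta$ becomes $\boldsymbol{U}_\varepsilon$, and, crucially, the constant $\lambda_\delta$ of Lemma~\ref{lem4.5} vanishes. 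This choice is exactly the verification that $\boldsymbol{U}_\varepsilon$ satisfies the hypotheses of \cite[Definition~2]{LMS}, which one checks with the help of Lemma~\ref{lem2.10}, Lemma~\ref{lem2.11}, and the smoothing property of the heat semigroup.

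With these choices I reproduce the construction preceding Lemma~\ref{lem4.5}: via Lemma~\ref{lem2.10} I approximate $W_{m_\varepsilon}(u_\tau^k,u_\tau^{k-1})$ by smooth curves $(\rho_n,\phi_n)$ with $\rho_n(i)\to u_\tau^{k-1+i}$ in $L^{p+1-\alpha}(\Omega)$, and I flow each time slice by the heat semigroup, setting $\rho_n^h(t):=e^{ht\Delta}\rho_n(t)$ with associated potential $\phi_n^h$ as in \eqref{ee2} and action $\boldsymbol{A}_n^h(t)=\int_\Omega m_\varepsilon(\rho_n^h(t))|\nabla\phi_n^h(t)|^2\,dx$. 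Since $\lambda_\delta=0$, Lemma~\ref{lem4.5} gives $\tfrac12\dhh\boldsymbol{A}_n^h(t)+\dt\boldsymbol{U}_\varepsilon(\rho_n^h(t))\le0$. Integrating first in $t$ over $[0,1]$ and then in $h$, and using $\rho_n^h(0)=\rho_n(0)$ together with $\rho_n^h(1)=e^{h\Delta}\rho_n(1)$, I arrive at
$$W_{m_\varepsilon}\big(\rho_n(0),e^{h\Delta}\rho_n(1)\big)^2\le a_n(0)+2\int_0^h\big[\boldsymbol{U}_\varepsilon(\rho_n(0))-\boldsymbol{U}_\varepsilon(e^{s\Delta}\rho_n(1))\big]\,ds,$$
where the left-hand side is bounded by the action because $(\rho_n^h,m_\varepsilon(\rho_n^h)\nabla\phi_n^h)$ is an admissible curve, and $a_n(0)=\int_0^1\boldsymbol{A}_n^0(t)\,dt\to W_{m_\varepsilon}(u_\tau^k,u_\tau^{k-1})^2$ by \eqref{eq82}.

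It then remains to pass to the limit. The lower semicontinuity of the weighted distance (Lemma~\ref{lem2.7}) bounds $\liminf_{n\to\infty}$ of the left-hand side below by $W_{m_\varepsilon}(u_\tau^{k-1},e^{h\Delta}u_\tau^k)^2$, while on the right the strong $L^{p+1-\alpha}$, hence $L^{2-\alpha}$, convergence of Lemma~\ref{lem2.10}, the growth bound $\boldsymbol{U}_\varepsilon(u)\le\tfrac1{1-\alpha}\|u\|_{L^{2-\alpha}(\Omega)}^{2-\alpha}$ of Lemma~\ref{lem2.11}, and the $L^{2-\alpha}$-contractivity of $e^{s\Delta}$ give convergence of the $\boldsymbol{U}_\varepsilon$-terms. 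Dividing by $h$ and taking $\limsup_{h\downarrow0}$, the averaged right-hand side tends to $\boldsymbol{U}_\varepsilon(u_\tau^{k-1})-\boldsymbol{U}_\varepsilon(u_\tau^k)$, because $s\mapsto\boldsymbol{U}_\varepsilon(e^{s\Delta}u_\tau^k)$ is continuous at $s=0$: the dissipation $\boldsymbol{U}_\varepsilon(e^{s\Delta}u_\tau^k)\le\boldsymbol{U}_\varepsilon(u_\tau^k)$ combined with the lower semicontinuity of $\boldsymbol{U}_\varepsilon$ forces $\lim_{s\downarrow0}\boldsymbol{U}_\varepsilon(e^{s\Delta}u_\tau^k)=\boldsymbol{U}_\varepsilon(u_\tau^k)$. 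This yields the claimed inequality.

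I expect this limit passage to be the main obstacle. Since $W_{m_\varepsilon}$ is only lower semicontinuous, the whole chain must be oriented so that semicontinuity acts in the favourable direction on the distance term, and the convergence of the genuinely nonlinear functional $\boldsymbol{U}_\varepsilon$ along $\rho_n(i)$ cannot be obtained from weak convergence but only from the strong $L^{2-\alpha}$ convergence together with the growth control; verifying that the choice $\varphi\equiv0,\ \delta=1$ indeed produces $\lambda_\delta=0$ and that the heat flow has the regularity needed to apply Lemma~\ref{lem4.5} is the remaining point requiring care.
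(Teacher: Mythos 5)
Your proposal is correct and takes essentially the same route as the paper: the paper proves Lemma \ref{lem4.7} by citing the flow-interchange argument of Lemma \ref{lem4.6} together with Lemmas \ref{lem2.10} and \ref{lem2.11} and the properties of the Neumann heat semigroup, and your choice $\varphi\equiv0$, $\delta=1$ (so that $S_h=e^{h\Delta}$, $\boldsymbol{V}_\delta=\boldsymbol{U}_\varepsilon$, $\lambda_\delta=0$) is exactly that argument, with the time-reparametrization step correctly dropping out because $\lambda_\delta=0$. Indeed, the paper itself treats the case $\varphi=0$ of Lemma \ref{lem4.6} as being precisely Lemma \ref{lem4.7}, confirming that your specialization is the intended proof.
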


\quad The regularity of the minimizer $(u_\tau^k,v_\tau^k) \in X$
is not enough to get the weak formulation (the Euler--Lagrange equation).
Hence we need to improve the regularity of minimizers enough to converge to the weak formulation.

\begin{lem}\label{lem4.8}
   Let $(u_\tau^k,v_\tau^k) \in X$ be the minimizer of \eqref{mms}.
   Then $(u_\tau^k)^{\frac{p+1-\alpha}{2}} \in H^1(\Omega)$ 
   and $\Delta v_\tau^k - v_\tau^k + u_\tau^k \in L^2(\Omega)$.
   If $1+\alpha-2/d < p < 1+\alpha$, or $p=1+\alpha-2/d$ and $\chi>0$ is small enough,
   then $u_\tau^k \in L^2(\Omega)$.
   In addition, there exists a constant $C_0 = C_0(\alpha,p,d,\chi)>0$ such that 
   the following estimates hold
   \begin{align}
      \bullet\, &\frac{4p}{\chi(p+1-\alpha)^2}\|\nabla(u_\tau^k)^{\frac{p+1-\alpha}{2}}\|_{L^2(\Omega)}^2
       + \|\Delta v_\tau^k - v_\tau^k + u_\tau^k\|_{L^2(\Omega)}^2\notag\\
      &\leq \frac{2}{\tau\chi}(\boldsymbol{U}_\varepsilon(u_\tau^{k-1}) - \boldsymbol{U}_\varepsilon(u_\tau^k))
       + \frac{\|v_\tau^{k-1}\|_{H^1(\Omega)}^2 - \|v_\tau^k\|_{H^1(\Omega)}^2}{\tau}\notag\\
      &\quad + \|\nabla v_\tau^k\|_{L^2(\Omega)}^2 
       + C_0\left(\|u_\tau^k\|_{L^{p+1-\alpha}(\Omega)}^{p+1-\alpha} + \|u_\tau^k\|_{L^{p+1-\alpha}(\Omega)}^{\frac{p+1-\alpha}{p-\alpha}}\right),\label{eq74}\\
      \bullet\, 
      &\|u_\tau^k\|_{L^2(\Omega)}^2
       \leq \frac{4p}{\chi(p+1-\alpha)^2}\|\nabla (u_\tau^k)^{\frac{p+1-\alpha}{2}}\|_{L^2(\Omega)}^2 
        + C_0\|u_\tau^k\|_{L^{p+1-\alpha}(\Omega)}^{\frac{p+1-\alpha}{p-\alpha}}\label{eq75}\\
      &\hspace{9cm} \mathrm{if}\ 1+\alpha-\frac{2}{d} \leq p < 1+\alpha.\notag
   \end{align}
\end{lem}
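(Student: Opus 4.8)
The plan is to split the argument into the $v$-component, handled by a direct $L^2$ variation, and the $u$-component, handled by a flow interchange, and then to combine the two.

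For $v_\tau^k$, the map $v\mapsto F_\tau(u_\tau^k,v)$ is smooth and strictly convex on $H^1(\Omega)$, so the first variation along $v_\tau^k+s\zeta$ gives the weak Euler--Lagrange identity $\frac1\tau\int_\Omega(v_\tau^k-v_\tau^{k-1})\zeta\,dx+\int_\Omega(\nabla v_\tau^k\cdot\nabla\zeta+v_\tau^k\zeta-u_\tau^k\zeta)\,dx=0$ for all $\zeta\in H^1(\Omega)$; that is, $\Delta v_\tau^k-v_\tau^k+u_\tau^k=\frac1\tau(v_\tau^k-v_\tau^{k-1})$ in the weak Neumann sense. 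Since the right-hand side lies in $L^2(\Omega)$, this already yields $\Delta v_\tau^k-v_\tau^k+u_\tau^k\in L^2(\Omega)$. Testing the identity with $\zeta=v_\tau^k-v_\tau^{k-1}$ and using $a\cdot(a-b)\ge\frac12(|a|^2-|b|^2)$ for both the gradient and the $L^2$ terms converts it into a telescoping bound for $\|\Delta v_\tau^k-v_\tau^k+u_\tau^k\|_{L^2}^2$ in terms of $(\|v_\tau^{k-1}\|_{H^1}^2-\|v_\tau^k\|_{H^1}^2)/\tau$ plus a mixed term $\frac1\tau\int_\Omega u_\tau^k(v_\tau^k-v_\tau^{k-1})\,dx$.

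For $u_\tau^k$ I would use the flow interchange with the Neumann heat semigroup $e^{h\Delta}$: inserting the competitor $(e^{h\Delta}u_\tau^k,v_\tau^k)$ into the minimality of $F_\tau$, dividing by $h$ and letting $h\downarrow0$, Lemma~\ref{lem4.7} bounds the resulting metric increment by $\boldsymbol{U}_\varepsilon(u_\tau^{k-1})-\boldsymbol{U}_\varepsilon(u_\tau^k)$, while the energy side produces the dissipation of $E$ along the heat flow. An integration by parts (using $u^{p-1-\alpha}|\nabla u|^2=\frac{4}{(p+1-\alpha)^2}|\nabla u^{(p+1-\alpha)/2}|^2$ and the Neumann conditions) evaluates this dissipation as $\frac{4p}{\chi(p+1-\alpha)^2}\|\nabla(u_\tau^k)^{(p+1-\alpha)/2}\|_{L^2}^2-\int_\Omega\nabla v_\tau^k\cdot\nabla u_\tau^k\,dx$. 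Because $u_\tau^k$ is a priori only in $L^{p+1-\alpha}\cap\mathcal{P}(\Omega)$, the rigorous version reads the Dirichlet-type quantity off the smooth flow $e^{h\Delta}u_\tau^k$, where it is well defined, and passes to the limit by lower semicontinuity; this is precisely what simultaneously establishes $(u_\tau^k)^{(p+1-\alpha)/2}\in H^1(\Omega)$ and the inequality controlling this quantity by $(\boldsymbol{U}_\varepsilon(u_\tau^{k-1})-\boldsymbol{U}_\varepsilon(u_\tau^k))/\tau$ plus the same mixed term.

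The main obstacle is the mixed term $\int_\Omega\nabla v_\tau^k\cdot\nabla u_\tau^k\,dx$, which occurs in both estimates and cannot be controlled directly since $\nabla u_\tau^k$ is unavailable when $p<1+\alpha$. I would integrate it by parts as $-\int_\Omega u_\tau^k\Delta v_\tau^k\,dx$ (legitimate once $\Delta v_\tau^k-v_\tau^k+u_\tau^k\in L^2$) and substitute $\Delta v_\tau^k=\frac1\tau(v_\tau^k-v_\tau^{k-1})+v_\tau^k-u_\tau^k$; adding the $u$- and $v$-estimates then cancels the $\frac1\tau\int u_\tau^k(v_\tau^k-v_\tau^{k-1})\,dx$ contributions and leaves the combination $\|u_\tau^k\|_{L^2}^2-\int_\Omega u_\tau^k v_\tau^k\,dx$ on the right. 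When $p=1+\alpha$ the first term is exactly $\|u_\tau^k\|_{L^{p+1-\alpha}}^{p+1-\alpha}$ and is already of lower order; when $p<1+\alpha$ it is controlled by the Gagliardo--Nirenberg estimate \eqref{eq75}, which I would prove first and independently. Writing $w=(u_\tau^k)^{(p+1-\alpha)/2}\in H^1(\Omega)$ and using $\|u_\tau^k\|_{L^1}=1$, one interpolates $\|u_\tau^k\|_{L^2}^2=\|w\|_{L^{4/(p+1-\alpha)}}^{4/(p+1-\alpha)}$ between $\|\nabla w\|_{L^2}$ and $\|u_\tau^k\|_{L^1}$ (via $\|w\|_{L^{2/(p+1-\alpha)}}^{2/(p+1-\alpha)}=\|u_\tau^k\|_{L^1}=1$); the hypothesis $p\ge1+\alpha-2/d$ is exactly the threshold at which the resulting power of $\|\nabla w\|_{L^2}$ does not exceed $2$, so Young's inequality closes the bound, the critical case $p=1+\alpha-2/d$ forcing the smallness of $\chi$ consistently with Lemma~\ref{lem3.1}. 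The leftover $-\int_\Omega u_\tau^k v_\tau^k\,dx$ is then dominated by the H\"older--interpolation inequality of Lemma~\ref{lem3.1}, which is where the $\|\nabla v_\tau^k\|_{L^2}^2$ term and the powers $\|u_\tau^k\|_{L^{p+1-\alpha}}^{p+1-\alpha}$, $\|u_\tau^k\|_{L^{p+1-\alpha}}^{(p+1-\alpha)/(p-\alpha)}$ on the right-hand side of \eqref{eq74} originate. The delicate point throughout is that every integration by parts is justified only after the matching regularity has been secured, so the qualitative claims, \eqref{eq75}, and \eqref{eq74} must be proved in that order.
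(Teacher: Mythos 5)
Your proposal is correct and follows essentially the same route as the paper's proof, which simply invokes \cite[Proposition 8]{BL} (``the proof is almost same'') and only sketches the heat-semigroup flow interchange for the $H^1$ claim. Your decomposition --- direct $L^2$-variation for $v_\tau^k$, flow interchange via Lemma \ref{lem4.7} for $u_\tau^k$, integration by parts of the mixed term against the Euler--Lagrange equation for $v_\tau^k$ with the resulting cancellation of the $\frac{1}{\tau}\int_\Omega u_\tau^k(v_\tau^k-v_\tau^{k-1})\,dx$ terms, and the Gagliardo--Nirenberg interpolation for \eqref{eq75} with $p\geq 1+\alpha-2/d$ as the exact threshold (small $\chi$ at criticality) --- is precisely the argument of \cite{BL} to which the paper defers, so it matches the intended proof rather than replacing it.
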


\begin{proof} 
   The proof is almost same for \cite[Proposition 8]{BL}
   although the energy functional and the distance (the weighted Wasserstein distance) 
   are different from theirs.
   Hence, we only point out the key idea for proving $(u_\tau^k)^{\frac{p+1-\alpha}{2}} \in H^1(\Omega)$.
   Considering the Neumann heat equation with initial data $u_\tau^k$,
   we have formally
   \begin{align*}
      \frac{d}{dt}E(e^{t\Delta}u_\tau^k,v_\tau^k)
      &= \frac{p}{\chi(p-\alpha)}\int_\Omega (e^{t\Delta}u_\tau^k)^{p-\alpha}(\Delta e^{t\Delta}u_\tau^k)\, dx
       - \int_{\Omega} (\Delta e^{t\Delta}u_\tau^k) v_\tau^k\, dx\\
      &= -\frac{p}{\chi}\int_{\Omega} (e^{t\Delta}u_\tau^k)^{p-1-\alpha}|\nabla e^{t\Delta}u_\tau^k|^2\, dx
       + \int_{\Omega} \nabla e^{t\Delta}u_\tau^k\cdot \nabla v_\tau^k\, dx\\
      &= -\frac{4p}{\chi(p+1-\alpha)^2} \|\nabla(e^{t\Delta}u_\tau^k)^{\frac{p+1-\alpha}{2}}\|_{L^2(\Omega)}^2 
       + \int_{\Omega} \nabla e^{t\Delta}u_\tau^k\cdot \nabla v_\tau^k\, dx.
   \end{align*}
   By H\"older's inequality and Young's inequality, 
   the second term can be absorbed by the first term.
   Using \eqref{eq37} and Lemma \ref{lem4.7}, we obtain
   \begin{align*}
      \|\nabla(e^{t\Delta}u_\tau^k)^{\frac{p+1-\alpha}{2}}\|_{L^2(\Omega)}^2 
      \leq C\left(\frac{\boldsymbol{U}_\varepsilon(u_\tau^{k-1}) - \boldsymbol{U}_\varepsilon(u_\tau^k)}{\tau}
       + \|\nabla v_\tau^k\|_{L^2(\Omega)}^2\right) < \infty
       \quad \mathrm{for}\ t\in (0,1).
   \end{align*}
   Since $e^{t\Delta}u_\tau^k \in C([0,1];L^{p+1-\alpha}(\Omega))$, 
   that is, $(e^{t\Delta}u_\tau^k)^{\frac{p+1-\alpha}{2}} \in C([0,1];L^2(\Omega))$,
   combining this with the above boundedness for $t\in(0,1)$,
   we have $\nabla (u_\tau^k)^{\frac{p+1-\alpha}{2}} \in L^2(\Omega)$,
   then $(u_\tau^k)^{\frac{p+1-\alpha}{2}} \in H^1(\Omega)$.
\end{proof}

\begin{lem}\label{lem7.1}
   Under the same assumption in Lemma \ref{lem4.8}, 
   it holds $u_\tau^k \in W^{1,p+1-\alpha}(\Omega)$.
\end{lem}

\begin{proof}
   Set $y = (u_\tau^k)^{\frac{p+1-\alpha}{2}}$,
   then by Lemma \ref{lem4.8}, it holds $y\in H^1(\Omega)$.
   Since $2/(p+1-\alpha) \geq 1$, 
   for $f \in C_c^\infty(\Omega)$, we have
   \begin{align*}
      \int_{\Omega} u_\tau^k \nabla f\, dx
      = \int_{\Omega} y^{\frac{2}{p+1-\alpha}}\nabla f\, dx
      &= - \int_{\Omega} (\nabla y^{\frac{2}{p+1-\alpha}}) f\, dx\\
      &= - \int_{\Omega} \left(\frac{2}{p+1-\alpha}y^{\frac{2}{p+1-\alpha} - 1}\nabla w\right) f\, dx\\
      &= - \int_{\Omega} \left(\frac{2}{p+1-\alpha} (u_\tau^k)^{\frac{1+\alpha-p}{2}}\nabla (u_\tau^k)^{\frac{p+1-\alpha}{2}}\right)f\, dx.
   \end{align*}
   Here, by H\"older's inequality, it follows
   \begin{align*}
      &\int_{\Omega} |(u_\tau^k)^{\frac{1+\alpha-p}{2}}\nabla (u_\tau^k)^{\frac{p+1-\alpha}{2}}|^{p+1-\alpha}\, dx\notag\\
      &\leq \left(\int_{\Omega} (u_\tau^k)^{p+1-\alpha}\, dx\right)^{\frac{1+\alpha-p}{2}}
       \left(\int_{\Omega} |\nabla (u_\tau^k)^{\frac{p+1-\alpha}{2}}|^2\, dx\right)^{\frac{p+1-\alpha}{2}}.
   \end{align*}
   Since $u_\tau^k \in L^{p+1-\alpha}(\Omega)$ and $\nabla(u_\tau^k)^{\frac{p+1-\alpha}{2}} \in L^2(\Omega)$,
   we have $(u_\tau^k)^{\frac{1+\alpha-p}{2}}\nabla (u_\tau^k)^{\frac{p+1-\alpha}{2}} \in L^{p+1-\alpha}(\Omega)$. 
   This means that $\nabla u_\tau^k \in L^{p+1-\alpha}(\Omega)$, that is, $u_\tau^k \in W^{1,p+1-\alpha}(\Omega)$.
\end{proof}

\quad Next lemma is the estimate like the evolution variational inequality for the weighted Wasserstein distance
with respect to the functional $\boldsymbol{V}_\delta$. 
The proof is fundamentally based on \cite[Lemma 3.3, Proposition 4.6]{LMS}, 
but our minimizers have the lower regularity than theirs.
Thus we check the required properties of $\boldsymbol{V}_\delta$ (see \cite[Definition 2]{LMS}).
This lemma helps us to obtain the weak formulation (Lemma \ref{lem4.9}).

\begin{lem}\label{lem4.6}
   Let $1+\alpha-2/d \leq p \leq 1+\alpha$ and 
   $u_\tau^k \in L^{p+1-\alpha}\cap\mathcal{P}(\Omega)$ with $(u_\tau^k)^{\frac{p+1-\alpha}{2}} \in H^1(\Omega)$.
   Then it holds 
   \begin{equation*}
      \frac{1}{2}\limsup_{h\downarrow0}\frac{W_{m_\varepsilon}(S_hu_\tau^k, u_\tau^{k-1})^2 - W_{m_\varepsilon}(u_\tau^k, u_\tau^{k-1})^2}{h}
      + \frac{\lambda_\delta}{2}W_{m_\varepsilon}(u_\tau^k, u_\tau^{k-1})^2
      + \boldsymbol{V}_\delta(u_\tau^k) \leq \boldsymbol{V}_\delta(u_\tau^{k-1}),
   \end{equation*} 
      where $\lambda_\delta$ is defined in Lemma \ref{lem4.5}.
\end{lem}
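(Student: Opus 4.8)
The plan is to run the flow interchange argument of \cite{LMS} on the smooth approximating curves $\rho_n^h$ from Remark \ref{rem4.4}, and then pass to the limit $n\to\infty$ followed by $h\downarrow0$, using the lower semicontinuity of the weighted Wasserstein distance to transfer the estimates from the approximations to $u_\tau^k$ and $u_\tau^{k-1}$. Recall that $t\mapsto\rho_n^h(t)=S_{ht}\rho_n(t)$, together with the potential $\phi_n^h(t)$ from \eqref{ee2}, is admissible in $CE(0,1;\rho_n^h(0)\to\rho_n^h(1))$, so by Definition \ref{dfn2.4},
$$W_{m_\varepsilon}(\rho_n^h(0),\rho_n^h(1))^2 \leq \int_0^1 \boldsymbol{A}_n^h(t)\,dt =: G_n(h).$$
Its endpoints are $\rho_n^h(0)=\rho_n(0)\to u_\tau^{k-1}$ and $\rho_n^h(1)=S_h\rho_n(1)\to S_hu_\tau^k$, the latter by the continuous dependence of the flow of Propositions \ref{prop4.1}--\ref{prop4.2} on its initial datum (here $S_hu_\tau^k$ is the flow issued from $u_\tau^k$, well defined thanks to the regularity $(u_\tau^k)^{\frac{p+1-\alpha}{2}}\in H^1(\Omega)$). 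Hence Lemma \ref{lem2.7} gives the upper bound $W_{m_\varepsilon}(S_hu_\tau^k,u_\tau^{k-1})^2\leq\liminf_{n\to\infty}G_n(h)$, while \eqref{eq82} gives $G_n(0)\to W_{m_\varepsilon}(u_\tau^k,u_\tau^{k-1})^2$.

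First I would integrate the differential inequality of Lemma \ref{lem4.5} in $t$ over $[0,1]$ and then in $h$. Since $\int_0^1 \partial_t\boldsymbol{V}_\delta(\rho_n^h(t))\,dt=\boldsymbol{V}_\delta(\rho_n^h(1))-\boldsymbol{V}_\delta(\rho_n^h(0))$ and $\rho_n^h(0)=\rho_n(0)$ is independent of $h$, this yields
$$\tfrac12\big(G_n(h)-G_n(0)\big) \leq \int_0^h\big[\boldsymbol{V}_\delta(\rho_n(0))-\boldsymbol{V}_\delta(S_{h'}\rho_n(1))\big]\,dh' -\lambda_\delta\int_0^h\!\!\int_0^1 t\,\boldsymbol{A}_n^{h'}(t)\,dt\,dh'.$$
Now I would send $n\to\infty$ for fixed $h$. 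On the left, the bound $W_{m_\varepsilon}(S_hu_\tau^k,u_\tau^{k-1})^2\leq\liminf_n G_n(h)$ together with $G_n(0)\to W_{m_\varepsilon}(u_\tau^k,u_\tau^{k-1})^2$ controls $\tfrac12 W_{m_\varepsilon}(S_hu_\tau^k,u_\tau^{k-1})^2-\tfrac12 W_{m_\varepsilon}(u_\tau^k,u_\tau^{k-1})^2$. On the right, $\boldsymbol{V}_\delta(\rho_n(0))\to\boldsymbol{V}_\delta(u_\tau^{k-1})$ because $\rho_n(0)\to u_\tau^{k-1}$ in $L^{p+1-\alpha}(\Omega)\subset L^{2-\alpha}(\Omega)$ and $\boldsymbol{U}_\varepsilon$ is continuous along such convergence by its explicit form and Lemma \ref{lem2.11}; whereas $\liminf_n\boldsymbol{V}_\delta(S_{h'}\rho_n(1))\geq\boldsymbol{V}_\delta(S_{h'}u_\tau^k)$ by the lower semicontinuity of $\boldsymbol{V}_\delta$ (Lemma \ref{lem2.11}), which upgrades through Fatou's lemma in $h'$.

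Dividing the resulting inequality by $h$ and letting $h\downarrow0$ then finishes the proof. The averaged functional term satisfies $\tfrac1h\int_0^h\boldsymbol{V}_\delta(S_{h'}u_\tau^k)\,dh'\to\boldsymbol{V}_\delta(u_\tau^k)$ by the right continuity of $h'\mapsto S_{h'}u_\tau^k$ at $h'=0$, while the weighted action term $-\lambda_\delta\,\tfrac1h\int_0^h\int_0^1 t\,\boldsymbol{A}_n^{h'}(t)\,dt\,dh'$ tends to $-\tfrac{\lambda_\delta}{2}W_{m_\varepsilon}(u_\tau^k,u_\tau^{k-1})^2$. This last limit is exactly where the factor $\tfrac12$ in the statement originates: the minimizer $(\mu_t,\boldsymbol{\nu}_t)$ realizing $W_{m_\varepsilon}(u_\tau^k,u_\tau^{k-1})$ is a constant-speed minimizing curve, so $\Psi_\Omega(\mu_t,\boldsymbol{\nu}_t)$ equals $W_{m_\varepsilon}(u_\tau^k,u_\tau^{k-1})^2$ for a.e.\ $t$, whence $\int_0^1 t\,\boldsymbol{A}_n^0(t)\,dt\to W_{m_\varepsilon}(u_\tau^k,u_\tau^{k-1})^2\int_0^1 t\,dt=\tfrac12 W_{m_\varepsilon}(u_\tau^k,u_\tau^{k-1})^2$. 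Collecting the three contributions and moving $\tfrac{\lambda_\delta}{2}W_{m_\varepsilon}(u_\tau^k,u_\tau^{k-1})^2$ and $\boldsymbol{V}_\delta(u_\tau^k)$ to the left gives precisely the claimed inequality.

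The hard part will be the weighted action limit, namely $\lim_{h\downarrow0}\limsup_{n\to\infty}\tfrac1h\int_0^h\int_0^1 t\,\boldsymbol{A}_n^{h'}(t)\,dt\,dh'=\tfrac12 W_{m_\varepsilon}(u_\tau^k,u_\tau^{k-1})^2$. The subtlety is that lower semicontinuity only furnishes the one-sided bound $W_{m_\varepsilon}(\rho_n^h(0),\rho_n^h(1))^2\leq G_n(h)$, so recovering the exact constant $\tfrac12$ forces one to exploit the \emph{slicewise} equidistribution of the action, i.e.\ the constant-speed property of the minimizer together with the fact that the space–time mollifications $\rho_n,\phi_n$ recover $\Psi_\Omega(\mu_t,\boldsymbol{\nu}_t)$ for each $t$ and not merely after integration in $t$ as in \eqref{eq82}; uniformity of the passage $h'\to0$ in $n$ must be handled with care. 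A second, more routine, obstacle is to confirm that $\boldsymbol{V}_\delta$ satisfies the structural hypotheses underlying the flow interchange method (the analogue of \cite[Definition 2]{LMS}) despite the low regularity of $u_\tau^k$; here the continuity of $\boldsymbol{U}_\varepsilon$ from Lemma \ref{lem2.11}, the smoothness of $S_z\rho_n(t)$ from Propositions \ref{prop4.1}--\ref{prop4.2}, and the hypothesis $(u_\tau^k)^{\frac{p+1-\alpha}{2}}\in H^1(\Omega)$ are what make the argument go through.
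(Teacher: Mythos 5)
There is a genuine gap, and it sits exactly where you flagged ``the hard part'': your argument needs
$\limsup_{h\downarrow0}\limsup_{n\to\infty}\frac1h\int_0^h\int_0^1 t\,\boldsymbol{A}_n^{h'}(t)\,dt\,dh'\le\frac12 W_{m_\varepsilon}(u_\tau^k,u_\tau^{k-1})^2$, but your justification (constant-speed minimizers plus slicewise recovery by mollification) only addresses the unperturbed actions $\boldsymbol{A}_n^{0}(t)$, not the perturbed ones $\boldsymbol{A}_n^{h'}(t)$ with $h'>0$ that actually appear. To compare $\boldsymbol{A}_n^{h'}(t)$ with $\boldsymbol{A}_n^{0}(t)$ you must integrate Lemma \ref{lem4.5} in $h$, and after multiplying by $t$ and integrating by parts in $t$ this produces the term $\int_0^{h'}\int_0^1\boldsymbol{V}_\delta(\rho_n^{s}(t))\,dt\,ds$, i.e.\ an \emph{upper} bound on $\boldsymbol{V}_\delta$ (equivalently on $\boldsymbol{U}_\varepsilon$) along the interpolating curve at intermediate times $t$, uniform in $n$. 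No such bound exists: the optimal curve $\mu_t$ of Lemma \ref{lem2.6} is only a family of Radon measures with finite action, it need not satisfy $\boldsymbol{U}_\varepsilon(\mu_t)<\infty$, so $\boldsymbol{U}_\varepsilon(\rho_n(t))$ (and hence $\boldsymbol{U}_\varepsilon(\rho_n^s(t))$) can blow up as $n\to\infty$. Only the lower bound $\boldsymbol{V}_\delta\ge -L$ is available. If you avoid intermediate times by the crude estimate $\int_0^1 t\,\boldsymbol{A}_n^{h'}(t)\,dt\le\int_0^1\boldsymbol{A}_n^{h'}(t)\,dt=G_n(h')$ (which, via Gronwall on $G_n$, is uniformly controlled because the $\boldsymbol{V}_\delta$-terms then telescope to endpoint values), you lose the factor $\tfrac12$ and end up proving the inequality with $\lambda_\delta$ in place of $\lambda_\delta/2$ — a strictly weaker statement, since $\lambda_\delta<0$.

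This is precisely the difficulty the paper's proof is engineered to avoid. There, one multiplies \eqref{lem43} by the weight $e^{2\lambda_\delta th}$ \emph{before} integrating in $t$; after integration by parts the intermediate-time values of $\boldsymbol{V}_\delta$ enter only through $2\lambda_\delta h\int_0^1 e^{2\lambda_\delta th}(\boldsymbol{V}_\delta(\rho_n^h(t))+L)\,dt\le0$, which has a favorable sign and is discarded using only the lower bound $-L$. The price of the non-constant weight is then paid by reparametrizing the curve $t\mapsto\rho_n^h(t)$ in time (the $\tilde\theta$ construction with $\theta(t)=e^{2\lambda_\delta th}$), which yields $W_{m_\varepsilon}(\rho_n(0),S_h\rho_n(1))^2\le\frac{e^{-2\lambda_\delta h}-1}{-2\lambda_\delta h}\int_0^1 e^{2\lambda_\delta th}\boldsymbol{A}_n^h(t)\,dt$; the constants $\tfrac12$ and $\tfrac{\lambda_\delta}{2}$ then come out of Taylor expansions of the exponential prefactors as $h\downarrow0$, with no equidistribution property of the minimizer needed at all. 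Your treatment of the endpoint terms (reverse Fatou in $h'$ using $\boldsymbol{V}_\delta\ge-L$, $L^2$-stability of the flow $S$, lower semicontinuity of $\boldsymbol{U}_\varepsilon$) is fine and matches the paper; but without the exponential-weight/reparametrization device, or some genuinely new uniform-in-$n$ control of $\boldsymbol{A}_n^{h'}(t)$ slicewise in $t$, the proposed route does not prove the stated inequality.
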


\begin{proof}
   If $\varphi=0$, that is, $\lambda_\delta = 0$ 
   then we complete the proof by Lemma \ref{lem4.7}.
   Thus we can assume that $\varphi \not\equiv 0$ and then $\lambda_\delta\ne0$. 
   Since $\rho_n^0(t) = S_0\rho_n(t) = \rho_n(t)$ in $\Omega$, 
   the definition of $\boldsymbol{A}_n^h$ and \eqref{eq82} imply 
   \begin{equation}\label{wa2}
      W_{m_\varepsilon}(u_\tau^k, u_\tau^{k-1})^2 
      = \lim_{n\to\infty}a_n^d\int_0^1 \boldsymbol{A}_n^0(t)\, dt.
   \end{equation}
   Since $\|\rho_n^h(t)\|_{L^1(\Omega)} = \|\rho_n^0(t)\|_{L^1(\Omega)} = \|\rho_n(t)\|_{L^1(\Omega)} = a_n^{-d}$
   for all $n\in\mathbb{N}, h\in(0,1), t\in[0,1]$ and $\boldsymbol{U}_\varepsilon \geq0$,
   we have
   \begin{align*}
      \boldsymbol{V}_\delta(\rho_n^h(t)) 
      &= \int_{\Omega} \rho_n^h(t)\varphi\, dx + \delta\boldsymbol{U}_\varepsilon(\rho_n^h(t))\\
      &\geq - \|\varphi\|_{L^\infty(\Omega)}\|\rho_n^h(t)\|_{L^1(\Omega)}\\
      &= -a_n^{-d}\|\varphi\|_{L^\infty(\Omega)}. 
   \end{align*}
   Since $a_n \to 1$ as $n\to\infty$, that is, $\{a_n\}_n$ is bounded,  
   there exists a constant $L>0$ such that
   $\boldsymbol{V}_{\delta}(\rho_n^h(t)) \geq -L$ for all $n\in\mathbb{N}, h\in(0,1), t\in[0,1]$.

   Multiplying \eqref{lem43} by $e^{2\lambda_\delta th}$ 
   and integrating with respect to $t\in[0,1]$,
   further using integration by parts, we have
   \begin{align*}
      \frac{1}{2}\dhh\int_0^1 e^{2\lambda_\delta th}\boldsymbol{A}_n^h(t)\, dt 
      &\leq - \int_0^1 e^{2\lambda_\delta th}\dt(\boldsymbol{V}_\delta(\rho_n^h(t)) + L)\, dt\\
      &= \boldsymbol{V}_{\delta}(\rho_n^h(0)) + L - e^{2\lambda_\delta h}(\boldsymbol{V}_{\delta}(\rho_n^h(1)) + L)\\
      &\quad + 2\lambda_\delta h\int_0^1 e^{2\lambda_\delta th}(\boldsymbol{V}_{n,\delta}(\rho_n^h(t)) + L)\, dt.
   \end{align*}
   Since $\boldsymbol{V}_{\delta}(\rho_n^h(t)) + L \geq 0$ for any $t \in [0,1]$ 
   and $\lambda_\delta < 0$, it follows
   \begin{equation*}
      \frac{1}{2}\dhh\int_0^1 e^{2\lambda_\delta th}\boldsymbol{A}_n^h(t)\, dt 
      \leq \boldsymbol{V}_{n,\delta}(\rho_n^h(0)) + L - e^{2\lambda_\delta h}(\boldsymbol{V}_{n,\delta}(\rho_n^h(1)) + L).
   \end{equation*}
   Observe that $\rho_n^h(0) = \rho_n(0)$, 
   integrating over $(0,h)$, we have
   \begin{equation*}
      \frac{1}{2}\int_0^1 e^{2\lambda_\delta th}\boldsymbol{A}_n^h(t)\, dt 
      \leq \frac{1}{2}\int_0^1 \boldsymbol{A}_n^0\, dt 
      + h(\boldsymbol{V}_{\delta}(\rho_n(0)) + L) 
      - \int_0^h e^{2\lambda_\delta s}(\boldsymbol{V}_{\delta}(\rho_n^s(1)) + L)\, ds.
   \end{equation*}
   Here the function $s \mapsto \boldsymbol{V}_\delta(\rho_n^s(1)) + L$ is nonincreasing.
   Indeed, calculating the derivative and using \eqref{eq46} with $t=1$ and 
   $U_\varepsilon^{\prime\prime}(r)m_\varepsilon(r) = 1$ for $r\geq0$, 
   we obtain 
   \begin{align*}
      \frac{d}{ds}\boldsymbol{V}_{\delta}(\rho_n^s(1)) 
      &= \frac{d}{ds}\left(\int_{\Omega} \rho_n^s(1)\varphi\, dx + \delta\int_{\Omega} U_\varepsilon(\rho_n^s(1))\, dx\right)\\
      &= \int_{\Omega} (\nabla\cdot(m_\varepsilon(\rho_n^s(1))\nabla\varphi) + \delta\Delta\rho_n^s(1))\varphi\, dx\\
      &\quad + \delta\int_{\Omega} U_\varepsilon^\prime(\rho_n^s(1))(\nabla\cdot(m_\varepsilon(\rho_n^s(1))\nabla\varphi) + \delta\Delta\rho_n^s(1))\, dx\\
      &= - \int_{\Omega} (m_\varepsilon(\rho_n^s(1))\nabla\varphi + \delta\nabla\rho_n^s(1))\cdot\nabla\varphi\, dx\\
      &\quad - \delta\int_{\Omega} U_\varepsilon^{\prime\prime}(\rho_n^s(1))\nabla\rho_n^s(1)\cdot(m_\varepsilon(\rho_n^s(1))\nabla\varphi + \delta\nabla\rho_n^s(1))\, dx\\
      &= - \int_{\Omega} m_\varepsilon(\rho_n^s(1))|\nabla\varphi|^2\, dx
       - \delta\int_{\Omega} \nabla\rho_n^s(1)\cdot\nabla\varphi\, dx\\
      &\quad - \delta\int_{\Omega} \nabla\rho_n^s(1)\cdot\nabla\varphi\, dx
       - \delta^2\int_{\Omega} \frac{|\nabla\rho_n^s(1)|^2}{m(\rho_n^s(1))}\, dx\\
      &= - \int_{\Omega} \left|m_\varepsilon(\rho_n^s(1))^{\frac{1}{2}}\nabla\varphi + \frac{\delta\nabla\rho_n^s(1)}{m_\varepsilon(\rho_n^s(1))^{\frac{1}{2}}}\right|^2\, dx \leq 0.
   \end{align*}
   Thus it follows  
   \begin{equation}\label{eq7}
      \frac{1}{2}\int_0^1 e^{2\lambda_\delta th} \boldsymbol{A}_n^h(t)\, dt 
      \leq \frac{1}{2}\int_0^1 \boldsymbol{A}_n^0(t)\, dt 
      + h(\boldsymbol{V}_{\delta}(\rho_n(0)) + L)
      - \frac{1 - e^{2\lambda_\delta h}}{-2\lambda_\delta}(\boldsymbol{V}_{\delta}(\rho_n^h(1)) + L).
   \end{equation}
   On the other hand, 
   for a decreasing function $\theta \in C^1([0,1])$ with $\theta > 0$,
   we define a increasing function
   \begin{align*}
      \tilde{\theta}(t) 
       \coloneq \left[\int_0^1 \frac{dz}{\theta(z)}\right]^{-1}\int_0^t \frac{dz}{\theta(z)}
       \quad \mathrm{for}\ t\in[0,1],
   \end{align*}
   and denote $\tilde{\theta}^{-1}$ an inverse function of $\tilde{\theta}$,
   that is,
   $\tilde{\theta}\circ\tilde{\theta}^{-1}(t) = \tilde{\theta}^{-1}\circ\tilde{\theta}(t) = t$ for $t \in [0,1]$.
   Then the pair 
   $\left(\rho_n^h(\cdot,\tilde{\theta}^{-1}(\cdot)),m_\varepsilon(\rho_n^h(\cdot,\tilde{\theta}^{-1}(\cdot)))\nabla\{(\tilde{\theta}^{-1})^\prime(\cdot)\phi_n^h(\cdot,\tilde{\theta}^{-1}(\cdot))\}\right)$
   belongs to $CE(0,1;\rho_n(0),\rho_n^h(1))$.
   Indeed, by \eqref{ee2}, we have
   \begin{align*}
      \dt[\rho_n^h(x, \tilde{\theta}^{-1}(t))]
      &= (\dt\rho_n^h)(x, \tilde{\theta}^{-1}(t))(\tilde{\theta}^{-1})^\prime(t)\\
      &= \left[-\nabla\cdot\left\{m_\varepsilon\left(\rho_n^h(x,\tilde{\theta}^{-1}(t))\right)\nabla\phi_n^h(x, \tilde{\theta}^{-1}(t))\right\}\right](\tilde{\theta}^{-1})^\prime(t)\\
      &= -\nabla\cdot\left\{m_\varepsilon\left(\rho_n^h(x,\tilde{\theta}^{-1}(t))\right)\nabla\left((\tilde{\theta}^{-1})^\prime(t)\phi_n^h(x, \tilde{\theta}^{-1}(t))\right)\right\},
   \end{align*}
   hence they satisfy the continuity equation.
   In addition, since $\tilde{\theta}(0) = 0$ and $\tilde{\theta}(1) = 1$, 
   it also holds $\tilde{\theta}^{-1}(0) = 0$ and $\tilde{\theta}^{-1}(1) = 1$,
   thus we have
   \begin{align*}
      &\rho_n^h(x, \tilde{\theta}^{-1}(0)) = \rho_n^h(x, 0) = \rho_n(x,0),
      \quad \rho_n^h(x, \tilde{\theta}^{-1}(1)) = \rho_n^h(x, 1)\quad \mathrm{for}\ x\in \Omega.
   \end{align*}
   Note that $\rho_n^h(1) = S_h\rho_n(1)$.
   By the definition of the weighted Wasserstein distance (see Definition \ref{dfn2.4})
   and the change of variables, it follows
   \begin{flalign*}
      W_{m_\varepsilon}(\rho_n(0), S_h\rho_n(1))^2
      &\leq \int_0^1\int_{\Omega} \left((\tilde{\theta}^{-1})^\prime(z)\right)^2m_\varepsilon\left(\rho_n^h(x,\tilde{\theta}^{-1}(z))\right)|\nabla\phi_n^h(x, \tilde{\theta}^{-1}(z))|^2\, dx\,dz\\
      &= \int_0^1\int_{\Omega} \left((\tilde{\theta}^{-1})^\prime(\tilde{\theta}(t))\right)m_\varepsilon(\rho_n^h(x, t))|\nabla\phi_n^h(x, t)|^2\, dx\,dt\\
      &= \int_0^1\frac{dr}{\theta(r)}\int_0^1 \theta(t)\boldsymbol{A}_n^h(t)\, dt,
   \end{flalign*}
   where we used
   \begin{equation*}
      (\tilde{\theta}^{-1})^\prime(\tilde{\theta}(t)) 
      = \frac{1}{\tilde{\theta}^\prime(t)} 
      = \int_0^1\frac{dr}{\theta(r)}\theta(t).
   \end{equation*}
   Hence, choosing $\theta(t) = e^{2\lambda_\delta th}$, we obtain
   \begin{equation*}
      W_{m_\varepsilon}(\rho_n(0), S_h\rho_n(1))^2 
      \leq \frac{e^{-2\lambda_\delta h} - 1}{-2\lambda_\delta h}\int_0^1 e^{2\lambda_\delta th}\boldsymbol{A}_n^h(t)\, dt.
   \end{equation*}
   Combining the above with \eqref{eq7}, we have
   \begin{flalign}\label{eq8}
      &\frac{-\lambda_\delta h}{e^{-2\lambda_\delta h} - 1}W_{m_\varepsilon}(\rho_n(0), S_h\rho_n(1))^2\notag\\
      &\leq \frac{1}{2}\int_0^1 \boldsymbol{A}_n^0(t)\, dt
       + h(\boldsymbol{V}_{\delta}(\rho_n(0)) + L)
       - \frac{1 - e^{2\lambda_\delta h}}{-2\lambda_\delta}(\boldsymbol{V}_{\delta}(S_{\delta,h}\rho_n(1)) + L).
   \end{flalign}
   Let $S_hu_\tau^k$ be a solution in Propositions \ref{prop4.1} and \ref{prop4.2} with $w_0 = u_\tau^k$.
   We will show 
   \begin{equation*}
      S_h\rho_n(1) \to S_hu_\tau^k\ \mathrm{in}\ L^2(\Omega)\quad \mathrm{as}\ n \to \infty
      \ \mathrm{for}\ h\in(0,1). 
   \end{equation*}
   Since $y \coloneq S_h\rho_n(1) - S_hu_\tau^k$ satisfies the following equation
   \begin{align*}
      \begin{cases}
         \dhh y = \delta\Delta y + \nabla\cdot[(m_\varepsilon(S_h\rho_n(1)) - m_\varepsilon(S_hu_\tau^k))\nabla\varphi]
         &\mathrm{in}\ \Omega\times(0,1),\\
         \nabla y\cdot\boldsymbol{n} = 0
         &\mathrm{on}\ \partial\Omega\times(0,1),\\
         y(0) = \rho_n(1) - u_\tau^k
         &\mathrm{in}\ L^2(\Omega),
      \end{cases}
   \end{align*}
   multiplying the first equation by $y$ and integrating in $\Omega$, we have for $h\in (0,1)$
   \begin{align*}
      \int_{\Omega} (\dhh y)y\, dx 
      = \int_{\Omega} [\delta \Delta y + \nabla\cdot\{(m_\varepsilon(S_h\rho_n(1)) - m_\varepsilon(S_hu_\tau^k))\nabla\varphi\}]y\, dx.
   \end{align*}
   Since $\nabla y\cdot\boldsymbol{n} = 0$ and $\nabla\varphi\cdot\boldsymbol{n} = 0$ on $\partial\Omega\times(0,1)$,
   we infer from integration by parts that
   \begin{align*}
      \frac{1}{2}\dhh\|y(h)\|_{L^2(\Omega)}^2 
      &= -\delta\|\nabla y\|_{L^2(\Omega)}^2 
       - \int_{\Omega} (m_\varepsilon(S_h\rho_n(1)) - m_\varepsilon(S_hu_\tau^k))\nabla\varphi\cdot\nabla y\, dx\\
      &\leq -\delta\|\nabla y\|_{L^2(\Omega)}^2
       + \|\nabla\varphi\|_{L^\infty(\Omega)}\|\nabla y\|_{L^2(\Omega)}\|m_\varepsilon(S_h\rho_n(1)) - m_\varepsilon(S_hu_\tau^k)\|_{L^2(\Omega)}.
   \end{align*}
   Using the Lipschitz continuity of $m_\varepsilon$ and Young's inequality,
   we obtain
   \begin{align*}
      \frac{1}{2}\dhh\|y(h)\|_{L^2(\Omega)}^2 
      &\leq -\delta\|\nabla y\|_{L^2(\Omega)}^2 
       + \|\nabla\varphi\|_{L^\infty(\Omega)}\|\nabla y\|_{L^2(\Omega)}\frac{\alpha}{\varepsilon^{1-\alpha}}\|S_h\rho_n(1) - S_hu_\tau^k\|_{L^2(\Omega)}\\
      &\leq C\|S_h\rho_n(1) - S_hu_\tau^k\|_{L^2(\Omega)}^2
       = C\|y(h)\|_{L^2(\Omega)}^2,
   \end{align*}
   where $C = C(\alpha,\varepsilon,\delta,\varphi)$ is a constant.
   By Gronwall's lemma, it follows
   \begin{align*}
      \|y(h)\|_{L^2(\Omega)}^2
      \leq e^{2C}\|y(0)\|_{L^2(\Omega)}^2
      = e^{2C}\|\rho_n(1) - u_\tau^k\|_{L^2(\Omega)}^2
      \quad \mathrm{for}\ h\in(0,1).
   \end{align*}
   Since $\rho_n(1) \to u_\tau^k$ in $L^2(\Omega)$ as $n\to\infty$ (see Lemma \ref{lem2.10}),
   we conclude that $S_h\rho_n(1)$ converges to $S_hu_\tau^k$ in $L^2(\Omega)$ as $n\to\infty$ for $h\in(0,1)$.
   This convergence also implies that $S_h\rho_n(1)$ converges to $S_hu_\tau^k$ weakly* in $\mathcal{M}_{loc}^+(\Rd)$
   as $n\to\infty$.
   Hence, combining this with the lower semicontinuity of $\boldsymbol{U}_\varepsilon$ (Lemma \ref{lem2.11}),
   we see
   \begin{align*}
      \boldsymbol{V}_\delta(S_hu_\tau^k) \leq \liminf_{n\to\infty}\boldsymbol{V}_\delta(S_h\rho_n(1)).
   \end{align*}
   Moreover by Lemma \ref{lem2.10}, 
   we also have $\rho_n(0) \to u_\tau^{k-1}$ in $L^1\cap L^{2-\alpha}(\Omega)$ as $n\to\infty$,
   which thus yields
    \begin{align*}
      &\lim_{n\to\infty}\boldsymbol{V}_\delta(\rho_n(0)) 
      = \boldsymbol{V}_\delta(u_\tau^{k-1}).
    \end{align*}
   Therefore by Lemma \ref{lem2.7} and \eqref{wa2}, 
    letting $n \to \infty$ in \eqref{eq8}, we obtain
    \begin{flalign*}
      &\frac{-\lambda_\delta h}{e^{-2\lambda_\delta h} - 1}W_{m_\varepsilon}(u_\tau^{k-1}, S_hu_\tau^k)^2\\
      &\leq \frac{1}{2}W_{m_\varepsilon}(u_\tau^{k-1}, u_\tau^k)^2 
       + h(\boldsymbol{V}_\delta(u_\tau^{k-1}) + L)
       - \frac{1 - e^{2\lambda_\delta h}}{-2\lambda_\delta}(\boldsymbol{V}_\delta(S_hu_\tau^k) + L),
    \end{flalign*}
    then
    \begin{flalign*} 
      &\frac{-\lambda_\delta h}{e^{-2\lambda_\delta h} - 1}\frac{W_{m_\varepsilon}(S_hu_\tau^k, u_\tau^{k-1})^2 - W_{m_\varepsilon}(u_\tau^k, u_\tau^{k-1})^2}{h}
       + \frac{1}{h}\left(\frac{-\lambda_\delta h}{e^{-2\lambda_\delta h} - 1} - \frac{1}{2}\right)W_{m_\varepsilon}(u_\tau^k, u_\tau^{k-1})^2\\
      &\leq \boldsymbol{V}_\delta(u_\tau^{k-1}) + L
       - \frac{1 - e^{2\lambda_\delta h}}{-2\lambda_\delta h}(\boldsymbol{V}_\delta(S_hu_\tau^k) + L).
    \end{flalign*}
    Since
    \begin{align*}
      &\lim_{h\downarrow0}\frac{-\lambda_\delta h}{e^{-2\lambda_\delta h} - 1} = \frac{1}{2},
      \quad \lim_{h\downarrow0}\frac{1 - e^{2\lambda_\delta h}}{-2\lambda_\delta h} = 1,
      \quad \lim_{h\downarrow0}\frac{1}{h}\left(\frac{-\lambda_\delta h}{e^{-2\lambda_\delta h} - 1} - \frac{1}{2}\right) = \frac{\lambda_\delta}{2},
    \end{align*}
    and 
    $$\boldsymbol{V}_\delta(u_\tau^k) \leq \liminf_{h\downarrow0}\boldsymbol{V}_\delta(S_hu_\tau^k)$$
    due to $S_hu_\tau^k \to u_\tau^k$ in $L^1(\Omega)$ as $h \to 0$ (Proposition \ref{prop4.1}),
    we conclude that 
    \begin{equation*}
      \frac{1}{2}\limsup_{h\downarrow0}\frac{W_{m_\varepsilon}(S_hu_\tau^k, u_\tau^{k-1})^2 - W_{m_\varepsilon}(u_\tau^k, u_\tau^{k-1})^2}{h}
      + \frac{\lambda_\delta}{2}W_{m_\varepsilon}(u_\tau^k, u_\tau^{k-1})^2
      \leq \boldsymbol{V}_\delta(u_\tau^{k-1}) - \boldsymbol{V}_\delta(u_\tau^k).
    \end{equation*}
    The proof is completed.
\end{proof}

\subsection{A discrete type of weak formulations}

\quad First, we obtain the Euler--Lagrange equation of 
the second equation of the Keller--Segel system \eqref{peks}.
Moreover we see that $v_\tau^k$ satisfies the Neumann boundary condition.

\begin{lem}\label{lem4.10}
   Let $p\geq1+\alpha-2/d$ and assume that $\chi>0$ is small enough if $p=1+\alpha-2/d$. 
   Let $v_\tau^{k-1} \in H^1(\Omega)$ and $(u_\tau^k,v_\tau^k) \in X$ be a minimizer of \eqref{mms}.
   Then it holds 
   \begin{align*}
      \int_{\Omega} \frac{v_\tau^k - v_\tau^{k-1}}{\tau}\zeta\, dx + \int_{\Omega} (\nabla v_\tau^k\cdot\nabla\zeta + v_\tau^k\zeta - u_\tau^k\zeta)\, dx = 0
      \quad \mathrm{for\ all}\ \zeta \in H^1(\Omega).
   \end{align*}
   In additon, if $\Delta v_\tau^k \in L^2(\Omega)$ then it holds that 
   $\nabla v_\tau^k\cdot\boldsymbol{n} = 0$ on $\partial\Omega$ in the sense of distributions.
\end{lem}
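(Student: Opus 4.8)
The plan is to perform a first variation of $F_\tau$ in the $v$-variable only. Since the $v$-component of $X$ is the full linear space $H^1(\Omega)$ and the weighted Wasserstein term $W_{m_\varepsilon}(u_\tau^k,u_\tau^{k-1})^2$ in \eqref{Ftau} depends only on $u$, perturbing $v_\tau^k$ by an arbitrary $\zeta \in H^1(\Omega)$ keeps the competitor inside $X$ without altering the distance term. Concretely, for $\zeta \in H^1(\Omega)$ and $s \in \mathbb{R}$ the pair $(u_\tau^k, v_\tau^k + s\zeta)$ lies in $X$, so taking $(\tilde u,\tilde v) = (u_\tau^k, v_\tau^k + s\zeta)$ in \eqref{eq37} shows that $s = 0$ minimizes $g(s) \coloneq F_\tau(u_\tau^k, v_\tau^k + s\zeta)$ over all $s \in \mathbb{R}$.

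First I would observe that $g$ is a quadratic polynomial in $s$, hence differentiable. The Wasserstein term is constant in $s$; the term $\frac{1}{2\tau}\|v_\tau^k + s\zeta - v_\tau^{k-1}\|_{L^2(\Omega)}^2$ is quadratic; and in $E(u_\tau^k, v_\tau^k + s\zeta)$ the first summand is constant, the cross term $-\int_\Omega u_\tau^k(v_\tau^k + s\zeta)\, dx$ is affine (its finiteness following as in Lemma \ref{lem3.4}, since $u_\tau^k \in L^{p+1-\alpha}(\Omega)$ and $\zeta \in H^1(\Omega)$), while the Dirichlet and $L^2$ parts in $v$ are quadratic. Imposing $g'(0) = 0$ then yields
\begin{equation*}
   \frac{1}{\tau}\int_\Omega (v_\tau^k - v_\tau^{k-1})\zeta\, dx - \int_\Omega u_\tau^k\zeta\, dx + \int_\Omega \nabla v_\tau^k\cdot\nabla\zeta\, dx + \int_\Omega v_\tau^k\zeta\, dx = 0,
\end{equation*}
which is exactly the claimed weak formulation after rearranging. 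Since $\zeta$ and $-\zeta$ are both admissible, this is a genuine equality and not merely an inequality.

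For the boundary statement, assume $\Delta v_\tau^k \in L^2(\Omega)$. Then the normal trace $\nabla v_\tau^k\cdot\boldsymbol{n}$ is well-defined in $H^{-1/2}(\partial\Omega)$, and Green's formula gives $\int_\Omega \nabla v_\tau^k\cdot\nabla\zeta\, dx = -\int_\Omega (\Delta v_\tau^k)\zeta\, dx + \langle \nabla v_\tau^k\cdot\boldsymbol{n}, \zeta\rangle_{\partial\Omega}$ for every $\zeta \in H^1(\Omega)$. Substituting this into the weak formulation and using that the bulk identity
\begin{equation*}
   \int_\Omega \left(\frac{v_\tau^k - v_\tau^{k-1}}{\tau} - \Delta v_\tau^k + v_\tau^k - u_\tau^k\right)\zeta\, dx = 0
\end{equation*}
already holds for all $\zeta \in C_c^\infty(\Omega)$ (forcing the integrand to vanish a.e.\ in $\Omega$), I would conclude $\langle \nabla v_\tau^k\cdot\boldsymbol{n}, \zeta\rangle_{\partial\Omega} = 0$ for all $\zeta \in H^1(\Omega)$; since the traces of such $\zeta$ exhaust $H^{1/2}(\partial\Omega)$, this forces $\nabla v_\tau^k\cdot\boldsymbol{n} = 0$ on $\partial\Omega$.

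The computation is routine; the only points requiring care are the integrability of $u_\tau^k\zeta$, handled exactly as in Lemma \ref{lem3.4}, and the rigorous meaning of the normal trace in the boundary step, which relies on the hypothesis $\Delta v_\tau^k \in L^2(\Omega)$ to justify Green's formula. I do not anticipate a serious obstacle, since the $v$-dependence of $F_\tau$ is smooth and convex.
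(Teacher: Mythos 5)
Your proposal is correct and follows essentially the same route as the paper: a first variation of $F_\tau$ in the $v$-component only (the paper perturbs by $a\zeta$ with $a>0$ and then replaces $\zeta$ by $-\zeta$, which is just your statement $g'(0)=0$ in slightly different clothing), followed by localizing with $C_c^\infty(\Omega)$ test functions to get the bulk identity and then reading off the vanishing of the boundary term for general $\zeta \in H^1(\Omega)$. Your treatment of the normal trace in $H^{-1/2}(\partial\Omega)$ is in fact slightly more careful than the paper's, which writes the boundary integral $\int_{\partial\Omega} \nabla v_\tau^k\cdot\boldsymbol{n}\,\zeta\, dS$ directly.
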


\begin{proof}
   Let $\zeta \in H^1(\Omega)$ and $a>0$. Note that $v_\tau^k + a\zeta \in H^1(\Omega)$.
   By \eqref{eq37} with $(\tilde{u},\tilde{v}) = (u_\tau^k,v_\tau^k + a\zeta)$, it follows
   \begin{align*}
      \frac{1}{2\tau}\|v_\tau^k - v_\tau^{k-1}\|_{L^2(\Omega)}^2 + E(u_\tau^k,v_\tau^k)
      \leq \frac{1}{2\tau}\|v_\tau^k + a\zeta - v_\tau^{k-1}\|_{L^2(\Omega)}^2 + E(u_\tau^k,v_\tau^k + a\zeta),
   \end{align*}
   then
   \begin{align*}
      0&\leq \frac{1}{2\tau}\int_{\Omega} (|v_\tau^k + a\zeta - v_\tau^{k-1}|^2 - |v_\tau^k - v_\tau^{k-1}|^2)\, dx\\
       &\quad + \frac{1}{2}\int_{\Omega} (|\nabla v_\tau^k + a\nabla\zeta|^2 - |\nabla v_\tau^k|^2)\, dx
         + \frac{1}{2}\int_{\Omega} (|v_\tau^k + a\zeta|^2 - |v_\tau^k|^2)\, dx
         - a\int_{\Omega} u_\tau^k\zeta\, dx.
   \end{align*}
   Dividing by $a>0$ and letting $a\to0$, by simple calculations, we have
   \begin{align*}
      0\leq \int_{\Omega} \frac{v_\tau^k - v_\tau^{k-1}}{\tau}\zeta\, dx
        + \int_{\Omega} \nabla v_\tau^k\cdot\nabla\zeta\, dx
        + \int_{\Omega} v_\tau^k\zeta\, dx
        - \int_{\Omega} u_\tau^k\zeta\, dx.
   \end{align*}
   Replacing $\zeta$ by $-\zeta$, we obtain the opposite inequality.
   
   Assume that $\Delta v_\tau^k \in L^2(\Omega)$.
   Letting $\psi \in C_c^\infty(\Omega)$ be arbitary, we have 
   \begin{align*}
      \int_{\Omega} \left(\frac{v_\tau^k - v_\tau^{k-1}}{\tau} -\Delta v_\tau^k + v_\tau^k - u_\tau^k\right)\psi\, dx = 0.
   \end{align*}
   Hence it follows
   \begin{align*}
      \frac{v_\tau^k - v_\tau^{k-1}}{\tau} -\Delta v_\tau^k + v_\tau^k - u_\tau^k = 0\quad \mathrm{a.e.\ in}\ \Omega.
   \end{align*}
   Then, for all $\zeta \in H^1(\Omega)$, we conclude that
   \begin{align*}
      0&= \int_{\Omega} \left(\frac{v_\tau^k - v_\tau^{k-1}}{\tau} -\Delta v_\tau^k + v_\tau^k - u_\tau^k\right)\zeta\, dx
         + \int_{\partial\Omega} \nabla v_\tau^k\cdot\boldsymbol{n}\zeta\, dS\\
       &= \int_{\partial\Omega} \nabla v_\tau^k\cdot\boldsymbol{n}\zeta\, dS,
   \end{align*}
   then $\nabla v_\tau^k\cdot\boldsymbol{n} = 0$ on $\partial\Omega$ in the sense of distributions.
\end{proof}

\begin{cor}\label{cor4.12}
   Let $(u_\tau^k,v_\tau^k) \in X$ be a minimizer of \eqref{mms} with $\Delta v_\tau^k \in L^2(\Omega)$ 
   and $u_\tau^k \in L^2(\Omega)$.
   Then $v_\tau^k \in H^2(\Omega)$ and there exists a constant $C>0$ such that 
   \begin{align*}
      \|v_\tau^k\|_{H^2(\Omega)}^2 \leq C(\|\Delta v_\tau^k\|_{L^2(\Omega)}^2 + \|v_\tau^k\|_{H^1(\Omega)}^2).
   \end{align*}
\end{cor}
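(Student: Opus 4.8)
The plan is to reduce the statement to the classical $L^2$-regularity theory for the Neumann Laplacian on the smooth bounded domain $\Omega$. First I would invoke Lemma \ref{lem4.10}: since $\Delta v_\tau^k \in L^2(\Omega)$ is assumed, that lemma supplies both the pointwise identity
$$\frac{v_\tau^k - v_\tau^{k-1}}{\tau} - \Delta v_\tau^k + v_\tau^k - u_\tau^k = 0 \quad \mathrm{a.e.\ in}\ \Omega$$
and the boundary condition $\nabla v_\tau^k \cdot \boldsymbol{n} = 0$ on $\partial\Omega$. Hence $v_\tau^k$ is a (weak) solution of the inhomogeneous Neumann problem $-\Delta w + w = f$ in $\Omega$ with $\nabla w \cdot \boldsymbol{n} = 0$ on $\partial\Omega$, where
$$f \coloneq -\Delta v_\tau^k + v_\tau^k = u_\tau^k - \frac{v_\tau^k - v_\tau^{k-1}}{\tau}.$$

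Next I would verify that $f \in L^2(\Omega)$: the term $\Delta v_\tau^k$ lies in $L^2(\Omega)$ by hypothesis, and $v_\tau^k \in H^1(\Omega) \hookrightarrow L^2(\Omega)$, so $f \in L^2(\Omega)$ with $\|f\|_{L^2(\Omega)} \leq \|\Delta v_\tau^k\|_{L^2(\Omega)} + \|v_\tau^k\|_{L^2(\Omega)}$. Since $\partial\Omega$ is smooth, the standard elliptic regularity theorem for the Neumann problem then yields $v_\tau^k \in H^2(\Omega)$ together with an a priori estimate $\|v_\tau^k\|_{H^2(\Omega)} \leq C\|f\|_{L^2(\Omega)}$ for some constant $C = C(\Omega) > 0$. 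Combining this with the bound on $\|f\|_{L^2(\Omega)}$ and squaring gives
$$\|v_\tau^k\|_{H^2(\Omega)}^2 \leq 2C^2\left(\|\Delta v_\tau^k\|_{L^2(\Omega)}^2 + \|v_\tau^k\|_{L^2(\Omega)}^2\right) \leq 2C^2\left(\|\Delta v_\tau^k\|_{L^2(\Omega)}^2 + \|v_\tau^k\|_{H^1(\Omega)}^2\right),$$
which is the claimed inequality after renaming the constant.

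There is no genuine obstacle here: the corollary is essentially a direct application of classical $H^2$-regularity for second-order elliptic equations with Neumann data on a smooth domain, the only input beyond the stated hypotheses being the boundary condition already extracted in Lemma \ref{lem4.10}. The one point deserving care is that the regularity theorem relies on the \emph{smoothness} of $\partial\Omega$ (assumed throughout) rather than on convexity; convexity, which was needed earlier via Lemma \ref{lemA.2}, plays no role in this particular statement. If one preferred a self-contained argument, one could instead control the tangential second derivatives by difference quotients and then recover the normal second derivative directly from the equation $\partial_{nn}^2 v_\tau^k = -f + v_\tau^k - \sum_{i<d}\partial_{ii}^2 v_\tau^k$, but citing the standard theorem is the cleaner route.
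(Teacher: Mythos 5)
Your proposal is correct and follows essentially the same route as the paper, whose proof is simply the one-line citation of Lemma \ref{lem4.8}, Lemma \ref{lem4.10} and the elliptic regularity theorem; you have merely written out the details (identifying the $L^2$ right-hand side $f = -\Delta v_\tau^k + v_\tau^k$ and applying $H^2$-regularity for the Neumann problem on the smooth domain). Your remark that smoothness of $\partial\Omega$, not convexity, is what matters here is also accurate.
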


\begin{proof}
   By Lemma \ref{lem4.8}, Lemma \ref{lem4.10} and the elliptic regularity theorem, 
   we can complete the proof immediately.
\end{proof}

\quad Next, we obtain the inequality like the Euler--Lagrange equation of 
the first equation of the Keller--Segel system \eqref{peks}.

\begin{lem}\label{lem4.9}
   Let $1+\alpha -2/d \leq p \leq 1+\alpha$ and assume that $\chi>0$ is small enough if $p=1+\alpha-2/d$. 
   Let $(u_\tau^{k-1},v_\tau^{k-1}) \in X$ and $(u_\tau^k,v_\tau^k) \in X$ be a minimizer of \eqref{mms}.
   Then 
   \begin{align*}
      \frac{\boldsymbol{V}_\delta(u_\tau^k) - \boldsymbol{V}_\delta(u_\tau^{k-1})}{\chi}
      &\leq \tau\left[\frac{p}{\chi(p-\alpha)}\int_{\Omega} (u_\tau^k)^{p-\alpha} \nabla\cdot(m_\varepsilon(u_\tau^k)\nabla\varphi)\, dx
       + \int_{\Omega} m_\varepsilon(u_\tau^k)\nabla v_\tau^k\cdot\nabla\varphi\, dx\right]\\
      &\quad -\tau\lambda_\delta(E(u_\tau^{k-1},v_\tau^{k-1}) - E(u_\tau^k,v_\tau^k))
       - \tau\delta\int_{\Omega} (\Delta v_\tau^k) u_\tau^k\, dx.
   \end{align*}
\end{lem}

\begin{proof}
   In this proof, we write $\|\cdot\|_{L^q(\Omega)} = \|\cdot\|_q$ for $q\in[1,\infty]$.
   Let $S_tu_\tau^k$ be a nonnegative solution to \eqref{eq72} with $w_0=u_\tau^k$.
   Note that $S_tu_\tau^k \in C((0,T];W^{2,p+1-\alpha}(\Omega))\cap C^1((0,T];L^{p+1-\alpha}(\Omega))$
   and $S_tu_\tau^k \to u_\tau^k$ in $L^1\cap L^2\cap W^{1,p+1-\alpha}(\Omega)$ as $t \to 0$.
   Then for $t>0$, using \eqref{eq72} and integration by parts, we have 
   \begin{align*}
      \frac{d}{dt}E(S_tu_\tau^k,v_\tau^k) 
      &= \frac{p}{\chi(p-\alpha)}\int_{\Omega} (S_tu_\tau^k)^{p-\alpha}(\dt S_tu_\tau^k)\, dx
       - \int_{\Omega} v_\tau^k (\dt S_tu_\tau^k)\, dx\\
      &= \frac{p}{\chi(p-\alpha)}\int_{\Omega} (S_tu_\tau^k)^{p-\alpha}[\nabla\cdot(m_\varepsilon(S_tu_\tau^k)\nabla\varphi) + \delta\Delta S_tu_\tau^k]\, dx\\
      &\quad - \int_{\Omega} v_\tau^k [\nabla\cdot(m_\varepsilon(S_tu_\tau^k)\nabla\varphi) + \delta\Delta S_tu_\tau^k]\, dx\\
      &= \frac{p}{\chi(p-\alpha)}\int_{\Omega} (S_tu_\tau^k)^{p-\alpha}\nabla\cdot (m_\varepsilon(S_tu_\tau^k)\nabla\varphi)\, dx
       + \frac{p\delta}{\chi(p-\alpha)}\int_{\Omega} (S_tu_\tau^k)^{p-\alpha}\Delta S_tu_\tau^k\, dx\\
      &\quad + \int_{\Omega} m_\varepsilon(S_tu_\tau^k)\nabla v_\tau^k\cdot\nabla\varphi\, dx
       - \delta\int_{\Omega} (\Delta v_\tau^k) S_tu_\tau^k\, dx.
   \end{align*}
   By integrating over $(0,t)$, it follows
   \begin{align}\label{eq56}
      &E(S_tu_\tau^k,v_\tau^k) - E(u_\tau^k,v_\tau^k)\notag\\
      &\leq \frac{p}{\chi(p-\alpha)}\int_0^t\int_{\Omega} (S_tu_\tau^k)^{p-\alpha}\nabla\cdot (m_\varepsilon(S_tu_\tau^k)\nabla\varphi)\, dx\,dt
       + \frac{p\delta}{\chi(p-\alpha)}\int_0^t\int_{\Omega} (S_tu_\tau^k)^{p-\alpha}\Delta S_tu_\tau^k\, dx\, dt\notag\\
      &\quad +\int_0^t\int_{\Omega} m_\varepsilon(S_tu_\tau^k)\nabla v_\tau^k\cdot \nabla\varphi\, dx\,dt
       - \delta\int_0^t\int_{\Omega} (\Delta v_\tau^k) S_tu_\tau^k\, dx\, dt.
   \end{align}
   Since $S_tu_\tau^k \to u_\tau^k$ in $L^2(\Omega)$ as $t\to0$ 
   and $v_\tau^k \in H^2(\Omega)$,
   it immediately follows that 
   \begin{align*}
      \int_{\Omega} (\Delta v_\tau^k) S_tu_\tau^k\, dx 
       \to \int_{\Omega} (\Delta v_\tau^k) u_\tau^k\, dx
      \quad \mathrm{as}\ t\to0.
   \end{align*}
   We will show 
   \begin{align}
      &\int_{\Omega} (S_tu_\tau^k)^{p-\alpha}\nabla\cdot (m_\varepsilon(S_tu_\tau^k)\nabla\varphi)\, dx 
       \to \int_{\Omega} (u_\tau^k)^{p-\alpha}\nabla\cdot (m_\varepsilon(u_\tau^k)\nabla\varphi)\, dx\quad
       \mathrm{as}\ t \to 0,\label{eq57}\\
      &\int_{\Omega} m_\varepsilon(S_tu_\tau^k)\nabla v_\tau^k\cdot\nabla\varphi\, dx 
       \to \int_{\Omega} m_\varepsilon(u_\tau^k)\nabla v_\tau^k\cdot\nabla\varphi\, dx\quad
       \mathrm{as}\ t \to 0,\label{eq58}\\
      &\int_{\Omega} (S_tu_\tau^k)^{p-\alpha}\Delta S_tu_\tau^k\, dx \leq 0 \quad
       \mathrm{for}\ t>0.\label{eq91}
   \end{align}
   First, note that
   \begin{align*}
      &\bullet \nabla\cdot(m_\varepsilon(S_tu_\tau^k)\nabla\varphi)
      = \frac{\alpha\nabla S_tu_\tau^k\cdot\nabla\varphi}{(S_tu_\tau^k+\varepsilon)^{1-\alpha}}
      + (S_tu_\tau^k+\varepsilon)^\alpha\Delta\varphi,\\
      &\bullet \|(S_tu_\tau^k + \varepsilon)^\alpha\Delta\varphi\|_{L^{p+1-\alpha}(\Omega)}
      \leq \tilde{C}(\varphi,\alpha,\varepsilon)(\|S_tu_\tau^k\|_{L^{p+1-\alpha}}^\alpha + 1).
   \end{align*}
   Then, 
   \begin{align*}
      &\left|\int_{\Omega} (S_tu_\tau^k)^{p-\alpha}\nabla\cdot(m_\varepsilon(S_tu_\tau^k)\nabla\varphi)\, dx
       - \int_{\Omega} (u_\tau^k)^{p-\alpha}\nabla\cdot(m_\varepsilon(u_\tau^k)\nabla\varphi)\, dx\right|\\
      &\leq \int_{\Omega} |(S_tu_\tau^k)^{p-\alpha}-(u_\tau^k)^{p-\alpha}|
       \left|\frac{\alpha\nabla S_tu_\tau^k\cdot\nabla\varphi}{(S_tu_\tau^k+\varepsilon)^{1-\alpha}} 
       + (S_tu_\tau^k+\varepsilon)^\alpha\Delta\varphi\right|\, dx\\
      &\quad + \int_{\Omega} (u_\tau^k)^{p-\alpha}
       \left|\frac{\alpha\nabla S_tu_\tau^k\cdot\nabla\varphi}{(S_tu_\tau^k+\varepsilon)^{1-\alpha}} - \frac{\alpha\nabla u_\tau^k\cdot\nabla\varphi}{(u_\tau^k+\varepsilon)^{1-\alpha}}\right|\, dx\\
      &\quad + \int_{\Omega} (u_\tau^k)^{p-\alpha}
       |(S_tu_\tau^k+\varepsilon)^{\alpha}-(u_\tau^k+\varepsilon)^{\alpha}||\Delta\varphi|\, dx\\
      &\eqcolon \mathrm{I}_1 + \mathrm{I}_2 + \mathrm{I}_3.
   \end{align*}
   Since $|(S_tu_\tau^k)^{p-\alpha}-(u_\tau^k)^{p-\alpha}| \leq |S_tu_\tau^k-u_\tau^k|^{p-\alpha}$,
   by H\"older's inequality, we have
   \begin{align*}
      \mathrm{I}_1
      \leq \|S_tu_\tau^k - u_\tau^k\|_{L^{p+1-\alpha}(\Omega)}^{p-\alpha}
       C(\varphi,\alpha, \varepsilon)(\|\nabla S_tu_\tau^k\|_{L^{p+1-\alpha}(\Omega)} + \|S_tu_\tau^k\|_{L^{p+1-\alpha}(\Omega)}^\alpha + 1).
   \end{align*}
   Since $S_tu_\tau^k \to u_\tau^k$ in $L^{p+1-\alpha}(\Omega)$ as $t \to 0$
   and $\sup_{t \in [0,1]}\|S_tu_\tau^k\|_{W^{1,p+1-\alpha}(\Omega)} < \infty$,
   we obtain $\mathrm{I}_1 \to 0$ as $t \to 0$.
   Further, by H\"older's inequality and the mean value theorem, we have
   \begin{align*}
      \mathrm{I}_2
      \leq \|u_\tau^k\|_{L^{p+1-\alpha}(\Omega)}^{p-\alpha}C(\varphi,\alpha,\varepsilon)
       \|S_tu_\tau^k - u_\tau^k\|_{W^{1,p+1-\alpha}(\Omega)}.
   \end{align*}
   Since $S_tu_\tau^k \to u_\tau^k$ in $W^{1,p+1-\alpha}(\Omega)$ as $t \to 0$,
   we obtain $\mathrm{I}_2 \to 0$ as $t \to 0$.
   Similarly we have
   \begin{align*}
      \mathrm{I}_3
      \leq \|u_\tau^k\|_{L^{p+1-\alpha}(\Omega)}^{p-\alpha}C(\varphi,\alpha,\varepsilon)
       \|S_tu_\tau^k - u_\tau^k\|_{L^{p+1-\alpha}(\Omega)}^\alpha
      \to 0 \quad \mathrm{as}\ t \to 0.
   \end{align*}
   Thus \eqref{eq57} holds.
   Secondary, since $S_tu_\tau^k \to u_\tau^k$ in $L^{p+1-\alpha}(\Omega)$ as $t\to0$,
   we infer from the Lipschitz continuity of $m_\varepsilon$ and H\"older's ineqality that
   \begin{align*}
      &\left|\int_{\Omega} m_\varepsilon(S_tu_\tau^k)\nabla v_\tau^k\cdot\nabla\varphi\, dx 
       - \int_{\Omega} m_\varepsilon(u_\tau^k)\nabla v_\tau^k\cdot\nabla\varphi\, dx\right|\\
      &\leq \int_{\Omega} |m_\varepsilon(S_tu_\tau^k) - m_\varepsilon(u_\tau^k)||\nabla v_\tau^k||\nabla\varphi|\, dx\\
      &\leq \frac{\alpha\|\nabla\varphi\|_\infty}{\varepsilon^{1-\alpha}}\|S_tu_\tau^k - u_\tau^k\|_{p+1-\alpha}\|\nabla v_\tau^k\|_{\frac{p+1-\alpha}{p-\alpha}}
      \to 0\quad \mathrm{as}\ t \to 0,
   \end{align*}
   which yields \eqref{eq58}.
   Finally,
   set $y_n \coloneq S_tu_\tau^k + 1/n$, then $y_n$ still satisfies 
   $\nabla y_n\cdot\boldsymbol{n} = 0$ on $\partial\Omega$.
   By integration by parts, it follows
   \begin{align*}
      \int_{\Omega} (\Delta S_tu_\tau^k)y_n^{p-\alpha}\, dx
      &= - \int_{\Omega} \nabla S_tu_\tau^k\cdot\nabla (y_n)^{p-\alpha}\, dx\\
      &= -\int_{\Omega} (p-\alpha)\frac{|\nabla S_tu_\tau^k|^2}{(y_n)^{1+\alpha-p}}\, dx
      \leq 0
      \quad \mathrm{for}\ t>0.
   \end{align*}
   Since 
   $|\Delta S_tu_\tau^k| y_n^{p-\alpha} \leq |\Delta S_tu_\tau^k| ((S_tu_\tau^k)^{p-\alpha} + 1) \in L^1(\Omega)$
   and 
   $(y_n)^{p-\alpha} \to (S_tu_\tau^k)^{p-\alpha}$ a.e. in $\Omega$ as $n\to\infty$,
   we infer from Lebesgue's dominated convergence theorem that
   \begin{align*}
      \int_{\Omega} (\Delta S_tu_\tau^k)(S_tu_\tau^k)^{p-\alpha}\, dx 
      = \lim_{n\to\infty}\int_{\Omega} (\Delta S_tu_\tau^k)y_n^{p-\alpha}\, dx \leq 0
      \quad \mathrm{for}\ t>0,
   \end{align*}
   which gives \eqref{eq91}.
   Since $(S_tu_\tau^k,v_\tau^k) \in X$, by \eqref{eq37}, we have
   \begin{equation*}
      0 \leq \frac{1}{2\tau\chi}[W_{m_\varepsilon}(S_tu_\tau^k, u_\tau^{k-1})^2 - W_{m_\varepsilon}(u_\tau^k, u_\tau^{k-1})^2]
       + E(S_tu_\tau^k,v_\tau^k) - E(u_\tau^k,v_\tau^k).
   \end{equation*}
   Dividing \eqref{eq56} by $t>0$ and letting $t\to0$,
   we infer from Lemma \ref{lem4.6}, \eqref{eq57}, \eqref{eq58} and \eqref{eq91} that
   \begin{align*}
      0 &\leq -\frac{\lambda_\delta}{2\tau\chi}W_{m_\varepsilon}(u_\tau^k, u_\tau^{k-1})^2 + \frac{1}{\tau\chi}[\boldsymbol{V}_\delta(u_\tau^{k-1}) - \boldsymbol{V}_\delta(u_\tau^k)]\\
        &\quad \frac{p}{\chi(p-\alpha)}\int_{\Omega} (u_\tau^k)^{p-\alpha}\nabla\cdot(m_\varepsilon(u_\tau^k)\nabla\varphi)\, dx
         +\int_{\Omega} m_\varepsilon(u_\tau^k)\nabla v_\tau^k\cdot\nabla\varphi\, dx
         - \delta\int_{\Omega} (\Delta v_\tau^k) u_\tau^k\, dx.
   \end{align*}
   Further using \eqref{eq37} with $(\tilde{u},\tilde{v}) = (u_\tau^{k-1},v_\tau^{k-1})$:
   $$\frac{1}{2\tau\chi}W_{m_\varepsilon}(u_\tau^k, u_\tau^{k-1})^2 
   \leq E(u_\tau^{k-1},v_\tau^{k-1}) - E(u_\tau^k,v_\tau^k),$$
   note that $-\lambda_\delta \geq 0$, we conclude that
   \begin{align*}
      \frac{\boldsymbol{V}_\delta(u_\tau^k) - \boldsymbol{V}_\delta(u_\tau^{k-1})}{\chi}
      &\leq \tau\left[\frac{p}{\chi(p-\alpha)}\int_{\Omega} (u_\tau^k)^{p-\alpha}\nabla\cdot(m_\varepsilon(u_\tau^k)\nabla\varphi)\, dx
       + \int_{\Omega} m_\varepsilon(u_\tau^k)\nabla v_\tau^k\cdot\nabla\varphi\, dx\right]\\
      &\quad -\tau\lambda_\delta(E(u_\tau^{k-1},v_\tau^{k-1}) - E(u_\tau^k,v_\tau^k))
       - \tau\delta\int_{\Omega} (\Delta v_\tau^k) u_\tau^k\, dx.
   \end{align*}
   The proof is completed.
\end{proof}


\section{Uniform estimates and convergences}

\begin{dfn}\label{dfn5.1}
   We define the following piecewise constant functions:
   \begin{align*}
      \begin{cases}
         u_\tau(t) \coloneq u_\tau^k \quad \mathrm{if}\ t \in ((k-1)\tau, k\tau]\quad \mathrm{for}\ k\in \mathbb{N},\ u_\tau(0) \coloneq u_0,\\
         v_\tau(t) \coloneq v_\tau^k \quad \mathrm{if}\ t \in ((k-1)\tau, k\tau]\quad \mathrm{for}\ k\in \mathbb{N},\ v_\tau(0) \coloneq v_0.
      \end{cases} 
   \end{align*}
   Notice that since the minimizers $(u_\tau^k,v_\tau^k)$ are nonnegative functions (Remark \ref{rem3.5}),
   the above functions are also nonnegative.
\end{dfn}

\quad First, we establish the uniform estimates including time variables.

\begin{lem}\label{lem5.2}
   Let $T>0$, $1+\alpha-2/d \leq p \leq 1+\alpha$ and assume that $\chi>0$ is small enough if $p=1+\alpha-2/d$. 
   Then there exist positive constants $C_1, C_2, C_3, C_4$ and $C_5$ depending on $\alpha,p,d,\chi,u_0$ and $v_0$
   such that the following uniform estimates hold:
   \begin{align}
      &\sup_{0\leq t \leq T}\left(\|u_\tau(t)\|_{L^{p+1-\alpha}(\Omega)}^{p+1-\alpha} + \|v_\tau(t)\|_{H^1(\Omega)}^2\right) 
       \leq C_1,\label{eq61}\\
      &\int_0^T \left(\|\nabla (u_\tau(t))^{\frac{p+1-\alpha}{2}}\|_{L^2(\Omega)}^2 + \|\Delta v_\tau(t) - v_\tau(t) + u_\tau(t)\|_{L^2(\Omega)}^2\right)\, dt 
       \leq C_2(1+T),\label{eq62}\\
      &\int_0^T \|v_\tau(t)\|_{H^2(\Omega)}^2\, dt 
       \leq C_3(1+T),\label{eq73}\\
      &\int_0^T \|u_\tau(t)\|_{L^2(\Omega)}^2\, dt
       \leq C_4(1+T),\label{eq66}\\
      &\int_0^T \|u_\tau(t)\|_{W^{1,p+1-\alpha}(\Omega)}^2\, dt
       \leq C_5(1+T).\label{eq79}
   \end{align}
\end{lem}

\begin{proof}
   To simplify, we set $T=N\tau$ for $N\in\mathbb{N}$.
   From \eqref{eq38}, summing up $i=1$ to $i=k$ for any $k \in \mathbb{N}$, we have
   \begin{align*}
      \sum_{i=1}^k E(u_\tau^i,v_\tau^i) &\leq \sum_{i=1}^k E(u_\tau^{i-1},v_\tau^{i-1}),
   \end{align*}
   so that
   \begin{align*}
      E(u_\tau^k,v_\tau^k) &\leq E(u_0,v_0),
   \end{align*}
   then
   \begin{equation*}
      \frac{p}{\chi(p-\alpha)(p+1-\alpha)}\|u_\tau^k\|_{L^{p+1-\alpha}(\Omega)}^{p+1-\alpha} 
      - \|u_\tau^k v_\tau^k\|_{L^1(\Omega)} 
      + \frac{1}{2}\|v_\tau^k\|_{H^1(\Omega)}^2
       \leq E(u_0,v_0).
   \end{equation*}
   By using the inequality \eqref{eq30} or \eqref{eq89}, it follows
   \begin{equation*}
      \|u_\tau^k\|_{L^{p+1-\alpha}(\Omega)}^{p+1-\alpha} + \|v_\tau^k\|_{H^1(\Omega)}^2
      \leq C_1\quad \mathrm{for}\ k \in \mathbb{N}
   \end{equation*}
   for some constant $C_1 = C_1(\alpha,p,d,\chi,u_0,v_0)$, which gives
   \begin{equation*}
      \sup_{0\leq t\leq T} \left(\|u_\tau(t)\|_{L^{p+1-\alpha}(\Omega)}^{p+1-\alpha} + \|v_\tau(t)\|_{H^1(\Omega)}^2\right) \leq C_1.
   \end{equation*}
   Combining the above uniform estimate with \eqref{eq74}, we have
   \begin{align*}
      &\frac{4p}{\chi(p+1-\alpha)^2}\|\nabla (u_\tau^k)^{\frac{p+1-\alpha}{2}}\|_{L^2(\Omega)}^2 + \|\Delta v_\tau^k - v_\tau^k + u_\tau^k\|_{L^2(\Omega)}^2\\
      &\leq \frac{2}{\tau\chi}(\boldsymbol{U}_\varepsilon(u_\tau^{k-1}) - \boldsymbol{U}_\varepsilon(u_\tau^k)) + \frac{\|v_\tau^{k-1}\|_{H^1(\Omega)}^2 - \|v_\tau^k\|_{H^1(\Omega)}^2}{\tau}
       + C_1 + C_0(C_1 + C_1^{\frac{1}{p-\alpha}}).
   \end{align*} 
   Hence it follows by integrating over $(0,T)$ that
   \begin{align*}
      &\int_0^T \left(\frac{4p}{\chi(p+1-\alpha)^2}\|\nabla (u_\tau(t))^{\frac{p+1-\alpha}{2}}\|_{L^2(\Omega)}^2 + \|\Delta v_\tau(t) - v_\tau(t) + u_\tau(t)\|_{L^2(\Omega)}^2\right)\, dt\\
      &= \sum_{k=1}^N \int_{(k-1)\tau}^{k\tau} \left(\frac{4p}{\chi(p+1-\alpha)^2}\|\nabla (u_\tau^k)^{\frac{p+1-\alpha}{2}}\|_{L^2(\Omega)}^2 + \|\Delta v_\tau^k - v_\tau^k + u_\tau^k\|_{L^2(\Omega)}^2\right)\, dt\\
      &\leq \frac{2}{\chi}(\boldsymbol{U}_\varepsilon(u_0) - \boldsymbol{U}_\varepsilon(u_\tau^N)) + \|v_0\|_{H^1(\Omega)}^2 - \|v_\tau^N\|_{H^1(\Omega)}^2 + \tilde{C}T\\
      &\leq \frac{2}{\chi(1-\alpha)}\|u_0\|_{L^{2-\alpha}(\Omega)}^{2-\alpha} + \|v_0\|_{H^1(\Omega)}^2 + \tilde{C}T,
   \end{align*}
   where $\tilde{C} = C_1 + C_0(C_1+C_1^{\frac{1}{p-\alpha}})$ and 
   we used Lemma \ref{lem2.11}:
   $$\boldsymbol{U}_\varepsilon(u_0) \leq \frac{1}{1-\alpha}\|u_0\|_{L^{2-\alpha}(\Omega)}^{2-\alpha}
   \ \mathrm{and}\ \boldsymbol{U}_\varepsilon \geq 0.$$
   Thus there exists a constant $C_2 = C_2(\alpha,p,d,\chi,u_0,v_0)>0$ such that
   \begin{align*}
      \int_0^T \left(\|\nabla (u_\tau(t))^{\frac{p+1-\alpha}{2}}\|_{L^2(\Omega)}^2 + \|\Delta v_\tau(t) - v_\tau(t) + u_\tau(t)\|_{L^2(\Omega)}^2\right)\, dt
      \leq C_2(1+T).
   \end{align*}
   Observe that if $p=1+\alpha$ then $p+1-\alpha = 2$, that is, 
   $\sup_{0\leq t\leq T}\|u_\tau(t)\|_{L^2(\Omega)}^2 \leq C_1$.
   By Corollary \ref{cor4.12}, \eqref{eq61}, \eqref{eq62} and \eqref{eq75},
   we can get the estimate \eqref{eq73} for some constant $C_3$.
   By \eqref{eq75}, \eqref{eq61} and \eqref{eq62}, 
   we immediately obtain \eqref{eq66} for some constant $C_4$.
   Finally,
   it follows from Lemma \ref{lem7.1} that
   \begin{align}\label{eq85}
      \nabla u_{\tau}(t) = \frac{2}{p+1-\alpha}u_{\tau}^{\frac{1+\alpha-p}{2}}(t)\nabla (u_{\tau}(t))^{\frac{p+1-\alpha}{2}}
      \quad \mathrm{a.e.\ in}\ \Omega\ \mathrm{for}\ t\in [0,T].
   \end{align}
   Then we infer from \eqref{eq61} and \eqref{eq62} that 
   \begin{align*}
      &\int_0^T\left(\int_{\Omega} |\nabla u_\tau|^{p+1-\alpha}\, dx\right)^{\frac{2}{p+1-\alpha}}\, dt\\
      &\leq \left(\frac{2}{p+1-\alpha}\right)^2\int_0^T 
       \left(\int_{\Omega} (u_\tau)^{p+1-\alpha}\, dx\right)^{\frac{1+\alpha-p}{p+1-\alpha}}
       \left(\int_{\Omega} |\nabla(u_\tau)^{\frac{p+1-\alpha}{2}}|^2\, dx\right)\, dt\\
      &\leq \left(\frac{2}{p+1-\alpha}\right)^2
       \left(\sup_{0\leq t\leq T} \|u_\tau(t)\|_{L^{p+1-\alpha}(\Omega)}^{p+1-\alpha}\, dt\right)^{\frac{1+\alpha-p}{p+1-\alpha}}
       \left(\int_0^T \|\nabla (u_\tau(t))^{\frac{p+1-\alpha}{2}}\|_{L^2(\Omega)}^2\, dt\right)\\
      &\leq \left(\frac{2}{p+1-\alpha}\right)^2 C_1^{\frac{1+\alpha-p}{p+1-\alpha}}C_2(1+T).
   \end{align*}
   Thus, combining this estimate with \eqref{eq61},
   we obtain \eqref{eq79} and complete the proof.
\end{proof}

\quad The following lemma is about estimates like the equi-continuity to use the refined Ascoli--Arzel\`a theorem (\cite[Proposition 3.3.1]{AGS}).
Note that the weighted Wasserstein distance depends on $\varepsilon$.

\begin{lem}\label{lem5.3}
   Let $T>0$, $p\geq1+\alpha-2/d$ and assume that $\chi>0$ is small enough if $p=1+\alpha-2/d$. 
   Then there exists $C_6 = C_6(\alpha,p,d,\chi,u_0,v_0) > 0$ satisfying
   for all $(t,s) \in [0,T]^2$ and $\tau \in (0,1)$ it holds
   \begin{align*}
      W_{m_\varepsilon}(u_\tau(t), u_\tau(s)) \leq C_6(\sqrt{|t-s|} + \sqrt{\tau}),\\
      \|v_\tau(t) - v_\tau(s)\|_{L^2(\Omega)} \leq C_6(\sqrt{|t-s|} + \sqrt{\tau}).
   \end{align*}
\end{lem}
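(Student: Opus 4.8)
The plan is to derive the estimate from the standard energy-dissipation inequality of the minimizing movement scheme, combined with the triangle inequality for the pseudo-distance $W_{m_\varepsilon}$ and the Cauchy--Schwarz inequality. First I would extract a summable a priori bound. Testing \eqref{eq37} with $(\tilde u,\tilde v)=(u_\tau^{k-1},v_\tau^{k-1})$ annihilates the two penalization terms on the right-hand side, so for every $k$,
\[
\frac{1}{2\tau}\left(\frac{W_{m_\varepsilon}(u_\tau^k,u_\tau^{k-1})^2}{\chi}+\|v_\tau^k-v_\tau^{k-1}\|_{L^2(\Omega)}^2\right)\le E(u_\tau^{k-1},v_\tau^{k-1})-E(u_\tau^k,v_\tau^k).
\]

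Summing this from $k=1$ to $k=N$, the right-hand side telescopes to $E(u_0,v_0)-E(u_\tau^N,v_\tau^N)$, and since $E$ is bounded below on $X$ (Lemma \ref{lem3.1}) while $E(u_0,v_0)<\infty$ because $(u_0,v_0)\in X$, I obtain a constant $C_0\coloneq E(u_0,v_0)-\inf_X E<\infty$ depending only on $\alpha,p,d,\chi,u_0,v_0$ such that
\[
\sum_{k=1}^{N}\left(\frac{W_{m_\varepsilon}(u_\tau^k,u_\tau^{k-1})^2}{\chi}+\|v_\tau^k-v_\tau^{k-1}\|_{L^2(\Omega)}^2\right)\le 2\tau C_0.
\]
In particular every consecutive distance $W_{m_\varepsilon}(u_\tau^k,u_\tau^{k-1})$ is finite, and $\sum_k W_{m_\varepsilon}(u_\tau^k,u_\tau^{k-1})^2\le 2\tau\chi C_0$, while $\sum_k\|v_\tau^k-v_\tau^{k-1}\|_{L^2(\Omega)}^2\le 2\tau C_0$.

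Next, given $s\le t$ in $[0,T]$, I would choose indices $j\le l$ with $s\in((j-1)\tau,j\tau]$ and $t\in((l-1)\tau,l\tau]$, so that $u_\tau(s)=u_\tau^j$ and $u_\tau(t)=u_\tau^l$ by Definition \ref{dfn5.1}. Since $W_{m_\varepsilon}$ satisfies the triangle inequality (Proposition \ref{prop2.5}) and all the intermediate distances are finite, Cauchy--Schwarz gives
\[
W_{m_\varepsilon}(u_\tau(t),u_\tau(s))\le\sum_{k=j+1}^{l}W_{m_\varepsilon}(u_\tau^k,u_\tau^{k-1})\le (l-j)^{1/2}\Big(\sum_{k=j+1}^{l}W_{m_\varepsilon}(u_\tau^k,u_\tau^{k-1})^2\Big)^{1/2}.
\]
Bounding the inner sum by $2\tau\chi C_0$ yields $W_{m_\varepsilon}(u_\tau(t),u_\tau(s))\le(2\chi C_0)^{1/2}\sqrt{(l-j)\tau}$. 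The interval bookkeeping gives $(l-j)\tau\le|t-s|+\tau$ (indeed $t-s>(l-j-1)\tau$), so by the subadditivity $\sqrt{a+b}\le\sqrt a+\sqrt b$ the claimed bound follows. The $L^2$ estimate for $v_\tau$ is obtained verbatim, using the ordinary triangle inequality for $\|\cdot\|_{L^2(\Omega)}$ and the second sum above (without the factor $\chi$); taking $C_4\coloneq(2C_0)^{1/2}\max\{\sqrt\chi,1\}$ covers both inequalities.

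The only genuinely delicate point is the chaining of the triangle inequality for $W_{m_\varepsilon}$: because it is merely a pseudo-distance that \emph{a priori} may take the value $+\infty$, one must first secure finiteness of each consecutive distance before summing, which is exactly what the summed dissipation bound provides. Everything else---the telescoping of the energy, the Cauchy--Schwarz step, and the conversion of $(l-j)\tau$ into $|t-s|+\tau$---is routine, and I expect no further obstruction.
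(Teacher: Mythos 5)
Your proposal is correct and follows essentially the same route as the paper's proof: testing \eqref{eq37} with $(\tilde u,\tilde v)=(u_\tau^{k-1},v_\tau^{k-1})$, telescoping the energy, chaining the (pseudo-)triangle inequality with Cauchy--Schwarz, and converting the number of intermediate steps into $\sqrt{|t-s|}+\sqrt{\tau}$. The only difference is cosmetic (interval bookkeeping versus the paper's ceiling-function indices), plus your explicit remark on finiteness of the consecutive distances, which the paper leaves implicit.
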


\begin{proof}
   We only prove the first inequality because the other inequality can be shown by the same argument.
   Let $0\leq s < t \leq T$ and define
   $$N \coloneq \bigg\lceil\frac{t}{\tau}\bigg\rceil,\ 
   P \coloneq \bigg\lceil\frac{s}{\tau}\bigg\rceil,$$
   where $\lceil x\rceil$ denotes the superior integer part of the real number $x$.
   From \eqref{eq37} with $\tilde{u}=u_\tau^{k-1}$ and $\tilde{v} = v_\tau^{k-1}$, we have
   $$W_{m_\varepsilon}(u_\tau^k,u_\tau^{k-1})^2 + \chi\|v_\tau^k - v_\tau^{k-1}\|_{L^2(\Omega)}^2
    \leq 2\tau\chi(E(u_\tau^{k-1},v_\tau^{k-1}) - E(u_\tau^k,v_\tau^k)),$$
   then
   $$\sum_{k=1}^N W_{m_\varepsilon}(u_\tau^k, u_\tau^{k-1})^2 
   \leq 2\tau\chi(E(u_0,v_0) - E(u_\tau^N,v_\tau^N)).$$
   Because the functional $E$ is bounded below in $X$ (see Lemma \ref{lem3.1}), we see
   $$\sum_{k=1}^N W_{m_\varepsilon}(u_\tau^k, u_\tau^{k-1})^2 
   \leq 2\tau\chi\left(E(u_0,v_0) - \inf_{(u,v)\in X}E(u,v)\right).$$
   Since $t \in ((N-1)\tau,N\tau]$ and $s \in ((P-1)\tau,P\tau]$ by the definition of $N$ and $P$, 
   it follows
   \begin{align*}
      W_{m_\varepsilon}(u_\tau(t), u_\tau(s)) 
      &= W_{m_\varepsilon}(u_\tau^N, u_\tau^P)
       \leq \sum_{k=P+1}^N W_{m_\varepsilon}(u_\tau^k, u_\tau^{k-1})\\
      &\leq \sqrt{N-P}\sqrt{\sum_{k=P+1}^N W_{m_\varepsilon}(u_\tau^k, u_\tau^{k-1})^2}\\
      &\leq \sqrt{N-P}\sqrt{2\tau\chi\left(E(u_0,v_0) - \inf_{(u,v)\in X}E(u,v)\right)}\\
      &\leq \sqrt{2\chi}\sqrt{t-s+\tau}\left(E(u_0,v_0) - \inf_{(u,v)\in X}E(u,v)\right)^{\frac{1}{2}}\\
      &\leq C_6(\sqrt{|t-s|} + \sqrt{\tau}),
   \end{align*}
   where $C_6 = C_6(\alpha,p,d,\chi,u_0,v_0)$ is a constant, and in the second ineqality, we used 
   $$(x_1 + \cdots + x_n)^2 \leq n(x_1^2 + \cdots + x_n^2)
   \quad \mathrm{for}\ x_i\geq0,\ i=1,\cdots, n.$$
   The proof is completed.
\end{proof}

\quad From the above lemmas, we obtain
the convergences with respect to $\tau$.

\begin{lem}\label{lem5.4}
   Let $T>0$,
   $1+\alpha-2/d\leq p \leq 1+\alpha$ and assume that $\chi>0$ is small enough if $p=1+\alpha-2/d$. 
   There exist a subsequence $\{(u_{\tau_n},v_{\tau_n})\}_n$ with $\tau_n \to 0$ as $n\to\infty$
   and a pair of functions $(u_\varepsilon, v_\varepsilon) \in X$ such that
   \begin{align*}
      &u_{\tau_n} \rightharpoonup u_\varepsilon\quad 
       \mathrm{weakly\ in}\ L^2((0,T);W^{1,p+1-\alpha}(\Omega))\ \mathrm{as}\ n\to\infty,\\
      &u_{\tau_n}(t) \rightharpoonup u_\varepsilon(t) \quad 
       \mathrm{weakly\ in}\ L^1\cap L^{p+1-\alpha}(\Omega)\ \mathrm{as}\ n\to\infty\ \mathrm{for}\ t\in[0,T],\\
      &v_{\tau_n} \rightharpoonup v_\varepsilon\quad
       \mathrm{weakly\ in}\ L^2((0,T);H^2(\Omega))\ \mathrm{as}\ n\to\infty,\\
      &v_{\tau_n}(t) \rightharpoonup v_\varepsilon\quad
       \mathrm{weakly\ in}\ H^1(\Omega)\ \mathrm{as}\ n\to\infty\ \mathrm{for}\ t\in[0,T].
   \end{align*}
   In particular, $v_\varepsilon \in C^{\frac{1}{2}}([0,T];L^2(\Omega))$.
\end{lem}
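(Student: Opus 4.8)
The plan is to treat the two components separately and, for each, to combine a time-integrated weak-compactness argument with a pointwise-in-time argument based on the refined Ascoli--Arzel\`a theorem \cite[Proposition 3.3.1]{AGS}. Throughout we use that $p=1+\alpha$ forces $p+1-\alpha=2$, so that $(u_\tau)^{\frac{p+1-\alpha}{2}}=u_\tau$; hence Lemma \ref{lem5.2} makes $\{u_\tau\}$ bounded in $L^\infty((0,T);L^2(\Omega))\cap L^2((0,T);H^1(\Omega))$ and $\{v_\tau\}$ bounded in $L^\infty((0,T);H^1(\Omega))\cap L^2((0,T);H^2(\Omega))$, uniformly in $\tau\in(0,1)$. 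Since $L^2((0,T);H^1(\Omega))$ and $L^2((0,T);H^2(\Omega))$ are reflexive, the Banach--Alaoglu theorem yields, after successive extraction, a sequence $\tau_n\to0$ along which $u_{\tau_n}\rightharpoonup u_\varepsilon$ weakly in $L^2((0,T);H^1(\Omega))$ and $v_{\tau_n}\rightharpoonup v_\varepsilon$ weakly in $L^2((0,T);H^2(\Omega))$.

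The heart of the lemma is the pointwise-in-time convergence. For $u$ I would invoke \cite[Proposition 3.3.1]{AGS} with the weak* topology $\sigma$ on $\mathcal{M}_{loc}^+(\Rd)$ and the pseudo-distance $W_{m_\varepsilon}$: the set $K:=\{\mu\in\mathcal{M}^+(\Omega):\mu(\Omega)=1,\ \|\mu\|_{L^2(\Omega)}\le C_1^{1/2}\}$ is $\sigma$-sequentially compact (by Proposition \ref{prop2.5} together with the uniform $L^2$ bound and weak*-lower semicontinuity of the $L^2$ norm, which also keeps the limit in $K$), the map $W_{m_\varepsilon}$ is $\sigma$-sequentially lower semicontinuous by Lemma \ref{lem2.7}, and Lemma \ref{lem5.3} provides the asymptotic modulus of continuity, since $\limsup_{n}W_{m_\varepsilon}(u_{\tau_n}(t),u_{\tau_n}(s))\le C_4\sqrt{|t-s|}$ once the $\sqrt{\tau_n}$ term is sent to $0$. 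This gives a further subsequence along which $u_{\tau_n}(t)\rightharpoonup u_\varepsilon(t)$ weakly* in $\mathcal{M}_{loc}^+(\Rd)$ for every $t\in[0,T]$. To upgrade this to weak $L^1\cap L^2$ convergence I would use that $\{u_{\tau_n}(t)\}_n$ is bounded in $L^2(\Omega)$: testing any weak $L^2$ cluster point against $C_c^\infty(\Omega)$ identifies it with the weak* measure limit, so $u_\varepsilon(t)\in L^2(\Omega)$ and the whole sequence converges weakly in $L^2(\Omega)$, while boundedness in $L^2$ on the bounded set $\Omega$ gives equi-integrability and hence weak $L^1(\Omega)$ convergence via Dunford--Pettis.

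The same scheme applied to $v$ uses $\sigma$ equal to the weak topology of $H^1(\Omega)$, $d(f,g)=\|f-g\|_{L^2(\Omega)}$ (which is $\sigma$-sequentially lower semicontinuous thanks to the compact Rellich embedding), $K$ a closed ball of $H^1(\Omega)$, and the second estimate of Lemma \ref{lem5.3}. It yields $v_{\tau_n}(t)\rightharpoonup v_\varepsilon(t)$ weakly in $H^1(\Omega)$ for every $t\in[0,T]$, and the limit inherits the modulus $\|v_\varepsilon(t)-v_\varepsilon(s)\|_{L^2(\Omega)}\le C_4\sqrt{|t-s|}$, i.e.\ $v_\varepsilon\in C^{\frac{1}{2}}([0,T];L^2(\Omega))$.

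Finally I would verify consistency of the two limits and membership in $X$. For fixed $f\in C_c^\infty(\Omega)$ and $\psi\in L^2(0,T)$, the pointwise weak convergence together with the uniform $L^2$ bound and dominated convergence gives $\int_0^T\psi(t)\langle u_{\tau_n}(t),f\rangle\,dt\to\int_0^T\psi(t)\langle u_\varepsilon(t),f\rangle\,dt$, which matches the time-integrated weak limit; by density the pointwise limit curve represents the Bochner limit for a.e.\ $t$, and similarly for $v$. Passing the constraint $\|u_\tau(t)\|_{L^1(\Omega)}=1$ and nonnegativity through the weak limits shows $u_\varepsilon(t)\in L^2\cap\mathcal{P}(\Omega)$ and $v_\varepsilon(t)\in H^1(\Omega)$, so $(u_\varepsilon,v_\varepsilon)\in X$. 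I expect the main obstacle to be the correct application of the refined Ascoli--Arzel\`a theorem for the \emph{pseudo}-distance $W_{m_\varepsilon}$ in the weak* topology---whose hypotheses rely essentially on the lower semicontinuity in Lemma \ref{lem2.7}---together with the careful discarding of the $\sqrt{\tau_n}$ defect in the equicontinuity estimate before upgrading the resulting weak* convergence of measures to weak convergence in $L^1\cap L^2(\Omega)$.
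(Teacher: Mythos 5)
Your proposal is correct and follows essentially the same route as the paper's proof: weak compactness in $L^2((0,T);H^1(\Omega))$ and $L^2((0,T);H^2(\Omega))$ from Lemma \ref{lem5.2}, the refined Ascoli--Arzel\`a theorem combined with the equi-continuity estimate of Lemma \ref{lem5.3} for the pointwise-in-time convergences, and identification of the pointwise limit curve with the Bochner weak limit. The paper's version is simply terser, leaving implicit the verification of the Ascoli--Arzel\`a hypotheses (choice of topology, compact set, lower semicontinuity of $W_{m_\varepsilon}$ via Lemma \ref{lem2.7}, discarding the $\sqrt{\tau_n}$ defect) and the upgrade from weak* measure convergence to weak $L^1\cap L^2(\Omega)$ convergence, all of which you carry out correctly.
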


\begin{proof}
   By Lemma \ref{lem5.2}, $\{u_\tau\}_{\tau>0}$ is bounded in $L^2((0,T);W^{1,p+1-\alpha}(\Omega))$,
   then there exist a subsequence $\{u_{\tau_n}\}$ and a function $u_\varepsilon \in L^2((0,T);W^{1,p+1-\alpha}(\Omega))$
   such that $u_{\tau_n}$ weakly converges to $u_\varepsilon$ in $L^2((0,T);W^{1,p+1-\alpha}(\Omega))$.
   In addition, by Lemma \ref{lem5.3} and the refined Ascoli--Arzel\`a theorem (\cite[Proposition 3.3.1]{AGS}), 
   there exist a subsequence (not relabeled) and $\tilde{u}_\varepsilon : [0,T] \to \mathcal{P}(\Omega)$ such that
   $$u_{\tau_n}(t) \rightharpoonup \tilde{u}_\varepsilon(t) \quad 
   \mathrm{weakly\ in}\ L^1\cap L^{p+1-\alpha}(\Omega)\ \mathrm{as}\ n\to\infty\ \mathrm{for}\ t \in [0,T].$$
   Due to the uniqueness of limit, 
   we have $u_\varepsilon = \tilde{u}_\varepsilon$ a.e. in $\Omega\times[0,T]$.
   Similarly, by Lemma \ref{lem5.2}, $\{v_\tau\}_{\tau>0}$ is bounded in $L^2((0,T);H^2(\Omega))$ and
   by Lemma \ref{lem5.3} and the refined Ascoli--Arzel\`a theorem, we have
   \begin{align*}
      &v_{\tau_n} \rightharpoonup v_\varepsilon\quad
       \mathrm{weakly\ in}\ L^2((0,T);H^2(\Omega))\ \mathrm{as}\ n\to\infty,\\
      &v_{\tau_n}(t) \rightharpoonup v_\varepsilon\quad
       \mathrm{weakly\ in}\ H^1(\Omega)\ \mathrm{as}\ n\to\infty\ \mathrm{for}\ t\in[0,T],\\
      &v_\varepsilon \in C^{\frac{1}{2}}([0,T];L^2(\Omega)).
   \end{align*}
   The proof is completed.
\end{proof}

\quad In previous lemma, we derived the weak convergences with respect to $\tau$,
hence we next obtain the strong convergence for $\tau$.

\begin{lem}\label{lem5.5}
   Let $T>0$, $1+\alpha-2/d \leq p \leq 1+\alpha$ 
   and assume that $\chi>0$ is small enough if $p=1+\alpha-2/d$. 
   Then for the sequence $\{u_{\tau_n}\}_n$ in Lemma \ref{lem5.4}, it holds
   \begin{align*}
      &u_{\tau_n} \to u_\varepsilon\quad \mathrm{strongly\ in}\ L^2((0,T);L^{p+1-\alpha}(\Omega))\ 
      \mathrm{as}\ n\to\infty,\\
      &u_{\tau_n}(x,t) \to u_\varepsilon(x,t)\quad
      \mathrm{a.e.\ in}\ \Omega\times(0,T)\ \mathrm{as}\ n\to\infty.
   \end{align*}
\end{lem}

\begin{proof}
   Note that by the Rellich--Kondrachov theorem,
   $H^{d+1}(\Omega) = W^{d+1,2}(\Omega)$ is compactly embedded in $H^d(\Omega)$
   and by the Sobolev embedding theorem, $H^d(\Omega)$ 
   is continuously embedded in $L^{\frac{p+1-\alpha}{p-\alpha}}(\Omega)$.
   Hence it holds that
   $H^{-d}(\Omega)$ is compactly embedded in $H^{-(d+1)}(\Omega)$,
   where $H^{-d}(\Omega)$ is the dual space of $H^d(\Omega)$,
   and $L^{p+1-\alpha}(\Omega)$ is continuously embedded in $H^{-d}(\Omega)$.
   By Lemma \ref{lem5.2}, $\|u_{\tau_n}(t)\|_{L^{p+1-\alpha}(\Omega)}$ is bounded with respect to $\tau_n$ for all $t \in[0,T]$,
   thus there exist a subsequence (not relabeled) and $w_t \in H^{-(d+1)}(\Omega)$ such that
   $u_{\tau_n}(t)$ converges to $w_t$ strongly in $H^{-(d+1)}(\Omega)$.
   Now thanks to Lemma \ref{lem5.4}, 
   we know that $u_{\tau_n}(t)$ weakly converges to $u_\varepsilon(t)$ in $L^{p+1-\alpha}(\Omega)$.
   Due to the uniqueness of limit, we have $w_t = u_\varepsilon(t)$ a.e. in $\Omega$.
   Moreover, by Lemma \ref{lem5.2} and Lemma \ref{lem5.4}, we have
   \begin{align*}
      \sup_{n\in\mathbb{N}}\sup_{0\leq t\leq T}\|u_{\tau_n}(t) - u_\varepsilon(t)\|_{H^{-(d+1)}(\Omega)}^2 
      \leq \sup_{n\in\mathbb{N}}\sup_{0\leq t\leq T}\|u_{\tau_n}(t) - u_\varepsilon(t)\|_{L^{p+1-\alpha}(\Omega)}^2 < \infty.
   \end{align*}
   Hence we infer from Lebesgue's dominated convergence theorem that
   \begin{align*}
      \int_0^T \|u_{\tau_n}(t) - u_\varepsilon(t)\|_{H^{-(d+1)}(\Omega)}^2\, dt \to 0\quad
      \mathrm{as}\ n\to\infty,
   \end{align*}
   which implies that
   $\{u_{\tau_n}\}_n$ is relatively compact in $L^2((0,T);H^{-(d+1)}(\Omega))$.
   Since $\{u_{\tau_n}\}_n$ is bounded in $L^2((0,T);W^{1,p+1-\alpha}(\Omega))$ due to Lemma \ref{lem5.2},
   by \cite[Lemma 9]{S}, $\{u_{\tau_n}\}_n$ is relatively compact in $L^2((0,T);L^{p+1-\alpha}(\Omega))$.
   Therefore, taking a subsequence (not relabeled),
   $u_{\tau_n}$ converges to $u_\varepsilon$ strongly in $L^2((0,T);L^{p+1-\alpha}(\Omega))$ as $n\to\infty$.
   In addition, taking a subsequence if necessary, 
   $u_{\tau_n}(x,t) \to u_\varepsilon(x,t)$ a.e. in $\Omega\times(0,T)$.
\end{proof}

\begin{lem}\label{lem5.6}
   Let $T>0$, $1+\alpha-2/d \leq p \leq 1+\alpha$ 
   and assume that $\chi>0$ is small enough if $p=1+\alpha-2/d$. 
   Then for the sequence $\{u_{\tau_n}\}_n$ in Lemma \ref{lem5.4}, it holds
   \begin{align*}
      \nabla (u_{\tau_n})^p \rightharpoonup \nabla (u_\varepsilon)^p \quad
      \mathrm{weakly\ in}\ L^{\frac{p+1-\alpha}{p}}(\Omega\times(0,T))\ 
      \mathrm{as}\ n\to\infty.
   \end{align*}
   Moreover there exists a constant $C_7 = C_7(\alpha,p,d,\chi,u_0,v_0)>0$
   such that
   \begin{align}\label{eq92}
      \int_0^T\int_{\Omega} |\nabla (u_{\tau_n})^p|^{\frac{p+1-\alpha}{p}}\, dx\, dt
      \leq C_7(1+T).
   \end{align}
\end{lem}

\begin{proof}
   Since $\nabla(u_{\tau_n})^p = 2p/(p+1-\alpha)u_{\tau_n}^{\frac{p+\alpha-1}{2}}\nabla (u_{\tau_n})^{\frac{p+1-\alpha}{2}}$,
   we infer from H\"older's inequality and Lemma \ref{lem5.2} that
   \begin{align*}
      &\int_0^T\int_{\Omega} |\nabla(u_{\tau_n})^p|^{\frac{p+1-\alpha}{p}}\, dx\, dt\\
      &= \int_0^T\int_{\Omega} \left(\frac{2p}{p+1-\alpha}\right)^{\frac{p+1-\alpha}{p}}
       (u_{\tau_n})^{\frac{p+\alpha-1}{2}\frac{p+1-\alpha}{p}}|\nabla(u_{\tau_n})^{\frac{p+1-\alpha}{2}}|^{\frac{p+1-\alpha}{p}}\, dx\, dt\\
      &\leq \left(\frac{2p}{p+1-\alpha}\right)^{\frac{p+1-\alpha}{p}}
       \int_0^T\left(\int_{\Omega} |\nabla (u_{\tau_n})^{\frac{p+1-\alpha}{2}}|^2\, dx\right)^{\frac{p+1-\alpha}{2p}}
       \left(\int_{\Omega} (u_{\tau_n})^{p+1-\alpha}\, dx\right)^{\frac{p+\alpha-1}{2p}}\, dt\\
      &\leq \left(\frac{2p}{p+1-\alpha}\right)^{\frac{p+1-\alpha}{p}}
       C_1^{\frac{p+\alpha-1}{2p}}C_2^{\frac{p+1-\alpha}{2p}}(1+T).
   \end{align*}
   Hence there exist a subsequence (not relabeled) and 
   $y_\varepsilon \in L^{\frac{p+1-\alpha}{p}}(\Omega\times(0,T))$
   such that $\nabla(u_{\tau_n})^p \rightharpoonup y_\varepsilon$
   weakly in $L^{\frac{p+1-\alpha}{p}}(\Omega\times(0,T))$ as $n\to\infty$.
   Combining this with Lemma \ref{lem5.5}, we see that
   $\nabla(u_{\tau_n})^p$ converges to $\nabla(u_\varepsilon)^p$ 
   weakly in $L^{\frac{p+1-\alpha}{p}}(\Omega\times(0,T))$ as $n\to\infty$.
\end{proof}


\section{Proof of Theorem \ref{thm1.1} and Theorem \ref{thm1.4}}

\quad First, we establish weak formulations of the system \ref{peks_e}.

\begin{lem}\label{lem5.7}
   Let $1+\alpha-2/d\leq p\leq 1+\alpha$ and assume that $\chi>0$ is small enough if $p=1+\alpha-2/d$. 
   Then $(u_\varepsilon,v_\varepsilon)$ in Lemma \ref{lem5.4} satisfies the following weak formulation: 
   for all $T>0$ and $\varphi \in C^\infty(\overline{\Omega})$ with $\nabla \varphi\cdot\boldsymbol{n} = 0$ on $\partial\Omega$, 
   it holds 
   \begin{align}\label{eq12}
      \int_{\Omega} (u_0(x) - u_\varepsilon(x,T))\varphi(x)\, dx
      &= - \int_0^T\int_{\Omega} \frac{\alpha}{p-\alpha}\left(\frac{u_\varepsilon(x,t)}{u_\varepsilon(x,t) + \varepsilon}\right)^{1-\alpha} \nabla u_\varepsilon(x,t)^p\cdot\nabla\varphi(x)\, dx\, dt\notag\\
      &\quad - \int_0^T\int_{\Omega} \frac{p}{p-\alpha}u_\varepsilon(x,t)^{p-\alpha} m_\varepsilon(u_\varepsilon(x,t))\Delta\varphi\, dx\, dt\notag\\
      &\quad - \int_0^T\int_{\Omega} \chi m_\varepsilon(u_\varepsilon(x,t)) \nabla v_\varepsilon(x,t)\cdot\nabla\varphi(x)\, dx\, dt.
   \end{align}
\end{lem}

\begin{proof}
   Let $T>0$ and $\varphi \in C^\infty(\overline{\Omega})$ 
   with $\nabla\varphi\cdot\boldsymbol{n} = 0$ on $\partial\Omega$.
   Let $\{\tau_n\} \subset (0,1)$ be a subsequence of $\{\tau\}$ 
   which is obtained in Lemma \ref{lem5.4} and Lemma \ref{lem5.5},
   and set $\delta_n \coloneq \tau_n^{\frac{1}{2}}$.
   To simplify, we assume that $T=N\tau_n$ for some $N\in \mathbb{N}$.
   By Lemma \ref{lem4.9}, we have
   \begin{align*}
      &\frac{\boldsymbol{V}_{\delta_n}(u_{\tau_n}(T)) - \boldsymbol{V}_{\delta_n}(u_{\tau_n}(0))}{\chi}\\
      &= \frac{\boldsymbol{V}_{\delta_n}(u_{\tau_n}^N) - \boldsymbol{V}_{\delta_n}(u_0)}{\chi}
       = \sum_{k=1}^N \frac{\boldsymbol{V}_{\delta_n}(u_{\tau_n}^k) - \boldsymbol{V}_{\delta_n}(u_{\tau_n}^{k-1})}{\chi}\\
      &\leq \sum_{k=1}^N \tau_n\left[\frac{p}{\chi(p-\alpha)}\int_{\Omega} (u_{\tau_n}^k)^{p-\alpha} \nabla\cdot(m_\varepsilon(u_{\tau_n}^k)\nabla\varphi)\, dx
       + \int_{\Omega} m_\varepsilon(u_{\tau_n}^k)\nabla v_{\tau_n}^k\cdot\nabla\varphi\, dx\right]\\
      &\quad -\tau_n\lambda_{\delta_n}\sum_{k=1}^N (E(u_{\tau_n}^{k-1},v_{\tau_n}^{k-1}) - E(u_{\tau_n}^k,v_{\tau_n}^k))
       - \delta_n\sum_{k=1}^N \tau_n \int_{\Omega} (\Delta v_{\tau_n}^k) u_{\tau_n}^k\, dx\\
      &= \int_0^T \int_{\Omega} \frac{p}{\chi(p-\alpha)}(u_{\tau_n}(t))^{p-\alpha} \nabla\cdot(m_\varepsilon(u_{\tau_n}(t))\nabla\varphi)\, dx\, dt
       + \int_0^T\int_{\Omega} m_\varepsilon(u_{\tau_n}(t))\nabla v_{\tau_n}(t)\cdot\nabla\varphi\, dx\, dt\\
      &\quad -\tau_n\lambda_{\delta_n} \left(E(u_0,v_0) - E(u_{\tau_n}^N,v_{\tau_n}^N)\right)
       - \delta_n \int_0^T\int_{\Omega} (\Delta v_{\tau_n}(t)) u_{\tau_n}(t)\, dx\, dt.
   \end{align*}
   Here, by the definition of $\lambda_{\delta_n}$, we see 
   \begin{align*}
      \tau_n|\lambda_{\delta_n}| 
      &\leq \tau_n(\delta_n^{-1}+1)C(\varphi,\alpha,\varepsilon)
      \leq 2\tau_n^{\frac{1}{2}}C(\varphi, \alpha, \varepsilon),
   \end{align*}
   and by H\"older's inequality, the Sobolev embedding and Lemma \ref{lem5.2}, it follows
   \begin{align*}
      \int_0^T\int_{\Omega} |(\Delta v_{\tau_n}(t))| |u_{\tau_n}(t)|\, dx\, dt
      &\leq \left(\int_0^T \|\Delta v_{\tau_n}(t)\|_{L^2(\Omega)}^2\, dt\right)^{\frac{1}{2}}
       \left(\int_0^T \|u_{\tau_n}(t)\|_{L^2(\Omega)}^2\, dt\right)^{\frac{1}{2}}\\
      &\leq (C_3C_4)^{\frac{1}{2}}(1+T).
   \end{align*}
   Hence taking account of the definition of $\boldsymbol{V}_{\delta_n}$
   and the boundedness from below of $E$ in $X$ (Lemma \ref{lem3.1}), we obtain 
   \begin{align}\label{eq13}
      &\int_{\Omega} (u_{\tau_n}(x,T) - u_0(x))\varphi(x)\, dx\notag\\
      &\leq \int_0^T \int_{\Omega} \frac{p}{\chi(p-\alpha)}(u_{\tau_n}(t))^{p-\alpha} \nabla m_\varepsilon(u_{\tau_n}(t))\cdot\nabla\varphi\, dx\, dt\notag\\
      &\quad + \int_0^T \int_{\Omega} \frac{p}{\chi(p-\alpha)}(u_{\tau_n}(t))^{p-\alpha}m_\varepsilon(u_{\tau_n}(t))\Delta\varphi\, dx\, dt\notag\\
      &\quad + \int_0^T\int_{\Omega} m_\varepsilon(u_{\tau_n}(x,t))\nabla v_{\tau_n}(x,t)\cdot\nabla\varphi(x)\, dx\,dt\notag\\
      &\quad + \tau_n^{\frac{1}{2}}\left[2C(\varphi,\alpha,\varepsilon)\left(E(u_0,v_0) - \inf_{(\tilde{u},\tilde{v})\in X}E(\tilde{u},\tilde{v})\right) + (C_3C_4)^{\frac{1}{2}}(1+T)\right]\notag\\
      &\quad + \tau_n^{\frac{1}{2}} \left[\boldsymbol{U}_\varepsilon(u_0) - \boldsymbol{U}_\varepsilon(u_{\tau_n}(T))\right].
   \end{align}
   By Lemma \ref{lem2.11}, we have
   \begin{equation*}
      \boldsymbol{U}_\varepsilon(u_0) \leq \frac{1}{1-\alpha}\|u_0\|_{L^{2-\alpha}(\Omega)}^{2-\alpha}\ 
      \mbox{and}\ \boldsymbol{U}_{\varepsilon}(u_{\tau_n}(T)) \geq 0.
   \end{equation*}
   Thanks to Lemma \ref{lem5.4}, it is easy to check that 
   \begin{align*}
      \int_{\Omega} u_{\tau_n}(x,T) \varphi(x)\, dx
      \to \int_{\Omega} u_\varepsilon(x,T) \varphi(x)\, dx
      \quad \mathrm{as}\ n \to \infty.
   \end{align*}
   We will show 
   \begin{align*}
      &\int_0^T \int_{\Omega} (u_{\tau_n})^{p-\alpha} \nabla m_\varepsilon(u_{\tau_n})\cdot\nabla\varphi\, dx\, dt
      = \int_0^T \int_{\Omega} \frac{\alpha}{p}\left(\frac{u_{\tau_n}}{u_{\tau_n}+\varepsilon}\right)^{1-\alpha} \nabla (u_{\tau_n})^p\cdot\nabla\varphi\, dx\, dt\\
      &\hspace{6.3cm} \to 
      \int_0^T \int_{\Omega} \frac{\alpha}{p}\left(\frac{u_\varepsilon}{u_\varepsilon+\varepsilon}\right)^{1-\alpha} \nabla (u_\varepsilon)^p\cdot\nabla\varphi\, dx\, dt
      \quad \mathrm{as}\ n \to \infty,\\
      &\int_0^T \int_{\Omega} (u_{\tau_n})^{p-\alpha}m_\varepsilon(u_{\tau_n})\Delta\varphi\, dx\, dt
      \to 
      \int_0^T \int_{\Omega} (u_\varepsilon)^{p-\alpha}m_\varepsilon(u_\varepsilon)\Delta\varphi\, dx\, dt
      \quad \mathrm{as}\ n \to \infty
   \end{align*} 
   and
   \begin{align*}
      \int_0^T\int_{\Omega} m_\varepsilon(u_{\tau_n})\nabla v_{\tau_n}\cdot\nabla\varphi\, dx\,dt
      \to 
       \int_0^T\int_{\Omega} m_\varepsilon(u_\varepsilon)\nabla v_\varepsilon\cdot\nabla\varphi\, dx\,dt
       \quad \mathrm{as}\ n \to \infty.
   \end{align*}
   First, we have
   \begin{align*}
      &\left|\int_0^T\int_{\Omega} \left(\frac{u_{\tau_n}}{u_{\tau_n}+\varepsilon}\right)^{1-\alpha} \nabla (u_{\tau_n})^p\cdot\nabla\varphi\, dx\, dt
      - \int_0^T\int_{\Omega} \left(\frac{u_\varepsilon}{u_\varepsilon+\varepsilon}\right)^{1-\alpha} \nabla (u_\varepsilon)^p\cdot\nabla\varphi\, dx\, dt\right|\\
      &\leq \left|\int_0^T\int_{\Omega} \left[\left(\frac{u_{\tau_n}}{u_{\tau_n}+\varepsilon}\right)^{1-\alpha} - \left(\frac{u_\varepsilon}{u_\varepsilon+\varepsilon}\right)^{1-\alpha}\right]
       \nabla (u_{\tau_n})^p\cdot\nabla\varphi\, dx\, dt\right|\\
      &\quad + \left|\int_0^T\int_{\Omega} \left(\frac{u_\varepsilon}{u_\varepsilon+\varepsilon}\right)^{1-\alpha}(\nabla (u_{\tau_n})^p - \nabla (u_\varepsilon)^p)\cdot\nabla\varphi\, dx\, dt\right|.
   \end{align*}
   Since $a/(a+\varepsilon) \leq 1$ for $a\geq0$,
   we infer from Lemma \ref{lem5.5}, \eqref{eq92} and Lebesgue's dominated converge theorem that
   the first term converges to $0$ as $n\to\infty$.
   Further, by Lemma \ref{lem5.6}, we obtain that 
   the second term converges to $0$ as $n\to\infty$.
   Next, we have
   \begin{align*}
      &\left|\int_0^T\int_{\Omega} (u_{\tau_n})^{p-\alpha}m_\varepsilon(u_{\tau_n})\Delta\varphi\, dx\,dt
       - \int_0^T\int_{\Omega} (u_\varepsilon)^{p-\alpha}m_\varepsilon(u_\varepsilon)\Delta\varphi\, dx\,dt\right|\\
      &\leq \int_0^T\int_{\Omega} |(u_{\tau_n})^{p-\alpha} - (u_\varepsilon)^{p-\alpha}|m_\varepsilon(u_{\tau_n})|\Delta\varphi|\, dx\, dt
       + \int_0^T\int_{\Omega} |m_\varepsilon(u_{\tau_n}) - m_\varepsilon(u_\varepsilon)|(u_\varepsilon)^{p-\alpha}|\Delta\varphi|\, dx\,dt\\
      &\eqcolon \mathrm{I}_1 + \mathrm{I}_2.
   \end{align*}
   Observe that $m_\varepsilon(r) = (r+\varepsilon)^\alpha$,
   we infer from \eqref{eq43} and H\"older's inequality that 
   \begin{align*}
      \mathrm{I}_1
      &\leq \int_0^T\int_{\Omega}|u_{\tau_n} - u_\varepsilon|^{p-\alpha} m_\varepsilon(u_{\tau_n})|\Delta\varphi|\, dx\,dt\\
      &\leq \|\Delta\varphi\|_{L^\infty(\Omega)}\int_0^T \left(\int_{\Omega}|u_{\tau_n} - u_\varepsilon|^{p+1-\alpha}\, dx\right)^{\frac{p-\alpha}{p+1-\alpha}}
       \left(\int_{\Omega} m_\varepsilon(u_{\tau_n})^{p+1-\alpha}\, dx\right)^{\frac{1}{p+1-\alpha}}\, dt\\
      &\leq \|\Delta\varphi\|_{L^\infty(\Omega)} \|u_{\tau_n} - u_\varepsilon\|_{L^2((0,T);L^{p+1-\alpha}(\Omega))}^{p-\alpha}
       \left(\int_0^T \|(u_{\tau_n} + \varepsilon)^\alpha\|_{L^{p+1-\alpha}}^{\frac{2}{2-p+\alpha}}\, dt\right)^{\frac{2-p+\alpha}{2}}.
   \end{align*}
   By Lemma \ref{lem5.5} and \eqref{eq61}, we have $\mathrm{I}_1 \to 0$ as $n \to \infty$.
   By the similar argument, we obtain 
   $\mathrm{I}_2 \to 0$ as $n \to \infty$.
   Finally, we have
   \begin{align*}
      &\left|\int_0^T\int_{\Omega} m_\varepsilon(u_{\tau_n})\nabla v_{\tau_n}\cdot\nabla\varphi\, dx\,dt
       - \int_0^T\int_{\Omega} m_\varepsilon(u_\varepsilon)\nabla v_\varepsilon\cdot\nabla\varphi\, dx\,dt\right|\\
      &\leq \int_0^T\int_{\Omega} |m_\varepsilon(u_{\tau_n}) - m_\varepsilon(u_\varepsilon)||\nabla v_{\tau_n}||\nabla\varphi|\, dx\,dt
       + \left|\int_0^T\int_{\Omega} m_\varepsilon(u_\varepsilon)(\nabla v_{\tau_n} - \nabla v_\varepsilon)\nabla\varphi\, dx\,dt\right|.
   \end{align*}
   Since $\sup_{0\leq t\leq T}\|\nabla v_{\tau_n}(t)\|_{L^2(\Omega)}$ is bounded,
   as in the above argument,
   the first term converges to $0$ as $n\to\infty$.
   In additon,
   since $\nabla v_{\tau_n}$ converges to $\nabla v_\varepsilon$ weakly in $L^2(\Omega\times(0,T))$
   and $m_\varepsilon(u_\varepsilon)\nabla\varphi \in L^2(\Omega\times(0,T))$,
   the second term also converges to $0$ as $n\to\infty$.

   Hence by letting $n \to \infty$ in \eqref{eq13}, it follows
   \begin{align*}
      \frac{1}{\chi}\int_{\Omega} (u_\varepsilon(x,T) - u_0(x))\varphi(x)\, dx
      &\leq \int_0^T\int_{\Omega} \frac{\alpha}{\chi(p-\alpha)} \left(\frac{u_\varepsilon(x,t)}{u_\varepsilon(x,t)+\varepsilon}\right)^{1-\alpha}\nabla u_\varepsilon(x,t)^p\cdot\nabla\varphi(x)\, dx\,dt\\
      &\quad + \int_0^T \int_{\Omega} \frac{\alpha}{\chi(p-\alpha)}(u_\varepsilon(x,t))^{p-\alpha}m_\varepsilon(u_\varepsilon(x,t))\Delta\varphi(x)\, dx\, dt\\
      &\quad + \int_0^T\int_{\Omega} m_\varepsilon(u_\varepsilon(x,t))\nabla v_\varepsilon(x,t)\cdot\nabla\varphi(x)\, dx\,dt.
   \end{align*}
   Replacing $\varphi$ with $-\varphi$, we have 
   \begin{align*}
      \int_{\Omega} (u_0(x) - u_\varepsilon(x,T))\varphi(x)\, dx
      &= - \int_0^T\int_{\Omega} \frac{\alpha}{p-\alpha} \left(\frac{u_\varepsilon(x,t)}{u_\varepsilon(x,t)+\varepsilon}\right)^{1-\alpha}\nabla u_\varepsilon(x,t)^p\cdot\nabla\varphi(x)\, dx\,dt\\
      &\quad - \int_0^T \int_{\Omega} \frac{\alpha}{p-\alpha}(u_\varepsilon(x,t))^{p-\alpha}m_\varepsilon(u_\varepsilon(x,t))\Delta\varphi(x)\, dx\, dt\\
      &\quad - \int_0^T\int_{\Omega} \chi m_\varepsilon(u_\varepsilon(x,t))\nabla v_\varepsilon(x,t)\cdot\nabla\varphi(x)\, dx\,dt.
   \end{align*}
   The proof is completed.
\end{proof}

\begin{lem}\label{lem6.2}
   Let $T>0$. Then 
   $(u_\varepsilon,v_\varepsilon)$ satisfies the following weak formulation:
   for all $\zeta \in H^1(\Omega)$, it holds 
   \begin{align*}
      \int_0^T\int_{\Omega} [\nabla v_\varepsilon\cdot\nabla \zeta + v_\varepsilon\zeta - u_\varepsilon\zeta]\, dx\, dt
      = \int_{\Omega} (v_0 - v_\varepsilon(T))\zeta\, dx.
   \end{align*}
\end{lem}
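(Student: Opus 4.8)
The plan is to start from the discrete Euler--Lagrange equation for $v_\tau^k$ established in Lemma \ref{lem4.10}, sum it over one full time interval, and pass to the limit $\tau_n \to 0$ using the convergences from Lemma \ref{lem5.4} and Lemma \ref{lem5.5}. The crucial simplification here, compared with the first weak formulation in Lemma \ref{lem5.7}, is that every term in the target identity is \emph{linear} in $u_\varepsilon$ and $v_\varepsilon$; there is no nonlinear mobility $m_\varepsilon(u)$ to control, so the limit passage reduces to pairing weakly convergent sequences against fixed test functions.

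First I would fix $\zeta \in H^1(\Omega)$, select the subsequence $\{\tau_n\}$ from Lemma \ref{lem5.4} and Lemma \ref{lem5.5}, and assume $T = N\tau_n$ for some $N \in \mathbb{N}$ (the general case follows by the usual argument that the error from taking $N = \lceil T/\tau_n\rceil$ vanishes as $\tau_n \to 0$). Lemma \ref{lem4.10} gives, for each $k \in \{1,\dots,N\}$,
$$\int_{\Omega} (v_{\tau_n}^k - v_{\tau_n}^{k-1})\zeta\, dx + \tau_n\int_{\Omega} (\nabla v_{\tau_n}^k\cdot\nabla\zeta + v_{\tau_n}^k\zeta - u_{\tau_n}^k\zeta)\, dx = 0.$$
Summing over $k=1,\dots,N$, the first sum telescopes to $\int_{\Omega}(v_{\tau_n}(T) - v_0)\zeta\, dx$ since $v_{\tau_n}^0 = v_0$ and $v_{\tau_n}^N = v_{\tau_n}(T)$, while by Definition \ref{dfn5.1} the second sum equals the time integral of the piecewise constant functions. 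This yields the discrete identity
$$\int_{\Omega}(v_{\tau_n}(T) - v_0)\zeta\, dx + \int_0^T\int_{\Omega}(\nabla v_{\tau_n}\cdot\nabla\zeta + v_{\tau_n}\zeta - u_{\tau_n}\zeta)\, dx\, dt = 0.$$

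It then remains to pass to the limit $n\to\infty$ term by term. For the fixed-time term, the pointwise-in-$t$ convergence $v_{\tau_n}(T) \rightharpoonup v_\varepsilon(T)$ weakly in $H^1(\Omega)$ from Lemma \ref{lem5.4} gives weak $L^2$-convergence, hence $\int_{\Omega} v_{\tau_n}(T)\zeta\, dx \to \int_{\Omega} v_\varepsilon(T)\zeta\, dx$. For the gradient and zeroth-order terms, the weak convergence $v_{\tau_n} \rightharpoonup v_\varepsilon$ in $L^2((0,T);H^2(\Omega))$ yields $\nabla v_{\tau_n} \rightharpoonup \nabla v_\varepsilon$ and $v_{\tau_n} \rightharpoonup v_\varepsilon$ weakly in $L^2(\Omega\times(0,T))$; since the (time-independent) functions $\nabla\zeta$ and $\zeta$ lie in $L^2(\Omega\times(0,T))$, the corresponding integrals converge. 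The source term $\int_0^T\int_\Omega u_{\tau_n}\zeta$ is handled by the strong convergence $u_{\tau_n}\to u_\varepsilon$ in $L^2(\Omega\times(0,T))$ from Lemma \ref{lem5.5}, though weak $L^2$-convergence already suffices. Passing to the limit produces exactly the claimed identity.

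The only point requiring genuine care is the fixed-time boundary term $v_{\tau_n}(T)$: one must invoke the \emph{pointwise-in-$t$} weak convergence supplied by Lemma \ref{lem5.4}, not merely the $L^2((0,T);\cdot)$ convergence, to make sense of the value at the single instant $t=T$. Beyond that, the argument is a routine linear limit passage.
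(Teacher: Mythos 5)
Your proposal is correct and follows essentially the same route as the paper: sum the discrete Euler--Lagrange identity of Lemma \ref{lem4.10} over $k=1,\dots,N$ with $T=N\tau_n$, telescope the $v_{\tau_n}^k - v_{\tau_n}^{k-1}$ terms, and pass to the limit using the convergences of Lemma \ref{lem5.4}. Your term-by-term justification (pointwise-in-$t$ weak $H^1$ convergence for the boundary term $v_{\tau_n}(T)$, weak $L^2(\Omega\times(0,T))$ convergence for the space-time integrals, with weak convergence of $u_{\tau_n}$ already sufficient) simply makes explicit what the paper compresses into a single citation of Lemma \ref{lem5.4}.
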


\begin{proof}
   Let $T>0$, $\zeta \in H^1(\Omega)$ and $\{\tau_n\}$ be a subsequence in Lemma \ref{lem5.4} and Lemma \ref{lem5.5}.
   To simplify, we assume $T=N\tau_n$.
   By Lemma \ref{lem4.10}, we have
   \begin{align*}
      \sum_{k=1}^N \tau_n \int_{\Omega} (\nabla v_{\tau_n}^k\cdot\nabla\zeta + v_{\tau_n}^k\zeta - u_{\tau_n}^k\zeta)\, dx
      = \sum_{k=1}^N \int_{\Omega} (v_{\tau_n}^{k-1} - v_{\tau_n}^k)\zeta\, dx
   \end{align*}
   then
   \begin{align*}
      \int_0^T\int_{\Omega} (\nabla v_{\tau_n}(t)\cdot\nabla\zeta + v_{\tau_n}(t)\zeta - u_{\tau_n}(t)\zeta)\, dx\, dt
      = \int_{\Omega} (v_0 - v_{\tau_n}^N)\zeta\, dx.
   \end{align*}
   Hence letting $n\to\infty$, we infer from Lemma \ref{lem5.4} that
   \begin{align*}
      \int_0^T\int_{\Omega} (\nabla v_\varepsilon(t)\cdot\nabla\zeta + v_\varepsilon(t)\zeta - u_\varepsilon(t)\zeta)\, dx\, dt
      = \int_{\Omega} (v_0 - v_\varepsilon(T))\zeta\, dx.
   \end{align*}
   The proof is completed.
\end{proof}

\quad Next lemma is about the weak compactness of $\{u_\varepsilon(t)\}_\varepsilon$ for each $t\in[0,T]$.
As in the Lemma \ref{lem5.4}, if we have the equi-continuity with respect to $W_m$, where $m(r)=r^\alpha$,
then we can easily get the conclusion adapting the refined Ascoli--Arzel\`a theorem (\cite[Proposition 3.3.1]{AGS}).
However, we only have the equi-continuity with respect to $W_{m_\varepsilon}$ depending on $\varepsilon$ (Lemma \ref{lem5.3}).
To avoid this problem, we use not only the equi-continuity 
but also the lower semicontinuity of the weighted Wasserstein distance (see Lemma \ref{lem2.7}).

\begin{lem}\label{lem5.8}
   Let $T>0$,
   $1+\alpha-2/d\leq p\leq 1+\alpha$ and assume that $\chi>0$ is small enough if $p=1+\alpha-2/d$. 
   There exist a subsequence $\{u_{\varepsilon_n}\}_n$ with $\varepsilon_n \to 0$ as $n\to\infty$
   and $u : [0,T] \to \mathcal{P}(\Omega)$ such that
   $$u_{\varepsilon_n}(t) \rightharpoonup u(t) 
   \quad \mathrm{weakly\ in}\ L^1\cap L^{p+1-\alpha}(\Omega)\ \mathrm{as}\ n\to\infty\ \mathrm{for}\ t \in [0,T].$$
   In particular, $W_m(u(t),u(s)) \leq C_6\sqrt{|t-s|}\ \mathrm{for}\ t,s\in [0,T]$,
   where $m(r) = r^\alpha$.
\end{lem}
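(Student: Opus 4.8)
The plan is to regard the family $\{u_\varepsilon\}_\varepsilon$ as a family of curves taking values in the weakly compact set $K\coloneq\{\mu\in\mathcal P(\Omega)\cap L^2(\Omega):\|\mu\|_{L^2(\Omega)}\le\sqrt{C_1}\}$, and to extract a pointwise (in $t$) weak limit by a refined Ascoli--Arzel\`a argument (\cite[Proposition 3.3.1]{AGS}) in which the role of the distance is played by the lower semicontinuous pseudo-distance $W_m$ with $m(r)=r^\alpha$. The essential difficulty, flagged before the statement, is that the equi-continuity estimate available to us (Lemma~\ref{lem5.3}) is written in $W_{m_\varepsilon}$, a distance that degenerates as $\varepsilon\to0$; the device that resolves this is the monotone lower semicontinuity of Lemma~\ref{lem2.7}, which lets the fixed limit distance $W_m$ inherit the uniform modulus carried by the $W_{m_\varepsilon}$.

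First I would record uniform-in-$\varepsilon$ bounds on $u_\varepsilon$. Since $u_\varepsilon$ is produced in Lemma~\ref{lem5.4} as the weak limit of $u_{\tau_n}$, and $p=1+\alpha$ gives $p+1-\alpha=2$, passing to the limit $\tau_n\to0$ in \eqref{eq61} yields $\sup_{t\in[0,T]}\|u_\varepsilon(t)\|_{L^2(\Omega)}^2\le C_1$; this is uniform in $\varepsilon$ because the constants of Lemmas~\ref{lem5.2}--\ref{lem5.3} and of Lemma~\ref{lem2.11} are independent of $\varepsilon$. Likewise, for each fixed $\varepsilon$, combining the lower semicontinuity of $W_{m_\varepsilon}$ (Lemma~\ref{lem2.7} with the constant family $m_\varepsilon$ and $\Omega_n=\Omega$), applied to $u_{\tau_n}(t)\rightharpoonup u_\varepsilon(t)$ in $L^2(\Omega)$ (hence weakly* in $\mathcal M_{loc}^+(\Rd)$), with Lemma~\ref{lem5.3}, I obtain the equi-continuity estimate
\begin{equation*}
   W_{m_\varepsilon}(u_\varepsilon(t),u_\varepsilon(s))\le C_4\sqrt{|t-s|}\qquad t,s\in[0,T].
\end{equation*}

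Next I would fix a countable dense set $D\subset[0,T]$ and, using that $\{u_\varepsilon(t)\}_\varepsilon\subset K$ is weakly relatively compact in $L^2(\Omega)$ for each $t$, extract by a diagonal argument a sequence $\varepsilon_n\to0$ and a map $u:D\to K$ with $u_{\varepsilon_n}(t)\rightharpoonup u(t)$ weakly in $L^1\cap L^2(\Omega)$ for all $t\in D$; testing against the constant $1$ and against nonnegative $L^\infty$ functions shows the weak limit preserves mass and sign, so $u(t)\in\mathcal P(\Omega)$. To transfer the modulus to $W_m$ I exploit the monotone convergence $m_{\varepsilon_n}\downarrow m$: for $t,s\in D$, Lemma~\ref{lem2.7} applied to $u_{\varepsilon_n}(t)\rightharpoonup u(t)$, $u_{\varepsilon_n}(s)\rightharpoonup u(s)$ with the decreasing family $m_{\varepsilon_n}$ gives
\begin{equation*}
   W_m(u(t),u(s))\le\liminf_{n\to\infty}W_{m_{\varepsilon_n}}(u_{\varepsilon_n}(t),u_{\varepsilon_n}(s))\le C_4\sqrt{|t-s|}.
\end{equation*}
Thus $u$ is $\tfrac12$-H\"older from $D$ into $(\mathcal P(\Omega)\cap L^2(\Omega),W_m)$. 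Since bounded sets are weakly* relatively compact (Proposition~\ref{prop2.5}) and $W_m(\mu,\nu)=0$ forces $\mu=\nu$ (test the continuity equation by a fixed $\psi\in C_c^1(\Rd)$ and bound the action by Cauchy--Schwarz, using $\int_\Omega\rho^\alpha\,dx\le|\Omega|^{1-\alpha}$ for $\rho\in\mathcal P(\Omega)$), this extends uniquely and continuously to $u:[0,T]\to\mathcal P(\Omega)\cap L^2(\Omega)$ with the same H\"older bound.

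Finally I would upgrade the convergence to every $t\in[0,T]$. Fix $t\notin D$ and let $w$ be any weak-$L^2$ cluster point of $\{u_{\varepsilon_n}(t)\}$; choosing $t_j\in D$ with $t_j\to t$ and applying Lemma~\ref{lem2.7} once more along the cluster subsequence (to $u_{\varepsilon_{n_k}}(t)\rightharpoonup w$ and $u_{\varepsilon_{n_k}}(t_j)\rightharpoonup u(t_j)$) gives $W_m(w,u(t_j))\le C_4\sqrt{|t-t_j|}$, whence $W_m(w,u(t))\le 2C_4\sqrt{|t-t_j|}\to0$ by the triangle inequality and the H\"older bound on $u$; since $W_m$ separates points, $w=u(t)$. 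As every cluster point coincides with $u(t)$ and the sequence lies in the metrizable weakly compact set $K$, the whole sequence converges, $u_{\varepsilon_n}(t)\rightharpoonup u(t)$ weakly in $L^1\cap L^2(\Omega)$. The estimate $W_m(u(t),u(s))\le C_4\sqrt{|t-s|}$ on all of $[0,T]$ then follows by a last application of Lemma~\ref{lem2.7}. The step I expect to be most delicate is exactly the passage from the $\varepsilon$-dependent modulus to the fixed limit metric; everything else is a diagonal extraction together with routine lower-semicontinuity bookkeeping.
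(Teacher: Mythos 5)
Your proposal is correct and takes essentially the same route as the paper's proof: uniform $L^2$ bounds, a diagonal extraction over a countable dense set of times, the combination of the $\varepsilon$-dependent equi-continuity (Lemma~\ref{lem5.3}) with the monotone lower semicontinuity of Lemma~\ref{lem2.7} to transfer the $\frac{1}{2}$-H\"older modulus to $W_m$, extension by density, and a cluster-point/separation argument to get convergence at every $t\in[0,T]$. The only difference is implementational: where the paper obtains the limits along rational times from the completeness of $(S,W_m)$ (deduced from the completeness of $(\mathcal{M}_{loc}^+(\Rd),W_m)$ in \cite{DNS}), you construct them via weak* compactness (Proposition~\ref{prop2.5}) together with the point-separation property of $W_m$, which you justify explicitly while the paper uses it implicitly.
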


\begin{proof}
   First, $(\mathcal{M}_{loc}^+(\Rd),W_m)$ is complete (\cite[Theorem 5.7]{DNS})
   and $m_\varepsilon(r)$ is decreasing with respect to $\varepsilon$
   and pointwise converging to $m(r)$ as $\varepsilon \to 0$.
   Set 
   $$S \coloneq \{f \in L^{p+1-\alpha}\cap\mathcal{P}(\Omega); \|f\|_{L^{p+1-\alpha}(\Omega)}^{p+1-\alpha} \leq C_1\},$$
   where $C_1$ is the constant in Lemma \ref{lem5.2}.
   Then $S$ is sequentially compact with respect to the weak topology of $L^1\cap L^{p+1-\alpha}(\Omega)$.
   Indeed, 
   let $\{f_n\} \subset S$, we can easily see that
   $\{f_n\}$ is bounded in $L^1\cap L^{p+1-\alpha}(\Omega)$ and equi-integrable.
   Hence, taking a subsequence (not relabeled), there exists a function $f \in L^1\cap L^{p+1-\alpha}(\Omega)$ such that
   $$f_n \rightharpoonup f \quad \mathrm{weakly\ in}\ L^1\cap L^{p+1-\alpha}(\Omega)\ \mathrm{as}\ n \to \infty.$$
   Note that if $f_n$ converges to $f$ weakly in $L^1\cap L^{p+1-\alpha}(\Omega)$
   then $f_n$ converges to $f$ weakly* in $\mathcal{M}_{loc}^+(\Rd)$.
   Since $f_n$ weakly converges to $f$ in $L^{p+1-\alpha}(\Omega)$ as $n\to\infty$,
   we have $\|f\|_{L^{p+1-\alpha}(\Omega)}^{p+1-\alpha} \leq C_1$, then $f\in S$.
   
   Since $u_\varepsilon(t) \in L^{p+1-\alpha}\cap \mathcal{P}(\Omega)$ and $\|u_\varepsilon(t)\|_{L^{p+1-\alpha}(\Omega)}^{p+1-\alpha} \leq C_1$
   for all $t\in[0,T]$ by Lemma \ref{lem5.2} and Lemma \ref{lem5.4}, that is,
   $\{u_\varepsilon(t)\}_\varepsilon \subset S$ for all $t \in [0,T]$,
   using the diagonal argument,
   there exist a subsequence $\{u_{\varepsilon_n}\}_n$ 
   and $u : \mathbb{Q}\cap[0,T] \to S$ such that
   $$u_{\varepsilon_n}(t) \rightharpoonup u(t)\quad 
   \mathrm{weakly\ in}\ L^1\cap L^{p+1-\alpha}(\Omega)\ \mathrm{as}\ n\to\infty\  
   \mathrm{for}\ t \in \mathbb{Q}\cap[0,T].$$
   By Lemma \ref{lem2.7} and Lemma \ref{lem5.3}, we have
   \begin{align*}
      W_m(u(t), u(s)) 
      &\leq \liminf_{\varepsilon_n\to0}W_{\varepsilon_n}(u_{\varepsilon_n}(t), u_{\varepsilon_n}(s))
       \leq \limsup_{\varepsilon_n\to0}W_{\varepsilon_n}(u_{\varepsilon_n}(t), u_{\varepsilon_n}(s))\\
      &\leq \limsup_{\varepsilon_n\to0}\liminf_{\tau\to0}W_{\varepsilon_n}(u_{\tau}(t), u_{\tau}(s))\\
      &\leq \limsup_{\varepsilon_n\to0}\limsup_{\tau\to0}W_{\varepsilon_n}(u_{\tau}(t), u_{\tau}(s))\\
      &\leq C_6\sqrt{|t-s|}.
   \end{align*}
   We will show that $(S,W_m)$ is complete.
   Let $\{f_n\} \subset S$ be a Cauchy sequence, 
   since $\{f_n\} \subset L_{loc}^1(\Rd) \subset \mathcal{M}_{loc}^+(\Rd)$ 
   and $(\mathcal{M}_{loc}^+(\Rd),W_m)$ is complete, 
   there exists a Radon measure $f \in \mathcal{M}_{loc}^+(\Rd)$ such that $f_n \to f$ in $W_m$.
   In particular, $f_n \rightharpoonup f$ weakly* in $\mathcal{M}_{loc}^+(\Rd)$ (Proposition \ref{prop2.5}).
   Note that since $\{f_n\} \subset L_{loc}^1(\Rd)$ and $\{f_n\}$ is bounded in $L^{p+1-\alpha}(\Omega)$,
   we can identify the measure $f \in \mathcal{M}_{loc}^+(\Rd)$ with the density function $f \in L^1_{loc}(\Rd)$.

   On the other hand, since $\{f_n\} \subset S$,
   there exist a subsequence $\{f_{n_k}\}_k$ and $g \in S$ such that
   $f_{n_k} \rightharpoonup g$ weakly in $L^1\cap L^{p+1-\alpha}(\Omega)$.
   For all $\zeta \in C_c^\infty(\Omega)$, we have
   $$\int_{\Omega} f_{n_k}\zeta\, dx \to \int_{\Omega} f\zeta\, dx\quad 
   \mathrm{as}\ k \to \infty$$
   and
   $$\int_{\Omega} f_{n_k}\zeta\, dx \to \int_{\Omega} g\zeta\, dx\quad 
   \mathrm{as}\ k \to \infty.$$
   Hence we obtain $f = g$ a.e. in $\Omega$.
   Thus it holds that $f \in S$ and 
   $$W_m(f_n, f) \leq W_m(f_n, f_{n_k}) + W_m(f_{n_k}, f) \to 0 \quad 
   \mathrm{as}\ n,k \to \infty.$$
   Let $t \in [0,T]$, 
   then there exists $\{t_k\} \subset \mathbb{Q}\cap[0,T]$ such that
   $t_k \to t$ as $k \to \infty$.
   Since $W_m(u(t_k), u(t_l)) \leq C_6\sqrt{|t_k - t_l|} \to 0$
   as $k,l \to \infty$ and $\{u(t_k)\} \subset S$,
   we can uniquely define
   $$u(t) \coloneq \lim_{k\to\infty}u(t_k)\quad \mathrm{in}\ (S,W_m).$$
   Hence we obtain $u : [0,T] \to S \subset \mathcal{P}(\Omega)$.

   Finally, we show
   $u_{\varepsilon_n}(t) \rightharpoonup u(t)$ weakly in $L^1\cap L^{p+1-\alpha}(\Omega)$ for $t \in [0,T]$.
   It is sufficient to prove that
   all subsequences of $\{u_{\varepsilon_n}(t)\}$ have 
   a subsequence converging to $u(t)$ weakly in $L^1\cap L^{p+1-\alpha}(\Omega)$.
   Fix $t \in [0,T]$ 
   and let $\{u_{\varepsilon_n^\prime}(t)\} \subset \{u_{\varepsilon_n}(t)\}$.
   Since $S$ is sequentially compact, taking a subsequence (not relabeled),
   we have 
   $$u_{\varepsilon_n^\prime}(t) \rightharpoonup \tilde{u} \quad 
   \mathrm{weakly\ in}\ L^1\cap L^{p+1-\alpha}(\Omega)\ \mathrm{as}\ \varepsilon_n^\prime\to0$$
   for some $\tilde{u} \in S$.
   For all $s \in \mathbb{Q}\cap[0,T]$, we obtain
   \begin{align*}
      W_m(\tilde{u}, u(t)) 
      &\leq W_m(\tilde{u}, u(s)) + W_m(u(s), u(t))\\
      &\leq \liminf_{\varepsilon_n^\prime\to0}W_{m_{\varepsilon_n^\prime}}(u_{\varepsilon_n^\prime}(t), u_{\varepsilon_n^\prime}(s)) + W_m(u(s), u(t))\\
      &\leq C_6\sqrt{|t-s|} + W_m(u(s), u(t)).
   \end{align*}
   Letting $s \to t$, we have 
   $W_m(\tilde{u}, u(t)) \leq 0$
   and since $\tilde{u}, u(t) \in S$, we see $\tilde{u} = u(t)$ in $S$.
\end{proof}

\begin{rem}\label{rem5.9}
   Let $T>0$.
   Since the estimates in Lemma \ref{lem5.2} are independent of $\varepsilon$, 
   by the same arguments for $\tau$, we can easily see 
   \begin{align*}
      &u_{\varepsilon_n} \rightharpoonup u\quad \mathrm{weakly\ in}\ L^2((0,T);W^{1,p+1-\alpha}(\Omega))\ \mathrm{as}\ n \to \infty,\\
      &u_{\varepsilon_n} \to u\quad \mathrm{strongly\ in}\ L^2((0,T);L^{p+1-\alpha}(\Omega))\ \mathrm{as}\ n \to \infty,\\
      &u_{\varepsilon_n}(x,t) \to u(x,t)\quad \mathrm{a.e.\ in}\ (x,t) \in \Omega\times(0,T)\ \mathrm{as}\ n\to\infty,\\
      &\nabla (u_{\varepsilon_n})^p \rightharpoonup \nabla u^p\quad \mathrm{weakly\ in}\ L^{\frac{p+1-\alpha}{p}}(\Omega\times(0,T))\ \mathrm{as}\ n\to\infty,\\
      &v_{\varepsilon_n} \rightharpoonup v\quad \mathrm{weakly\ in}\ L^2((0,T);H^2(\Omega))\ \mathrm{as}\ n\to\infty,\\
      &v_{\varepsilon_n}(t) \rightharpoonup v(t)\quad \mathrm{weakly\ in}\ H^1(\Omega)\ \mathrm{as}\ n\to\infty\ \mathrm{for}\ t\in [0,T],\\
      &v \in C^{\frac{1}{2}}([0,T];L^2(\Omega)).
   \end{align*}
\end{rem}

\quad Set $Q \coloneq \{(x,t) \in \Omega\times(0,T);u(x,t) = 0\}$.
Then the following lemma implies that $\nabla (u_\varepsilon)^p \to 0$ 
in $L^{\frac{p+1-\alpha}{p}}(Q)$
as $\varepsilon\to0$.
This idea is inspired by \cite[Lemma 5.6]{LMS}.

\begin{lem}\label{lem6.5}
   Let $T>0$,
   $1+\alpha-2/d\leq p\leq 1+\alpha$ and 
   assume that $\chi>0$ is small enough if $p=1+\alpha-2/d$.
   Then 
   $\|\nabla (u_{\varepsilon_n})^p\|_{L^{\frac{p+1-\alpha}{p}}(Q)} \to 0$ as $n\to\infty$.
\end{lem}

\begin{proof}
   By the same argument of the proof of Lemma \ref{lem5.6}, we have
   \begin{align*}
      \int_{Q} |\nabla (u_{\varepsilon_n})^p|^{\frac{p+1-\alpha}{p}}\, dx\, dt
      \leq \left(\frac{2p}{p+1-\alpha}\right)^{\frac{p+1-\alpha}{p}}(C_2(1+T))^{\frac{p+1-\alpha}{2p}}
      \left(\int_{Q} (u_{\varepsilon_n})^{p+1-\alpha}\, dx\, dt\right)^{\frac{p+\alpha-1}{2p}}.
   \end{align*}
   Since $u_{\varepsilon_n} \to u$ strongly in $L^2((0,T);L^{p+1-\alpha}(\Omega))$ as $n\to\infty$,
   in particular $u_{\varepsilon_n} \to u$ strongly in $L^{p+1-\alpha}(Q)$ as $n\to\infty$,
   it follows
   \begin{align*}
      &\limsup_{n\to\infty}\int_{Q} |\nabla (u_{\varepsilon_n})^p|^{\frac{p+1-\alpha}{p}}\, dx\, dt\\
      &\leq \left(\frac{2p}{p+1-\alpha}\right)^{\frac{p+1-\alpha}{p}}(C_2(1+T))^{\frac{p+1-\alpha}{2p}}
      \lim_{n\to\infty}\left(\int_{Q} (u_{\varepsilon_n})^{p+1-\alpha}\, dx\, dt\right)^{\frac{p+\alpha-1}{2p}}\\
      &= \left(\frac{2p}{p+1-\alpha}\right)^{\frac{p+1-\alpha}{p}}(C_2(1+T))^{\frac{p+1-\alpha}{2p}}
      \left(\int_{Q} u^{p+1-\alpha}\, dx\, dt\right)^{\frac{p+\alpha-1}{2p}} = 0.
   \end{align*}
   The proof is completed.
\end{proof}

Finally we prove Theorem \ref{thm1.1} and Theorem \ref{thm1.4}.

\begin{proof}[Proof of Theorem \ref{thm1.1} and Theorem \ref{thm1.4}]
   Let $T>0, \varphi \in C^\infty(\overline{\Omega})$ with $\nabla\varphi\cdot\boldsymbol{n} = 0$ on $\partial\Omega$
   and $\zeta \in H^1(\Omega)$.
   Note that \eqref{eq61} and \eqref{eq62},
   then by Lemma \ref{lem5.8} and Remark \ref{rem5.9}, we have
   \begin{align*}
      &\bullet u \in L^\infty((0,T);L^{p+1-\alpha}(\Omega)),\ u^{\frac{p+1-\alpha}{2}} \in L^2((0,T);H^1(\Omega)),\\
      &\bullet \|u(t)\|_{L^1(\Omega)} = 1\quad \mathrm{for}\ t \in [0,T],\\
      &\bullet v \in L^\infty((0,T);H^1(\Omega))\cap L^2((0,T);H^2(\Omega))\cap C^{\frac{1}{2}}([0,T];L^2(\Omega)),\\
      &\bullet \lim_{t\to0}W_m(u(t),u_0) = 0\ \mathrm{and}\ \lim_{t\to0}\|v(t) - v_0\|_{L^2(\Omega)} = 0.
   \end{align*}
   Then, we infer from Lemma \ref{lem6.2} and Remark \ref{rem5.9} that
   \begin{align*}
      \int_0^T\int_{\Omega} (\nabla v\cdot \nabla\zeta + v\zeta - u\zeta)\, dx\, dt 
      = \int_{\Omega} (v_0 - v(\cdot,T))\zeta\, dx.
   \end{align*}
   By \eqref{eq12}, we have
   \begin{align*}
      \int_{\Omega} (u_0(x) - u_{\varepsilon_n}(x,T))\varphi(x)\, dx
      &= - \int_0^T\int_{\Omega} \frac{\alpha}{p-\alpha}\left(\frac{u_{\varepsilon_n}(x,t)}{u_{\varepsilon_n}(x,t) + \varepsilon_n}\right)^{1-\alpha} \nabla u_{\varepsilon_n}(x,t)^p\cdot\nabla\varphi(x)\, dx\, dt\notag\\
      &\quad - \int_0^T\int_{\Omega} \frac{p}{p-\alpha}u_{\varepsilon_n}(x,t)^{p-\alpha} m_{\varepsilon_n}(u_{\varepsilon_n}(x,t))\Delta\varphi(x)\, dx\, dt\notag\\
      &\quad - \int_0^T\int_{\Omega} \chi m_\varepsilon(u_{\varepsilon_n}(x,t)) \nabla v_{\varepsilon_n}(x,t)\cdot\nabla\varphi(x)\, dx\, dt.
   \end{align*}
   By the convergences in Remark \ref{rem5.9} and the same argument in Lemma \ref{lem5.7}
   we immediately obtain
   \begin{align*}
      &\int_{\Omega} u_{\varepsilon_n}(x,T)\varphi(x)\, dx \to \int_{\Omega} u(x,T)\varphi(x)\, dx
      \quad \mathrm{as}\ n\to\infty,\\
      &\int_0^T\int_{\Omega} (u_{\varepsilon_n})^{p-\alpha}m_{\varepsilon_n}(u_{\varepsilon_n})\Delta\varphi\, dx\, dt
      \to \int_0^T\int_{\Omega} u^{p-\alpha}u^\alpha\Delta\varphi\, dx\, dt
      \quad \mathrm{as}\ n\to\infty,\\
      &\int_0^T\int_{\Omega} m_{\varepsilon_n}(u_{\varepsilon_n})\nabla v_{\varepsilon_n}\cdot\nabla\varphi\, dx\, dt
      \to \int_0^T\int_{\Omega} u^\alpha\nabla v\cdot\nabla\varphi\, dx\, dt
      \quad \mathrm{as}\ n\to\infty.
   \end{align*}
   We will show
   \begin{align*}
      \int_0^T\int_{\Omega} \left(\frac{u_{\varepsilon_n}}{u_{\varepsilon_n}+\varepsilon_n}\right)^{1-\alpha}\nabla (u_{\varepsilon_n})^p\cdot\nabla\varphi\, dx\, dt
      \to \int_0^T\int_{\Omega} \nabla u^p\cdot\nabla\varphi\, dx\, dt
      \quad \mathrm{as}\ n\to\infty.
   \end{align*}
   Indeed, we have
   \begin{align*}
      &\left|\int_0^T\int_{\Omega} \left(\frac{u_{\varepsilon_n}}{u_{\varepsilon_n}+\varepsilon_n}\right)^{1-\alpha}\nabla (u_{\varepsilon_n})^p\cdot\nabla\varphi\, dx\, dt
      - \int_0^T\int_{\Omega} \nabla u^p\cdot\nabla\varphi\, dx\, dt\right|\\
      &\leq \int_0^T\int_{\Omega} \left|\left(\frac{u_{\varepsilon_n}}{u_{\varepsilon_n}+\varepsilon_n}\right)^{1-\alpha} - 1\right||\nabla (u_{\varepsilon_n})^p||\nabla\varphi|\, dx\, dt
      + \left|\int_0^T\int_{\Omega} (\nabla (u_{\varepsilon_n})^p - \nabla u^p)\cdot\nabla\varphi\, dx\, dt\right|\\
      &\eqcolon \mathrm{I}_1 + \mathrm{I}_2.
   \end{align*}
   By Remark \ref{rem5.9}, it follows $\mathrm{I}_2 \to 0$ as $n\to\infty$.
   On the other hand, by H\"older's inequality and Lemma \ref{lem6.5}, it follows
   \begin{align*}
      \int_{Q} \left|\left(\frac{u_{\varepsilon_n}}{u_{\varepsilon_n}+\varepsilon_n}\right)^{1-\alpha} - 1\right||\nabla (u_{\varepsilon_n})^p||\nabla\varphi|\, dx\, dt
      &\leq 2T^{\frac{1-\alpha}{p+1-\alpha}}\|\nabla\varphi\|_{L^{\frac{p+1-\alpha}{1-\alpha}}(\Omega)}\|\nabla (u_{\varepsilon_n})^p\|_{L^{\frac{p+1-\alpha}{p}}(Q)}\\
      &\to 0\quad \mathrm{as}\ n \to\infty,
   \end{align*}
   where we used
   \begin{align}\label{eq110}
      \left|\left(\frac{u_{\varepsilon_n}}{u_{\varepsilon_n}+\varepsilon_n}\right)^{1-\alpha} - 1\right|
      \leq 2.
   \end{align}
   Moreover by H\"older's inequality and \eqref{eq92}, we obtain
   \begin{align*}
      &\int_{(\Omega\times(0,T))\setminus Q} 
      \left|\left(\frac{u_{\varepsilon_n}}{u_{\varepsilon_n}+\varepsilon_n}\right)^{1-\alpha} - 1\right||\nabla (u_{\varepsilon_n})^p||\nabla\varphi|\, dx\, dt\\
      &\leq \left(\int_{(\Omega\times(0,T))\setminus Q} 
      \left|\left(\frac{u_{\varepsilon_n}}{u_{\varepsilon_n}+\varepsilon_n}\right)^{1-\alpha} - 1\right|^{\frac{p+1-\alpha}{1-\alpha}}\, dx\, dt\right)^{\frac{1-\alpha}{p+1-\alpha}}
      \|\nabla \varphi\|_{L^\infty(\Omega)}\|\nabla(u_{\varepsilon_n})^p\|_{L^{\frac{p+1-\alpha}{p}}(\Omega\times(0,T))}\\
      &\leq \left(\int_{(\Omega\times(0,T))\setminus Q} 
      \left|\left(\frac{u_{\varepsilon_n}}{u_{\varepsilon_n}+\varepsilon_n}\right)^{1-\alpha} - 1\right|^{\frac{p+1-\alpha}{1-\alpha}}\, dx\, dt\right)^{\frac{1-\alpha}{p+1-\alpha}}
      \|\nabla \varphi\|_{L^\infty(\Omega)}C_7(1+T).
   \end{align*}
   Since $u_{\varepsilon_n}(x,t) \to u(x,t) > 0$ a.e. $(x,t) \in (\Omega\times(0,T))\setminus Q$ as $n\to\infty$,
   we have
   \begin{align*}
      &\left|\left(\frac{u_{\varepsilon_n}(x,t)}{u_{\varepsilon_n}(x,t)+\varepsilon_n}\right)^{1-\alpha} - 1\right|
      \leq \left(\frac{\varepsilon_n}{u_{\varepsilon_n}(x,t) + \varepsilon_n}\right)^{1-\alpha}
      \to 0\quad \mathrm{as}\ n\to\infty\\
      &\hspace{9cm} (x,t) \in (\Omega\times(0,T))\setminus Q.
   \end{align*}
   Combining this with \eqref{eq110},
   we infer from Lebesgue's dominated convergence theorem that  
   \begin{align*}
      \int_{(\Omega\times(0,T))\setminus Q} 
      \left|\left(\frac{u_{\varepsilon_n}}{u_{\varepsilon_n}+\varepsilon_n}\right)^{1-\alpha} - 1\right|^{\frac{p+1-\alpha}{1-\alpha}}\, dx\, dt
      \to 0\quad \mathrm{as}\ n\to\infty,
   \end{align*}
   which yields that $\mathrm{I}_1$ converges to $0$ as $n\to\infty$.
   Therefore we conclude that
   \begin{align*}
      &\int_{\Omega} (u_0(x) - u(x,T))\varphi(x)\, dx\\
      &= - \int_0^T\int_{\Omega} \frac{\alpha}{p-\alpha}\nabla u(x,t)^p\cdot\nabla\varphi(x)\, dx\, dt
       - \int_0^T\int_{\Omega} \frac{p}{p-\alpha}u(x,t)^p\Delta\varphi(x)\, dx\, dt\\
      &\quad - \int_0^T\int_{\Omega} \chi u(x,t)^\alpha \nabla v(x,t)\cdot\nabla\varphi(x)\, dx\, dt\\
      &= \int_0^T\int_{\Omega} \nabla u(x,t)^p\cdot\nabla\varphi\, dx\, dt
       - \int_0^T\int_{\Omega} \chi u(x,t)^\alpha \nabla v(x,t)\cdot\nabla\varphi(x)\, dx\, dt.
   \end{align*}
   The proof is completed.
\end{proof}

\appendix
\section{Appendix}

\begin{proof}[Proof of Proposition \ref{prop4.1}]
   We devide the proof into four steps.
   To simplify, we write $\|\cdot\|_{L^q(\Omega)} = \|\cdot\|_q$ for $q\in[1,\infty]$
   and $\|\cdot\|_{W^{l,p+1-\alpha}(\Omega)} = \|\cdot\|_{W^{l,p+1-\alpha}}$ for $l\in \mathbb{N}$.

   {\bf Step 1: Existence of a local soluiton.}
   
   Set $M_0\coloneq \|w_0\|_{W^{1,p+1-\alpha}}$ 
   and 
   $$Y \coloneq \{y \in C([0,T_0];W^{1,p+1-\alpha}(\Omega));
   \|y\|_Y \leq 4M_0\},$$
   where $\|y\|_Y \coloneq \sup_{0\leq t\leq T_0}\|y(t)\|_{W^{1,p+1-\alpha}}$ and
   $T_0 \in (0,\infty)$ will be fixed later. 
   We define a function $w_1$ by $w_1 = e^{\delta t\Delta}w_0$, 
   where $e^{\delta t\Delta}$ is the Neumann heat semigroup, that is, $w_1$ is a solution to 
   \begin{align*}
      \begin{cases}
         \dt w_1 = \delta\Delta w_1\quad &\mathrm{in}\ \Omega\times(0,\infty),\\
         \nabla w_1\cdot \boldsymbol{n} = 0\quad &\mathrm{on}\ \partial\Omega\times(0,\infty),\\
         w_1(0) = w_0\quad &\mathrm{in}\ L^1\cap L^2\cap W^{1,p+1-\alpha}(\Omega).
      \end{cases}
   \end{align*}
   Then $w_1$ is nonnegative and 
   $w_1$ belongs to $C([0,\infty);L^1\cap L^2\cap W^{1,p+1-\alpha}(\Omega))\cap C^\infty(\Omega\times(0,\infty))$.
   We also define a function 
   \begin{align*}
      &\Phi[w](t) 
      \coloneq e^{\delta t\Delta}w_0 
      + \int_0^t e^{\delta(t-s)\Delta}\left(\frac{\alpha\nabla w(s)}{(w_1(s)+\varepsilon)^{1-\alpha}}\cdot\nabla\varphi 
      + (w_1(s)+\varepsilon)^\alpha\Delta\varphi\right)\, ds\\
      &\hspace{10cm} \mathrm{for}\ w \in Y,\ t \in [0,T_0].
   \end{align*}
   Then $\Phi$ belongs to $C([0,T_0];W^{1,p+1-\alpha}(\Omega))$ 
   due to the property of the heat semigroup.

   In this proof, we often use the following estimate:
   \begin{align}\label{eq44}
      \|(y+\varepsilon)^\alpha\Delta\varphi\|_{p+1-\alpha}
      &\leq \|(y^\alpha + \varepsilon^\alpha)\Delta\varphi\|_{p+1-\alpha}
      \leq \|y\|_{p+1-\alpha}^\alpha\|\Delta\varphi\|_{\frac{p+1-\alpha}{1-\alpha}} + \varepsilon^\alpha\|\Delta\varphi\|_{p+1-\alpha}\notag\\
      &\leq C(\|y\|_{p+1-\alpha}^\alpha + 1)\quad \mathrm{for}\ y \in Y\ \mathrm{and}\ y \geq 0\ \mathrm{a.e.},
   \end{align}
   where 
   $C \coloneq \max\{\|\Delta\varphi\|_{\frac{p+1-\alpha}{1-\alpha}}, \varepsilon^\alpha\|\Delta\varphi\|_{p+1-\alpha}\}$ 
   is a constant.

   First, we show that $\Phi$ is a contraction map on $Y$ 
   if $T_0$ is small enough.
   Let $w \in Y$ and $t \in [0,T_0]$ 
   then using $L^p$-$L^q$ estimates and \eqref{eq44}, we have
   \begin{align*}
      \|\Phi[w](t)\|_{p+1-\alpha}
      &\leq \|w_0\|_{p+1-\alpha} 
      + \int_0^t \left\|\frac{\alpha\nabla w(s)}{(w_1(s)+\varepsilon)^{1-\alpha}}\cdot\nabla\varphi + (w_1(s)+\varepsilon)^\alpha\Delta\varphi\right\|_{p+1-\alpha}\, ds\\
      &\leq M_0 
      + \int_0^t \frac{C^\prime\alpha\|\nabla\varphi\|_\infty}{\varepsilon^{1-\alpha}}\|\nabla w(s)\|_{p+1-\alpha} + C(\|w_1(s)\|_{p+1-\alpha}^\alpha + 1)\, ds\\
      &\leq M_0 
      + C_1(M_0 + 1)T_0,
   \end{align*}
   where $C^\prime$ is a constant by $L^p$-$L^q$ estimates and 
   $C_1(\alpha, \varepsilon, \varphi,\Omega)$ is also a constant.
   If $T_0 \leq M_0/C_1(M_0+1)$ 
   then $\sup_{0\leq t \leq T_0}\|\Phi[w](t)\|_{p+1-\alpha} \leq 2M_0$.
   Similarly, it follows
   \begin{align*}
      &\|\nabla\Phi[w](t)\|_{p+1-\alpha}\\
      &\leq \|\nabla w_0\|_{p+1-\alpha} 
      + \int_0^t \frac{\tilde{C}}{(t-s)^{\frac{1}{2}}}\left\|\frac{\alpha\nabla w(s)}{(w_1(s)+\varepsilon)^{1-\alpha}}\cdot\nabla\varphi + (w_1(s)+\varepsilon)^\alpha\Delta\varphi\right\|_{p+1-\alpha}\, ds\\
      &\leq M_0 + C_2(M_0 + 1)T_0^{\frac{1}{2}},
   \end{align*}
   where $\tilde{C}$ is a constant by $L^p$-$L^q$ estimates and $C_2 = C_2(\alpha,\varepsilon,\varphi,\Omega)$ is also a constant.
   If $T_0 \leq M_0^2/C_2^2(M_0+1)^2$ 
   then $\sup_{0\leq t \leq T_0}\|\nabla\Phi[w](t)\|_{p+1-\alpha} \leq 2M_0$.
   Thus we have $\|\Phi[w]\|_Y \leq 4M_0$ and $\Phi[w] \in Y$.

   Let $w,y \in Y$ then we infer from $L^p$-$L^q$ estimates that
   \begin{align*}
      \|\Phi[w](t) - \Phi[y](t)\|_{W^{1,p+1-\alpha}}
      &\leq \int_0^t (C^\prime + \tilde{C}(t-s)^{-\frac{1}{2}})\frac{\alpha\|\nabla\varphi\|_\infty}{\varepsilon^{1-\alpha}}\|\nabla w(s) - \nabla y(s)\|_{p+1-\alpha}\, ds\\
      &\leq C_3(T_0 + T_0^\frac{1}{2})\|w - y\|_Y,
   \end{align*}
   where $C_3 = C_3(\alpha,\varepsilon,\varphi,\Omega)$ is a constant.
   If $T_0 \leq \min\{1/4C_3,1/16C_3^2\}$ 
   then $\|\Phi[w]-\Phi[y]\|_Y \leq 1/2\|w-y\|_Y$. 
   Therefore choosing 
   $$T_0 \leq \min\left\{\frac{M_0}{C_1(M_0+1)}, \frac{M_0^2}{C_2^2(M_0+1)^2}, \frac{1}{4C_3},\frac{1}{16C_3^2}\right\},$$
   we see that $\Phi$ is a contraction map on $Y$.
   By Banach's fixed point theorem, there is a function $w_2 \in Y$ 
   such that $\Phi[w_2]=w_2$.
   In other words, we obtain 
   \begin{equation}
      w_2(t) 
      = e^{\delta t\Delta}v_0 
      + \int_0^t e^{\delta(t-s)\Delta}\left(\frac{\alpha\nabla w_2(s)}{(w_1(s)+\varepsilon)^{1-\alpha}}\cdot\nabla\varphi + (w_1(s)+\varepsilon)^\alpha\Delta\varphi\right)\, ds.
   \end{equation} 

   {\bf Step 2: Regularity and nonnegativity.}

   Set 
   $$f(t) 
   \coloneq \frac{\alpha\nabla w_2(t)}{(w_1(t)+\varepsilon)^{1-\alpha}}\cdot\nabla\varphi 
   + (w_1(t)+\varepsilon)^{\alpha}\Delta\varphi\quad \mathrm{for}\ t \in [0,T_0],$$
   then we have $f \in L^\infty((0,T_0);L^{p+1-\alpha}(\Omega))$  
   because $w_1, w_2 \in C([0,T_0];W^{1,p+1-\alpha}(\Omega))$ and
   $\varphi$ belongs to $C^\infty(\bar{\Omega})$.
   Moreover, setting 
   $$F(t) \coloneq \int_0^t e^{\delta(t-s)\Delta}f(s)\, ds\quad \mathrm{for}\ t \in [0,T_0],$$
   by \cite[Lemma 7.1.1]{Lun}, 
   we have $F \in C^{\frac{1}{2}}([0,T_0];W^{1,p+1-\alpha}(\Omega))$.
   Since $e^{\delta t\Delta}w_0 \in C^\infty(\Omega\times(0,\infty))$, 
   we also obtain $w_2 \in C^{\frac{1}{2}}((0,T_0];W^{1,p+1-\alpha}(\Omega))$.
   Then it holds $f \in C^{\frac{1}{2}}((0,T_0];L^{p+1-\alpha}(\Omega))$.
   Indeed, for $t,s \in (0,T_0]$, we have
   \begin{align*}
      &\|f(t)-f(s)\|_{p+1-\alpha}\\
      &\leq \left\|\frac{\alpha\nabla w_2(t)\cdot\nabla\varphi}{(w_1(t)+\varepsilon)^{1-\alpha}} - \frac{\alpha\nabla w_2(s)\cdot\nabla\varphi}{(w_1(s)+\varepsilon)^{1-\alpha}}\right\|_{p+1-\alpha}
      + \left\|\left[(w_1(t)+\varepsilon)^\alpha - (w_1(s)+\varepsilon)^\alpha\right]\Delta\varphi\right\|_{p+1-\alpha}\\
      &\leq \frac{\alpha(1-\alpha)}{\varepsilon^{2-\alpha}}\|\nabla\varphi\|_\infty\|w_2(t)- w_2(s)\|_{W^{1,p+1-\alpha}}
      +\frac{\alpha}{\varepsilon^{1-\alpha}}\|\Delta\varphi\|_\infty\|w_1(t)-w_1(s)\|_{p+1-\alpha},
   \end{align*}
   where we used the mean value theorem as follows
   \begin{align*}
      &\left|\frac{\nabla w_2(t)}{(w_1(t)+\varepsilon)^{1-\alpha}} - \frac{\nabla w_2(s)}{(w_1(s)+\varepsilon)^{1-\alpha}}\right|
      \leq \frac{1-\alpha}{\varepsilon^{2-\alpha}}\left(|\nabla w_2(t) - \nabla w_2(s)| + |w_1(t) - w_1(s)|\right),\\
      &|(w_1(t)+\varepsilon)^\alpha - (w_1(s)+\varepsilon)^\alpha|
      \leq \frac{\alpha}{\varepsilon^{1-\alpha}}|w_1(t) - w_1(s)|.
   \end{align*}
   Here, since $w_1, w_2 \in C^{\frac{1}{2}}((0,T_0];W^{1,p+1-\alpha}(\Omega))$
   we have $f \in C^{\frac{1}{2}}((0,T_0];L^{p+1-\alpha}(\Omega))$.

   By \cite[Theorem 4.3.4]{Lun}, 
   we obtain $w_2 \in C((0,T_0];W^{2,p+1-\alpha}(\Omega))\cap C^1((0,T_0];L^{p+1-\alpha}(\Omega))$ and $w_2$ satisfies
   \begin{align}\label{pe}
      \begin{dcases}
         \dt w_2 = \delta\Delta w_2 + \frac{\alpha\nabla w_2}{(w_1 + \varepsilon)^{1-\alpha}}\cdot\nabla\varphi + (w_1 + \varepsilon)^\alpha\Delta\varphi 
         &\mathrm{a.e.\ in}\ \Omega\times(0,T_0),\\
         (\nabla w_2\cdot\boldsymbol{n})|_{\partial\Omega} = 0 &\mathrm{for}\ t \in (0,T_0],\\
         w_2(0) = w_0 &\mathrm{in}\ W^{1,p+1-\alpha}(\Omega),
      \end{dcases}
   \end{align}
   where $(\nabla w_2\cdot\boldsymbol{n})|_{\partial\Omega}$ is defined by the trace operator on $W^{2,p+1-\alpha}(\Omega)$
   and the Neumann boundary condition is satisfies because of the definition of $w_2$ and 
   the property of the Neumann heat semigroup.

   We will show $w_2 \geq 0$ a.e. in $\Omega\times[0,T_0]$.
   Let $w_2^- \coloneq \min\{w_2, 0\}$ and $t \in (0,T_0]$.
   Then we choose arbitary $a\in(0,t)$ and fix it.
   Multiplying the first equation of \eqref{pe} by $w_2^-$ 
   and integrating in $\Omega$, we have
   \begin{align*}
      \frac{1}{2}\frac{d}{dt}\|w_2^-(t)\|_2^2
      = -\delta\|\nabla w_2^-(t)\|_2^2 
      + \int_{\Omega} \frac{\alpha\nabla w_2^-(t)w_2^-(t)}{(w_1(t)+\varepsilon)^{1-\alpha}}\cdot\nabla\varphi\, dx 
      + \int_{\Omega} (w_1(t)+\varepsilon)^\alpha w_2^-(t)\Delta\varphi\, dx,
   \end{align*}
   where we used integration by parts and 
   the condition $(\nabla w_2\cdot\boldsymbol{n})|_{\partial\Omega} = 0$ for $t \in (0,T_0]$. 
   Note that thanks to $1+\alpha-2/d \leq p \leq 1+\alpha$ and the Sobolev embedding theorem,
   it holds that $W^{2,p+1-\alpha}(\Omega) \hookrightarrow H^1(\Omega) \hookrightarrow L^{\frac{p+1-\alpha}{p-\alpha}}(\Omega)$,
   thus the right hand side is well-defined.
   Since $w_1 \geq0$, 
   it follows from H\"older's inequality and \eqref{eq44} that
   \begin{align*}
      &\frac{1}{2}\frac{d}{dt}\|w_2^-(t)\|_2^2\\
      &\leq -\delta\|\nabla w_2^-(t)\|_2^2
      + \frac{\alpha\|\nabla\varphi\|_\infty}{\varepsilon^{1-\alpha}}\|\nabla w_2^-(t)\|_2\|w_2^-(t)\|_2
      + \|(w_1(t)+\varepsilon)^\alpha\Delta\varphi\|_2\|w_2^-(t)\|_2\\
      &\leq -\delta\|\nabla w_2^-(t)\|_2^2
      + \frac{\alpha\|\nabla\varphi\|_\infty}{\varepsilon^{1-\alpha}}\|\nabla w_2^-(t)\|_2\|w_2^-(t)\|_2
      + C(\|w_1(t)\|_2^\alpha + 1)\|w_2^-(t)\|_2.
   \end{align*}
   Moreover by $\|w_1(t)\|_2 \leq M_0$ and Young's inequality, we have
   \begin{align*}
      \left(\frac{\alpha\|\nabla\varphi\|_\infty}{\varepsilon^{1-\alpha}}\|\nabla w_2^-(t)\|_2 + C(M_0^\alpha + 1)\right)\|w_2^-(t)\|_2
      &\leq C^\prime(\|\nabla w_2^-(t)\|_2 + 1)\|w_2^-(t)\|_2\\
      &\leq \frac{\theta}{2}(\|\nabla w_2^-(t)\|_2 + 1)^2 
      + \frac{(C^\prime)^2}{2\theta}\|w_2^-(t)\|_2^2\\
      &\leq \theta(\|\nabla w_2^-(t)\|_2^2 + 1) 
      + \frac{(C^\prime)^2}{2\theta}\|w_2^-(t)\|_2^2,
   \end{align*}
   where 
   $$C^\prime \coloneq 
   \max\left\{\frac{\alpha\|\nabla\varphi\|_\infty}{\varepsilon^{1-\alpha}},C(M_0^\alpha + 1)\right\}$$
   and $\theta >0$ satisfies 
   $$\theta < \min_{t\in[a,T_0]}\frac{\delta\|\nabla w_2^-(t)\|_2^2}{\|\nabla w_2^-(t)\|_2^2 + 1}.$$
   Note that if $\|\nabla w_2^-(t)\|_2 = 0$ 
   then $w_2^-(t) = 0$ a.e. in $\Omega$ obviously.
   Hence we have
   $$\frac{1}{2}\frac{d}{dt}\|w_2^-(t)\|_2^2 
   \leq \frac{(C^\prime)^2}{2\theta}\|w_2^-(t)\|_2^2.$$
   By Gronwall's lemma and $w_0 \geq 0$ a.e. in $\Omega\times[0,T_0]$, it follows
   $$\|w_2^-(t)\|_2 = 0\quad \mathrm{for}\ t \in [a,T_0],$$
   then
   $$w_2^-(t) = 0\quad \mathrm{a.e.\ in}\ \Omega,\ \mathrm{for}\ t \in [a,T_0].$$
   Since $a>0$ is arbitary, 
   we see $w_2 \geq 0$ a.e. in $\Omega\times(0,T_0]$, 
   that is, $w_2 \geq 0$ a.e. in $\Omega\times[0,T_0]$.

   {\bf Step 3: Convergences and properties.}

   Repeating the above process, 
   we can construct a sequence $\{w_k\}$ such that
   $$w_k \in Y\cap C((0,T_0];W^{2,p+1-\alpha}(\Omega))\cap C^1((0,T_0];L^{p+1-\alpha}(\Omega)),$$
   $$w_k \geq 0\ \mathrm{a.e.\ in}\ \Omega\times[0,T_0].$$
   We will show that $\{w_k\}_{k\geq2}$ is a Cauchy sequence in $C([0,T_0];W^{1,p+1-\alpha}(\Omega))$ 
   if $T_0$ is small enough.
   First, for $t \in [0,T_0]$ by $L^p$-$L^q$ estimates and the mean value theorem,
   we have
   \begin{align*}
      &\|w_{k+1}(t)-w_k(t)\|_{p+1-\alpha}\\
      &\leq \int_0^t \left\|e^{\delta(t-s)\Delta}
      \left\{\alpha\left(\frac{\nabla w_{k+1}(s)}{(w_k(s)+\varepsilon)^{1-\alpha}} - \frac{\nabla w_k(s)}{(w_{k-1}(s)+\varepsilon)^{1-\alpha}}\right)\cdot\nabla\varphi\right\}\right\|_{p+1-\alpha}\, ds\\
      &\quad + \int_0^t \left\|e^{\delta(t-s)\Delta}\left\{((w_k(s)+\varepsilon)^\alpha - (w_{k-1}(s)+\varepsilon)^\alpha)\Delta\varphi\right\}\right\|_{p+1-\alpha}\, ds\\
      &\leq \int_0^t C^\prime\alpha\|\nabla\varphi\|_\infty\left\|\frac{\nabla w_{k+1}(s)}{(w_k(s)+\varepsilon)^{1-\alpha}} - \frac{\nabla w_k(s)}{(w_{k-1}(s)+\varepsilon)^{1-\alpha}}\right\|_{p+1-\alpha}\, ds\\
      &\quad + \int_0^t C^\prime\|\Delta\varphi\|_\infty\|(w_k(s)+\varepsilon)^\alpha - (w_{k-1}(s)+\varepsilon)^\alpha\|_{p+1-\alpha}\, ds\\
      &\leq \int_0^t C_4\left(\|\nabla w_{k+1}(s) - \nabla w_k(s)\|_{p+1-\alpha} + \|w_k(s) - w_{k-1}(s)\|_{p+1-\alpha}\right)\, ds\\
      &\leq C_4T_0\left(\sup_{0\leq s\leq T_0}\|w_{k+1}(s) - w_k(s)\|_{W^{1,p+1-\alpha}} + \sup_{0\leq s\leq T_0}\|w_k(s) - w_{k-1}(s)\|_{W^{1,p+1-\alpha}}\right),
   \end{align*}
   where $C_4 = C_4(\alpha,\varepsilon,\varphi,\Omega)$ is a constant.
   Similarly, it follows
   \begin{align*}
      &\|\nabla w_{k+1}(t) - \nabla w_k(t)\|_{p+1-\alpha}\\
      &\leq \int_0^t \left\|\nabla e^{\delta(t-s)\Delta}\left\{\alpha\left(\frac{\nabla w_{k+1}(s)}{(w_k(s)+\varepsilon)^{1-\alpha}} - \frac{\nabla w_k(s)}{(w_{k-1}(s)+\varepsilon)^{1-\alpha}}\right)\nabla\varphi\right\}\right\|_{p+1-\alpha}\, ds\\
      &\quad + \int_0^t \left\|\nabla e^{\delta(t-s)\Delta}\left\{((w_k(s)+\varepsilon)^\alpha - (w_{k-1}(s)+\varepsilon)^\alpha)\Delta\varphi\right\}\right\|_{p+1-\alpha}\, ds\\
      &\leq C_5T_0^{\frac{1}{2}}\left(\sup_{0\leq s\leq T_0}\|w_{k+1}(s) - w_k(s)\|_{W^{1,p+1-\alpha}} + \sup_{0\leq s\leq T_0}\|w_k(s) - w_{k-1}(s)\|_{W^{1,p+1-\alpha}}\right),
   \end{align*}
   where $C_5 = C_5(\alpha,\varepsilon,\varphi,\Omega)$ is a constant.
   Hence, choosing $T_0$ such that
   $$T_0 < \min\left\{\frac{1}{8C_4}, \frac{1}{64C_5^2}\right\},$$
   we have
   \begin{equation*}
      \sup_{0\leq t\leq T_0}\|w_{k+1}(t) - w_k(t)\|_{W^{1,p+1-\alpha}} 
      \leq \frac{1}{2}\sup_{0\leq t\leq T_0}\|w_k(t) - w_{k-1}(t)\|_{W^{1,p+1-\alpha}}.
   \end{equation*}
   Thus, for $m, n \in \mathbb{N}$ with $n>m$, it follows
   \begin{align*}
      \sup_{0\leq t\leq T_0}\|w_n(t) - w_m(t)\|_{W^{1,p+1-\alpha}} 
      &\leq \sum_{k=m}^{n-1} \sup_{0\leq t\leq T_0}\|w_{k+1}(t) - w_k(t)\|_{W^{1,p+1-\alpha}}\\
      &\leq \sum_{k=m}^{n-1} \left(\frac{1}{2}\right)^k \left(\sup_{0\leq t\leq T_0}\|w_1(t)\|_{W^{1,p+1-\alpha}}+\|w_0\|_{W^{1,p+1-\alpha}}\right)\\
      &\leq \left(\frac{1}{2}\right)^{m-1} 5M_0\ \rightarrow 0\quad \mathrm{as}\ n,m \rightarrow \infty.
   \end{align*}
   Since $(C([0,T_0];W^{1,p+1-\alpha}(\Omega)),\sup_{0\leq t\leq T_0}\|\cdot\|_{W^{1,p+1-\alpha}})$ is complete, 
   there exists a function $w \in C([0,T_0];W^{1,p+1-\alpha}(\Omega))$ 
   such that $w_k \rightarrow w$ in $C([0,T_0];W^{1,p+1-\alpha}(\Omega))$.
   In addition, we see $w \geq 0$ a.e. in $\Omega\times[0,T_0]$.
   Indeed, for all $\psi \in C_c^\infty(\Omega\times[0,T_0])$ with $\psi\geq0$, 
   we have
   \begin{align*}
      &\left|\int_0^{T_0}\int_{\Omega} w_k(x,t)\psi(x,t)\, dx\, dt - \int_0^{T_0}\int_{\Omega} w(x,t)\psi(x,t)\, dx\, dt\right|\\
      &\leq \int_0^{T_0} \|w_k(t) - w(t)\|_{p+1-\alpha}\|\psi(t)\|_{\frac{p+1-\alpha}{p-\alpha}}\, dt\\
      &\leq \int_0^{T_0} \|\psi(t)\|_{\frac{p+1-\alpha}{p-\alpha}}\, dt\sup_{0\leq t\leq T_0}\|w_k(t) - w(t)\|_{p+1-\alpha}
      \to 0\quad \mathrm{as}\ k\to\infty.
   \end{align*}
   Hence we obtain 
   \begin{align*}
      &0\leq \int_0^{T_0}\int_{\Omega} w_k(x,t)\psi(x,t)\, dx\, dt \to 
      \int_0^{T_0}\int_{\Omega} w(x,t)\psi(x,t)\, dx\, dt\quad \mathrm{as}\ k\to\infty\\
      &\hspace{7cm} \forall \psi \in C_c^\infty(\Rd\times[0,T_0])\ \mathrm{with}\ \psi \geq0,
   \end{align*}
   then
   $$w(x,t) \geq 0\ \mathrm{a.e.\ in}\ \Omega\times[0,T_0].$$

   Next we will prove that $w$ satisfies
   \begin{align*}
      w(t) 
      &= e^{\delta t\Delta}w_0 
      + \int_0^t e^{\delta(t-s)\Delta}\left(\frac{\alpha\nabla w(s)}{(w(s)+\varepsilon)^{1-\alpha}}\nabla\varphi 
      + (w(s)+\varepsilon)^\alpha\Delta\varphi\right)\, ds\\
      &= e^{\delta t\Delta}w_0 
      + \int_0^t e^{\delta(t-s)\Delta}\{\nabla\cdot((w(s)+\varepsilon)^\alpha\nabla\varphi)\}\, ds.
   \end{align*}
   By setting
   $$W(t) 
   \coloneq e^{\delta t\Delta}w_0 
   + \int_0^t e^{\delta(t-s)\Delta}\left(\frac{\alpha\nabla w(s)}{(w(s)+\varepsilon)^{1-\alpha}}\nabla\varphi 
   + (w(s)+\varepsilon)^\alpha\Delta\varphi\right)\, ds
   \quad t \in [0,T_0],$$
   it follows from $L^p$-$L^q$ estimates and the mean value theorem that
   \begin{align*}
      &\|W(t) - w_k(t)\|_{W^{1,p+1-\alpha}}\\
      &\leq \int_0^t (C^\prime + \tilde{C}(t-s)^{-\frac{1}{2}})
      \left\|\frac{\nabla w(s)}{(w(s)+\varepsilon)^{1-\alpha}} - \frac{\nabla w_k(s)}{(w_{k-1}(s)+\varepsilon)^{1-\alpha}}\right\|_{p+1-\alpha}\, ds\\
      &\quad + \int_0^t (C^\prime + \tilde{C}(t-s)^{-\frac{1}{2}})\|w(s) - w_{k-1}(s)\|_{p+1-\alpha}\, ds\\
      &\leq C_6(T_0 + T_0^{\frac{1}{2}})
      \left(\sup_{0\leq s\leq T_0}\|\nabla w(s) - \nabla w_k(s)\|_{p+1-\alpha} + \sup_{0\leq s\leq T_0}\|w(s) - w_{k-1}(s)\|_{p+1-\alpha}\right),
   \end{align*}
   where $C_6 = C_6(\alpha,\varepsilon,\varphi,\Omega)$ is a constant.
   Hence we have $w_k \rightarrow W$ in $C([0,T_0];W^{1,p+1-\alpha}(\Omega))$.
   Due to the uniquness of limit, it follows $w = W$ in $C([0,T_0];W^{1,p+1-\alpha}(\Omega))$.
   Moreover, we see $w \in C([0,T_0];L^1\cap L^2(\Omega))$.
   Indeed, for $t,s \in [0,T_0]$, 
   by H\"older's inequality, we have
   \begin{align*}
      \|w(t) - w(s)\|_1
      \leq |\Omega|^{\frac{p+1-\alpha}{p-\alpha}}\|w(t) - w(s)\|_{p+1-\alpha}.
   \end{align*}
   On the other hand, by the Sobolev embedding theorem, it follows
   \begin{align*}
      \|w(t) - w(s)\|_2
      \leq C_s\|w(t) - w(s)\|_{W^{1,p+1-\alpha}},
   \end{align*}
   where $C_s$ is a constant.
   Since $w \in C([0,T_0];W^{1,p+1-\alpha}(\Omega))$, 
   we obtain $w \in C([0,T_0];L^1\cap L^2(\Omega))$.
   Adapting the same argument for $w_2$ in Step 2, 
   we have $w \in C((0,T_0];W^{2,p+1-\alpha}(\Omega))\cap C^1((0,T_0];L^{p+1-\alpha}(\Omega))$ and $w$ satisfies
   \begin{align}\label{pev}
      \begin{cases}
         \dt w = \delta\Delta w + \nabla\cdot((w+\varepsilon)^\alpha\nabla\varphi) &\mathrm{a.e.\ in}\ \Omega\times(0,T_0),\\
         (\nabla w\cdot\boldsymbol{n})|_{\partial\Omega} = 0 &\mathrm{for}\ t \in (0,T_0],\\
         w(0) = w_0 &\mathrm{in}\ L^1\cap L^2\cap W^{1,p+1-\alpha}(\Omega).
      \end{cases}
   \end{align}
   Now, we will show $\|w(t)\|_1 = \|w_0\|$ for $t \in [0,T_0]$.
   Let $t \in [0,T_0]$. 
   Integrating the first equation of \eqref{pev} in $\Omega$,
   we have
   \begin{align*}
      \frac{d}{dt}\int_{\Omega} w(t)\, dx 
      &= \delta\int_{\Omega} \Delta w(t)\, dx 
      + \int_{\Omega} \nabla\cdot((w(t)+\varepsilon)^\alpha\nabla\varphi)\, dx.
   \end{align*}
   Since $\nabla w\cdot\boldsymbol{n} = 0$ and $\nabla\varphi\cdot\boldsymbol{n} = 0$ on $\partial\Omega\times(0,T_0]$,
   we infer from integration by parts that
   \begin{align*}
      \frac{d}{dt}\int_{\Omega} w(t)\, dx 
      &= \delta\int_{\partial\Omega} \nabla w(t)\cdot\boldsymbol{n}\, dS 
      + \int_{\Omega} (w(t)+\varepsilon)^\alpha\nabla\varphi\cdot\boldsymbol{n}\, dS
      =0.
   \end{align*}
   By integrating over $[0,t]$, it follows
   \begin{align*}
      \int_{\Omega} w(t)\, dx = \int_{\Omega} w(0)\, dx = \int_{\Omega} w_0\, dx.
   \end{align*}

   {\bf Step 4: Uniqueness.}

   Let $y_1, y_2 \in C([0,T_0];W^{1,p+1-\alpha}(\Omega))\cap C((0,T_0];W^{2,p+1-\alpha}(\Omega))\cap C^1((0,T_0];L^{p+1-\alpha}(\Omega))$ be solutions to \eqref{pev}.
   Then by \cite[Proposition 4.1.2]{Lun}, they are mild solutions to \eqref{pev}:
   \begin{align*}
      &y_1(t) 
      = e^{\delta t\Delta}w_0 
      + \int_0^t e^{\delta(t-s)\Delta}\{\nabla\cdot((y_1(s) + \varepsilon)^\alpha\nabla\varphi)\}\, ds,\\
      &y_2(t)
      = e^{\delta t\Delta}w_0 
      + \int_0^t e^{\delta(t-s)\Delta}\{\nabla\cdot((y_2(s) + \varepsilon)^\alpha\nabla\varphi)\}\, ds.
   \end{align*}
   Define
   \begin{equation*}
      t_0 \coloneq \max\{t \in [0,T_0]; y_1(s) = y_2(s)\ \mathrm{for}\ 0\leq s \leq t\},
   \end{equation*}
   and set $y_0 \coloneq y_1(t_0) = y_2(t_0)$.
   If $t_0 < T_0$, the problem
   \begin{equation}
      \dt w(t) = \delta\Delta w(t) + \nabla\cdot((w(t)+\varepsilon)^\alpha\nabla\varphi),\ t>t_0,\
      w(t_0) = y_0,
   \end{equation}
   has a unique mild solution in a set 
   \begin{equation*}
      Y' = \left\{y \in C([t_0,t_0 + a];W^{1,p+1-\alpha}(\Omega));
      \sup_{t_0\leq t\leq t_0+a}\|y(t)\|_{W^{1,p+1-\alpha}} \leq R\right\},
   \end{equation*}
   provided $R$ is large enough and $a$ is small enough.
   Since $y_1$ and $y_2$ are bounded with value in $W^{1,p+1-\alpha}(\Omega)$, 
   there exists $R$ such that $\|y_i(t)\|_{W^{1,p+1-\alpha}} \leq R$ for $t_0\leq t \leq T_0, i=1,2$.
   Thus it follows $y_1 = y_2$ in $Y^\prime$.
   On the other hand, $y_1$ and $y_2$ are two different solutions of \eqref{pev} in $[t_0,t_0+a]$, 
   for every $a \in (0,T_0-t_0]$.
   This is a contradiction.
   Hence $t_0 = T_0$, and the solution of \eqref{pev} is unique in 
   $C([0,T_0];W^{1,p+1-\alpha}(\Omega))\cap C((0,T_0];W^{2,p+1-\alpha}(\Omega))\cap C^1((0,T_0];L^{p+1-\alpha}(\Omega))$.
   The proof is completed.
\end{proof}

\subsection*{Acknowledgements}
\quad The author is greatly indebted to Professor Kentaro Fujie 
for his helpful comments during the preparation of the paper.



\end{document}